\theoremstyle{plain}
\newtheorem*{thm*}{Theorem}
\newtheorem{thm}{Theorem}[section]
\Crefname{thm}{Theorem}{Theorems}
\newtheorem*{lem*}{Lemma}
\newtheorem{lem}[thm]{Lemma}
\Crefname{lem}{Lemma}{Lemmas}
\newtheorem*{claim*}{Claim}
\newtheorem{claim}[thm]{Claim}
\crefname{claim}{Claim}{Claims}
\Crefname{claim}{Claim}{Claims}
\newtheorem{prop}[thm]{Proposition}
\Crefname{prop}{Proposition}{Propositions}
\newtheorem{cor}[thm]{Corollary}
\Crefname{cor}{Corollary}{Corollaries}
\crefname{cor}{Corollary}{Corollaries}
\Crefname{conj}{Conjecture}{Conjectures}
\Crefname{qn}{Question}{Questions}
\Crefname{obs}{Observation}{Observations}
\theoremstyle{definition}
\Crefname{prob}{Problem}{Problems}
\newtheorem*{fact*}{Fact}
\Crefname{fact}{Fact}{Facts}
\newtheorem*{ex*}{Example}
\Crefname{ex}{Example}{Examples}
\theoremstyle{definition}
\newtheorem{defn}[thm]{Definition}
\newtheorem*{defn*}{Definition}
\Crefname{defn}{Definition}{Definitions}
\theoremstyle{remark}
\newtheorem*{rem*}{Remark}
\Crefname{rem}{Remark}{Remarks}
\numberwithin{equation}{section}
\DeclareMathOperator{\Bin}{Bin}
\newcommand{\eps}{\varepsilon}
\newcommand{\sm}{\setminus}
\renewcommand{\subset}{\subseteq}
\newcommand{\ceil}[1]{\left\lceil #1 \right \rceil}
\newcommand{\C}{\mathcal{C}}
\newcommand{\HH}{\mathcal{H}}
\newcommand{\G}{\mathcal{G}}
\newcommand{\M}{\mathcal{M}}
\newcommand{\Z}{\mathbb{Z}}
\newcommand{\PP}{\mathcal{P}}
\newcommand{\Zp}{\Z^{\ge 0}}
\newcommand{\om}{\omega}
\newcommand{\calA}{\mathcal{A}}
\newcommand{\calB}{\mathcal{B}}
\newcommand{\Cbad}{\C^b}
\newcommand{\Ch}{\C^{\HH}}
\newcommand{\Cn}{\C^0}
\newcommand{\Cs}{\C^s}
\newcommand{\bnd}{t} 
\newcommand{\compt}{\Phi}  
\newcommand{\ccompt}{\Psi} 
\newcommand{\qinit}{400r+2} 
\newcommand{\qrest}{(1/\mu + 100)r^2}
\newcommand{\qcompt}{(1/\mu)r^2}
\newcommand{\qbad}{100r^2}  
\newcommand{\robmat}{robustly $2$-matchable\xspace}
\title{Minimum degree conditions for monochromatic cycle partitioning}
\author{
	D\'aniel Kor\'andi\thanks{Mathematical Institute, University of Oxford, Andrew Wiles Building, Radcliffe Observatory Quarter, Woodstock Road, Oxford, United Kingdom. Research supported in part by SNSF grants 200020-162884 and 200021-175977, and an SNSF Postdoc.Mobility fellowship. Email: \texttt{daniel.korandi}@\texttt{maths.ox.ac.uk}.}
    \and
	Richard Lang\thanks{Institute for Computer Science, 
		University of Heidelberg,
		Im Neuenheimer Feld 205,
		69120 Heidelberg, Germany.
		The research leading to these results was partially supported by the Deutsche Forschungsgemeinschaft (DFG, German Research Foundation) -- 42821240 (R. Lang). 
		Email: \texttt{lang}@\texttt{informatik.uni-heidelberg.de}.}
    \and
    Shoham Letzter\thanks{
		Department of Mathematics, 
		University College London, 
		Gower Street, London, WC1E 6BT, UK. 
		Email: \texttt{s.letzter}@\texttt{ucl.ac.uk}. 
		The research was supported by Dr.\ Max R\"ossler, the Walter Haefner Foundation, the ETH Zurich Foundation and the Royal Society.
    }
    \and
    Alexey Pokrovskiy\thanks{%
		Department of Mathematics, 
		University College London, 
		Gower Street, London, WC1E 6BT, UK.
		Email: \texttt{Dr.Alexey.Pokrovskiy}@\texttt{gmail.com}.%
	}
}
\date{}
\begin{document}

\maketitle

\begin{abstract}
	\setstretch{1.1}	
	\setlength{\parskip}{\medskipamount}
    \setlength{\parindent}{0pt}
    \noindent
	A classical result of Erd{\H o}s, Gy{\'a}rf{\'a}s and Pyber states that any $r$-edge-coloured complete graph has a partition into $O(r^2 \log r)$ monochromatic cycles.
  	Here we determine the minimum degree threshold for this property.
  	More precisely, we show that there exists a constant $c$ such that any $r$-edge-coloured graph on $n$ vertices with minimum degree at least $n/2 + c \cdot r\log n$ has a partition into $O(r^2)$ monochromatic cycles.
  	We also provide constructions showing that the minimum degree condition and the number of cycles are essentially tight.
\end{abstract}

\setstretch{1.2}

\section{Introduction}
	Monochromatic cycle partitioning is a combination of Ramsey-type and covering problems. Given an edge-coloured host graph $G$, one seeks to partition the vertex set of $G$ into as few monochromatic cycles as possible.\footnote{To avoid some trivial cases, we consider the empty set, vertices and edges to be cycles.}
	The case where the number of used cycles $f$ can be upper-bounded by a function of the number of colours $r$ is of particular interest.
	A classical result in this area is due to Erd{\H o}s, Gy{\'a}rf{\'a}s and Pyber~\cite{EGP91}, who showed that any $r$-edge-coloured complete graph $G=K_n$ admits a partition into $\ceil{25r^2 \log r}$ monochromatic cycles.
	The same authors conjectured that their result could in fact be improved to $r$ cycles.
	For $r=2$, this had been suggested about 20 years earlier by Lehel in a stronger sense, i.e.\ with the cycles having distinct colours.
	Lehel's conjecture was first proved for large $n$ by {\L}uczak, R{\"o}dl and Szemer{\'e}di~\cite{LRS98} and then for all $n$ by Bessy and Thomass{\'e}~\cite{BT10}, after preliminary work by Gy{\'a}rf{\'a}s~\cite{Gya83}.
	For $r \geq 3$, the conjecture of Erd{\H o}s, Gy{\'a}rf{\'a}s and Pyber turned out to be false.
	Pokrovskiy~\cite{Pok14} provided colourings of the complete graph that require $r$ cycles and a single additional vertex for a partition.
	He conjectured, however, that a partition into $r$ cycles and a constant number of vertices $c(r)$ should nevertheless be sufficient.
	In support of his conjecture, Pokrovskiy showed the case of $r=3$ with $c(r)=43000$.
	This was independently confirmed by Letzter~\cite{Let18} with $c(r) = 60$.
	The best known general upper bound for the number of monochromatic cycles required to partition any $r$-coloured complete graph is $100 r \log r$, established by  Gy{\'a}rf{\'a}s, Ruszink{\'o}, S{\'a}rk{\"o}zy and Szemer{\'e}di~\cite{GRSS06}.
	
	In the past decades monochromatic partitions of the complete graph have been researched in many ways, such as partitioning into graphs other than cycles~\cite{SS00,GS16-bounded}, more general colourings~\cite{CS16} and partitions of hypergraphs~\cite{BCFP19,BHS19,GS12,GLL+20}.
	For a broader overview, we refer the reader to the recent survey of~Gy{\'a}rf{\'a}s~\cite{Gya16}.
	Another natural problem arises when we consider  host graphs   that need not be complete.
	In particular, for which families of graphs can we still partition the vertex set into few monochromatic cycles?
	This question has been investigated for complete bipartite graphs~\cite{Hax97}, graphs with fixed independence number~\cite{Sar11}, infinite graphs~\cite{Rad78,ESSS17} and random graphs~\cite{KMN+18,LL18} among others.
	Here we are interested in families of graphs characterized by a large minimum degree.
	
	The study of minimum degree conditions for spanning substructures has a long tradition in extremal graph theory, Dirac's theorem being a classical example.
	Recent milestones of this area include the resolution of the P{\'o}sa-Seymour conjecture by Koml{\'o}s, S{\'a}rk{\"o}zy and Szemer{\'e}di~\cite{KSS98}, 
	the Bandwidth theorem by B{\"o}ttcher, Schacht and Taraz~\cite{BST09}, and the Hamilton decomposition theorem by Csaba,   K{\"u}hn,  Lo,   Osthus  and Treglown~\cite{CKL+16}.
	Many other results in this line of research are covered in the survey of K{\"u}hn and Osthus~\cite{KO14}.
	
	For monochromatic cycle partitions, the research of minimum degree conditions was initiated by Balogh,   Bar\'{a}t,   Gerbner, Gy\'arf\'as  and S\'ark\"ozy~\cite{BBG+14} with a strengthening of Lehel's conjecture. They showed that every 2-edge-coloured graph $G$ on $n$ vertices of minimum degree $(3/4 + \eps)n$ admits a partition of all but $o(n)$ vertices into two monochromatic cycles of distinct colours. They also conjectured that this can be improved to a proper partition even without the term of $\eps n$.
	(An easy construction shows that this is best possible.)
	The extension to a proper partition was verified by DeBiasio and Nelsen~\cite{DN17} and the full conjecture was subsequently proved by Letzter~\cite{Let15}. Given these advances, Pokrovskiy~\cite{Pok16} conjectured that for a 2-edge-coloured graph $G$ with $\delta(G) \geq 2n/3$ and $\delta(G) \geq n/2$ a partition into 3 and 4, respectively, cycles is possible.
	(Again, constructions show  that this is essentially best possible.)
	Allen, B{\"o}ttcher, Lang, Skokan and Stein~\cite{ABL+20} confirmed the first part of this conjecture approximately, i.e.\ for $\delta(G) \geq (2/3 + \eps)n$. Thus the problem for two colours is increasingly well understood.
	
	The goal of this research was to determine the minimum degree threshold for partitioning an $r$-edge-coloured graph into $f(r)$ monochromatic cycles for general $r$, for any function $f(r)$ that depends only on $r$.
	A lower bound of $n/2$ for this threshold is shown by the simple example of a slightly unbalanced complete bipartite graph.
	However, a more involved construction shows that a minimum degree below $n/2 + O(\log n / \log \log n)$ already requires $\Omega(\log n / \log \log n)$ monochromatic cycles for a partition.

 	\begin{prop}\label{prop:degree-best-possible}
		Let $n$ be sufficiently large. Then there is a $2$-edge-coloured graph $G$ on $n$ vertices with $\delta(G) \geq {n}/{2} + {\log n}/{(16\log\log n)}$ whose vertices cannot be partitioned into fewer than $ {\log n}/{(32\log\log n)}$  monochromatic cycles.
	\end{prop}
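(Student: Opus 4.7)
Set $k = \lfloor \log n/(16 \log\log n) \rfloor$ and $t = \lfloor \log n/(32 \log\log n)\rfloor \le k/2$. I will construct a $2$-edge-coloured graph $G$ on $n$ vertices with $\delta(G) \ge n/2 + k$ that cannot be partitioned into fewer than $t$ monochromatic cycles, via a multi-scale bipartite construction. Partition $V(G) = V_0 \sqcup V_1 \sqcup \cdots \sqcup V_t$ where $|V_i|$ grows geometrically with ratio $q \approx \log n$ for $i\ge 1$ (so that $q^t \ll n$ and $\sum_{i\ge 1}|V_i|$ is, say, $\le n/2$), while $V_0$ absorbs the remaining vertices. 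In each $V_i$ with $i \ge 1$, fix a bipartition $V_i = A_i \sqcup B_i$ with $|A_i|=|B_i|+2$, and include in $G$ all edges of the complete bipartite graph between $A_i$ and $B_i$ but no edges inside either side. All other possible edges of $G$ (within $V_0$, between distinct $V_i$'s, or between $V_0$ and the blocks) are present. The $2$-colouring is then chosen so that the bipartite edges inside $V_i$ receive colour $c_i$ (red for odd $i$, blue for even $i$), and the remaining edges are coloured in a globally coherent pattern so that each colour class, restricted to the relevant union of blocks together with $V_0$, is itself bipartite on a prescribed cut.

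\textbf{Minimum degree.} The non-edges of $G$ all lie inside some $A_i$ or some $B_i$. Hence a vertex $v\in A_i$ has $n-|A_i|\ge n - (q^t/2+1) \ge n/2+k$ neighbours once $q,t$ are tuned so that $q^t \le n/2 - 2k - 2$, and similarly for $v\in B_i$, while $v\in V_0$ has degree $n-1$. This gives $\delta(G)\ge n/2+k$.

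\textbf{Cycle lower bound and main obstacle.} Since $G[V_i]$ is bipartite, every monochromatic cycle $C$ intersects $V_i$ in a union of alternating $A_iB_i$-paths, giving $\bigl||C\cap A_i|-|C\cap B_i|\bigr|\le \tfrac12 |\partial_i C|$, where $\partial_i C$ counts the edges of $C$ with exactly one endpoint in $V_i$. Summed over a putative partition $\mathcal{C}$ into monochromatic cycles, the fixed imbalance $|A_i|-|B_i|=2$ at each block must be ``paid for'' by boundary-crossing edges. The crux of the argument is to engineer the colouring of the cross-block and bulk edges so that any single monochromatic cycle can contribute to the imbalance of at most one block: using the global bipartiteness of each colour class, any such cycle is itself bipartite-balanced on the global cut, which couples the block imbalances so that only one of them can be ``flipped'' per cycle. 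A pigeon-hole over the $t$ blocks then forces $|\mathcal{C}|\ge t$. The hardest part is to specify the inter-block and bulk colouring correctly and rigorously verify this ``one imbalance per cycle'' principle; because only two colours are available to separate $t$ distinct block imbalances, the colouring must be globally coherent (both colours bipartite on carefully chosen cuts) while still being compatible with the prescribed bipartite edges inside every $V_i$.
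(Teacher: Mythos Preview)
Your proposal has a genuine gap at exactly the point you flag as ``the hardest part'': you never specify the inter-block colouring, and the ``one imbalance per cycle'' principle is neither proved nor, as stated, achievable with two colours.

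Concretely, you want both colour classes to be bipartite on a vertex set containing $V_0$. But $V_0$ induces a complete graph on at least $n/2$ vertices, and a $2$-colouring of $K_m$ in which both colour classes are bipartite forces $m\le 4$: if the red bipartition is $(X_R,Y_R)$ and the blue one is $(X_B,Y_B)$, any two vertices lying in the same cell of the common refinement are joined by neither a red nor a blue edge, so each of the four cells $X_R\cap X_B,\,X_R\cap Y_B,\,Y_R\cap X_B,\,Y_R\cap Y_B$ has size at most one. So the stated plan cannot be carried out. More structurally, global bipartiteness of a colour class yields a \emph{single} linear parity constraint on any cycle in that colour; two colours give you at most two such constraints, which cannot separate $t=\Theta(\log n/\log\log n)$ independent block imbalances. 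Your pigeonhole over $t$ blocks therefore has no force.

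The paper's construction is quite different and avoids this obstruction entirely. It takes a bipartition $A\cup B$ with $|B|=n/2+\log n/(16\log\log n)$, colours all $A$--$B$ edges red, and places on $B$ a \emph{blue} graph of minimum degree $\Theta(\log n)$ and girth $\Theta(\log n/\log\log n)$ (obtained by a standard random-graph-with-deletions argument). The point is not a parity constraint but a \emph{length} constraint: every proper blue cycle is long, so using even one of them overshoots the $B$-side and forces many singletons in $A$; using none forces many blue edges or singletons to absorb the surplus $|B|-|A|$. Either way one needs $\Omega(\log n/\log\log n)$ cycles. The high-girth ingredient is what lets a single extra colour enforce a super-constant lower bound, and it is precisely what your multi-block scheme lacks.
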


	Our main contribution states that a minimum degree slightly larger than this is in turn sufficient for a partition into $O(r^2)$ cycles.

	\begin{thm}\label{thm:main}
		For $r \geq 2$, let $n$ be sufficiently large. Then any $r$-edge-coloured graph $G$ on $n$ vertices with  $ \delta(G) \geq n/2 + 1200 r\log n$ admits a  partition into $10^7 r^2$ monochromatic cycles.
	\end{thm}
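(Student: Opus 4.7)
The plan is to apply the regularity method. Use the regularity lemma with slackness parameters $\eps, \mu$ depending on $r$ to produce a cluster partition $V(G) = V_0 \cup V_1 \cup \cdots \cup V_k$, where $|V_0| \leq \eps n$ and the clusters $V_1,\ldots,V_k$ have equal size. Form the $r$-edge-coloured reduced graph $R$ on $k$ vertices by assigning each pair $(V_i,V_j)$ its majority colour among the regular-dense pairs. The minimum degree condition transfers to $\delta(R) \geq (1/2 + o(1))k$, so $R$ inherits the ``just above half'' minimum degree to which we apply the combinatorial heart of the proof.

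The central step is to find a near-partition of $V(R)$ into at most $O(r^2)$ monochromatic robustly $2$-matchable components. Because $\delta(R) > k/2$, for every colour $c$ either the largest $c$-monochromatic component is essentially spanning on its support, or colour $c$ covers only a small set; iterating this observation one colour at a time peels off $O(r)$ robust pieces per colour, $O(r^2)$ in total (the macros $\Cinit, \Cconn, \Cbal, \Cbad$ suggest such a layered decomposition into initial, connecting, balancing and bad components). Each robust piece, via the blow-up lemma or a standard regularity-cycle argument, turns into a single monochromatic cycle in $G$ covering the clusters it spans; these account for the main contribution to the final $10^7 r^2$ cycles and cover all but a $O(r\log n)$-sized set of vertices.

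The remaining vertices come from three sources: the regularity-exceptional set $V_0$, vertices left over from balancing the two sides of bipartite monochromatic components, and endpoints used to stitch pieces together inside the reduced graph. Together these number at most $O(r\log n)$, and the slack of $1200 r \log n$ in the minimum degree is tailored precisely to absorb them: each such vertex $v$ has, in some colour $c$, many neighbours inside a large $c$-monochromatic piece, so $v$ can be threaded into the corresponding cycle via a short detour, creating no new cycle. A constant $O(r^2)$ number of \emph{bad} cycles handle any residual vertices that evade clean absorption.

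The main obstacle will be the joint handling of bipartite balancing and exceptional absorption. A monochromatic piece whose reduced graph is bipartite needs (nearly) balanced sides to contain a spanning cycle; a mismatch of $\Theta(\log n)$ cannot be paid for by spawning $\Theta(\log n)$ extra cycles, so we must rebalance via a swapping argument that reassigns vertices between pieces while preserving monochromatic connectivity. Simultaneously, each exceptional vertex must be absorbed into the correct cycle without breaking the bipartite balance or the cycle structure — this demands robustly $2$-matchable absorber gadgets prebuilt inside each main cycle, covering every colour class in which an exceptional vertex might arrive. Making these two constructions compatible, while keeping the cycle count bounded in $r$ alone, is where the entire minimum-degree slack of $\Theta(r\log n)$ is consumed.
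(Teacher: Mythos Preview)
Your proposal has a genuine gap at its core: the $1200r\log n$ excess in the minimum degree is sub-linear, so it is \emph{invisible} to the regularity lemma. After regularisation you only get $\delta(R)\ge (1/2 - rd - \eps)k$, which is \emph{below} $k/2$; the assertion $\delta(R)\ge (1/2+o(1))k$ is false. Consequently there is no way to run the ``peel off robust pieces because $\delta(R)>k/2$'' step you describe. Likewise, the claim that the leftover vertices (including $V_0$) number $O(r\log n)$ is wrong by orders of magnitude: $|V_0|$ is $\Theta(\eps n)$, a constant fraction of $n$, and no $\Theta(r\log n)$ slack can absorb that.

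The paper's actual route is quite different. The $1200r\log n$ slack is spent \emph{before} any regularity is applied, in a purely combinatorial structural step (\Cref{thm:struct}). There one shows that if $G$ is not already $(\mu,20\mu)$-\robmat of type~1, then $G$ is close to balanced bipartite with parts $X,Y$; the slack guarantees that $G[X]$ and $G[Y]$ each have average degree $\Omega(r\log n)$, so by the Bondy--Simonovits theorem one can remove at most $\qinit$ monochromatic cycles of prescribed even lengths to make $|X|=|Y|$ exactly. Only then is regularity applied to the resulting \robmat graph $H$ (\Cref{thm:robmatpart}): the reduced graph inherits robust $2$-matchability, one takes the union $\HH$ of all monochromatic components of order at least $(\mu/r)m$ (at most $(1/\mu)r^2$ of them), and the $\Theta(\eps n)$ exceptional and atypical vertices are covered not by absorption but by the Erd\H{o}s--Gy\'arf\'as--Pyber lemma (\Cref{lem:EGP}) against a random sample, costing $O(r^2)$ further cycles. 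So both your diagnosis of where the slack is used and your mechanism for handling $V_0$ need to be replaced.
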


	We also provide a construction that shows that the number of cycles of \Cref{thm:main} is essentially best possible.
	
 	\begin{prop} \label{prop:cyclelowerbound}
		Let $\eps>0$ and $r$ be sufficiently large. Then there is an $r$-edge-coloured graph $G$ on $n=n(\eps,r)$ vertices with $\delta(G)\ge (1-4\eps)n$, whose vertices cannot be covered by fewer than $ {\eps^2(r-1)^2}/{4}$ monochromatic trees.
	\end{prop}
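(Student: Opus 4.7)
The plan is to construct $G$ as a blow-up of an affine plane, so that the colour classes correspond to parallel classes of lines and the monochromatic components are blown-up lines of bounded size. Let $k = \lfloor \eps(r-1)/2 \rfloor$, so the target is $k^2 \geq \eps^2(r-1)^2/4$ trees. I would choose a prime power $q$ with $k^2 \leq q \leq r-1$; for $r$ sufficiently large in terms of $\eps$, such a $q$ exists by Bertrand's postulate on prime gaps (the interval is non-empty precisely when $k^2 \leq r-1$, equivalently $\eps^2(r-1) \leq 4$, which delineates the admissible range).

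With $q$ in hand, let $\pi = \mathrm{AG}(2,q)$ be the affine plane of order $q$, having $q^2$ points partitioned by each of its $q+1$ parallel classes into $q$ parallel lines of $q$ points. Set $m = n/q^2$ and blow up each point $p \in \pi$ to a vertex set $V_p$ of size $m$. Take $G = K_n$, so that $\delta(G) = n-1 \geq (1-4\eps)n$ trivially. Colour each edge $\{u,v\}$ with $u \in V_p$, $v \in V_{p'}$, $p \neq p'$, by the parallel class of the line $pp'$; colour edges within each $V_p$ by an arbitrary fixed colour. This uses at most $q+1 \leq r$ colours in total.

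For the lower bound, observe that for each colour $c$, the monochromatic components are exactly the blow-ups of the lines in the $c$-th parallel class. Each such line contains $q$ points, so each component has $qm = n/q$ vertices. Any monochromatic tree lies in a single monochromatic component and therefore contains at most $n/q$ vertices. Consequently, covering all $n$ vertices requires at least $n/(n/q) = q \geq k^2 \geq \eps^2(r-1)^2/4$ monochromatic trees, as desired.

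The main obstacle is in arranging the parameters: selecting a prime power $q$ in the interval $[k^2, r-1]$ requires both $k^2 \leq r-1$ and an appeal to prime-gap estimates, and minor divisibility issues force one to take $n$ divisible by $q^2$. When $r$ lies outside the admissible window (for instance, when $r$ is much larger than $1/\eps^2$), the simple affine-plane blow-up is no longer sufficient, and one would instead combine it with an additional ingredient — such as taking several disjoint copies of the construction and carefully colouring the inter-copy edges to avoid creating large monochromatic components — but the guiding principle remains the same: force every monochromatic component to be small so that no tree can cover too many vertices at once.
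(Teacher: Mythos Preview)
Your affine-plane construction gives a lower bound that is only \emph{linear} in $r$, not quadratic, and this is an intrinsic limitation of the ``all components small'' method rather than a parameter issue to be patched. Your bound on the number of trees is $n$ divided by the maximum monochromatic-component size $s$. But in any $r$-coloured graph with $\delta(G)\ge(1-4\eps)n$, every vertex $v$ satisfies $\sum_c \deg_c(v)\ge(1-4\eps)n$, so by pigeonhole some monochromatic component containing $v$ has at least $(1-4\eps)n/r$ vertices; hence $s\ge(1-4\eps)n/r$ and your bound $n/s$ can never exceed $r/(1-4\eps)$. Your own parameter check reflects this: the interval $[k^2,r-1]$ with $k\approx\eps(r-1)/2$ is non-empty only when $\eps^2(r-1)\le 4$, i.e.\ when $r=O(1/\eps^2)$ --- exactly the regime where $\eps^2(r-1)^2/4\le r-1$ and a linear bound happens to be enough. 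For the large $r$ the proposition actually asks for, the required $q$ does not exist, and the ``disjoint copies'' fix you sketch cannot help, because the degree--pigeonhole obstruction above is local and applies to every vertex in any such construction.

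The paper's proof takes a genuinely different route that does not try to keep all components small. It fixes a properly $(r-1)$-edge-coloured complete graph $K_Y$ on $r-1$ vertices and lets $X$ be the family of all rainbow matchings of size $(1-3\eps)(r-1)/2$ in $K_Y$; in the bipartite graph on $X\cup Y$ one joins $x\in X$ to $y\in Y$ in colour $i$ whenever the matching $x$ uses the colour-$i$ edge at $y$. Then each monochromatic component meeting $X$ corresponds to a single edge of $K_Y$, so covering $X$ amounts to choosing a set of edges of $K_Y$ that meets every large rainbow matching. A result of Gy\'arf\'as and S\'ark\"ozy on rainbow matchings forces any such hitting set to have size at least $\eps^2(r-1)^2/4$. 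Blowing up $Y$ and adding all $Y$-$Y$ edges in the one remaining colour then gives the required minimum degree. The quadratic dependence on $r$ comes from the rainbow-matching extremal problem, not from a component-size count, and this is exactly what your approach is missing.
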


	It is worth mentioning here a recent paper of Buci\'c, Kor\'andi and Sudakov \cite{BKS19}, who were interested in covering $r$-coloured random graphs $G(n,p)$ by monochromatic trees. Similarly to our results, they proved that the minimum number of monochromatic trees needed is $\Theta(r^2)$ when $p$ is just above the threshold for the existence of a covering with a bounded number of trees. 

	Our results imply in particular, that we can determine the smallest number of cycles necessary for a partition of bounded degree graphs up to a constant factor. This stands in contrast to the situation for complete graphs, where the gap between upper and lower bound remains a factor of $\log r$.

\section{Overview} \label{sec:overview}

	A brief overview of the proof of \Cref{thm:main} is as follows. 
	Using Szemer\'edi's regularity lemma, we obtain a regular partition $\{V_0,V_1,\dots,V_m\}$ of the vertices of $G$, and define the corresponding reduced graph $\G$. 
	We then select $O(r^2)$ monochromatic components of $\G$ in such a way that their union, denoted by $\HH$, robustly contains a perfect matching.
	The robustness roughly translates into $\HH$ having a perfect matching even after removing any small set of vertices.
	So, in particular, $\HH$ has a perfect matching $\M$.
	We can turn $\M$ into $O(r^2)$ disjoint monochromatic cycles $\Ch$ covering almost all of $G$ using a method of {\L}uczak~\cite{Luc99}.
	
	The plan is now to add the remaining vertices $V(G) \sm \Ch$ into the cycles of $\Ch$.
	More precisely, we intend to use the blow-up lemma to find monochromatic spanning paths in the regular pairs corresponding to $\M$.
	There are two obstacles to this.
	First, there might be a small number of ``bad'' vertices blocking the use of the blow-up lemma.
	Second, the clusters $V_i \sm \Cbad$ might be slightly different in size, which prevents us from even allocating spanning paths in the pairs.
	
	We deal with the irregular vertices by covering them with $O(r^2)$ additional cycles $\Cbad$, exploiting their large degrees.
	We then balance the clusters by carefully extending the cycles of $\Ch$.
	At this point, the robustness under which $\HH$ has a perfect matching is crucial.
	Having overcome these two issues, we can finish by applying the blow-up lemma to add the remaining vertices into $\Ch$.
	Thus $\Ch \cup \Cbad$ presents the desired cycle partition.
	
	This method works as long as $\G$ admits a spanning subgraph $\HH$, which  robustly contains a perfect matching, but unfortunately, we cannot always guarantee this. However, if such subgraph $\HH$ does not exist, then we can show that $G$ must be balanced bipartite after the removal of $O(r)$ monochromatic cycles $\C$. (At this point, we use the additional $1200 r \log n$ in the minimum degree.) Thus we can apply a bipartite analogue of the above detailed approach to cover the rest with cycles $\Ch \cup \Cbad$. In this case the cycle partition consists of $\Ch \cup \Cbad \cup \C$.

\section{Preliminaries}\label{sec:preliminaries}
	In this section we introduce some notations and tools needed for the proof of \Cref{thm:main}.

	\subsection{Notation}
		Let $G=(V,E)$ be a graph.  The \emph{order} of $G$ is $|V(G)|$ and the \emph{size} of $G$ is $|E(G)|$.
		We denote the neighbourhood of a vertex $v$ by $N_G(v)$ and write $N_G(v,W) = N_G(v) \cap W$ for a set of vertices $W \subset V(G)$.  We denote the degree of $v$ by $\deg_G(v) = |N_G(v)|$ and $\deg_G(v,W) = |N_G(v,W)|$.
		For a set of vertices $S \subset G$ we write $N_G(S) = \left(\bigcup_{s \in S} N(s)\right) \sm S$.
		When the underlying graph $G$ is clear from the context, we often omit the subscript $G$.
		For another graph $H$, the union $G\cup H$ is the graph on vertex set $V(G) \cup V(H)$ and edge set $E(G) \cup E(H)$.
		The independence number of $G$ is denoted by $\alpha(G)$.
		For disjoint sets $X,Y \subset V(G)$, we denote by $G[X,Y]$ the induced bipartite subgraph of $G$ with bipartition $\{X,Y\}$.
		
		An $r$-edge-colouring of $G$ assigns 
		one \emph{colour} from the set $[r]=\{1,2,\dots, r\}$ to each edge of $G$.  For $i \in [r]$, we use $G_i$ to denote the subgraph of $G$ whose edges are those that have colour $i$. 
		A (connectivity) component of colour $i$ of $G$ is a (connectivity) component of $G_i$.
		When the context is clear, we will simplify our notation by using $i$ instead of $G_i$ in the subscript of $N$ and $\deg$.
		For example, a vertex $v$ with $\deg_i(v) = 0$ is in an $i$-coloured component of order 1.
			
		A $v$-$w$-path is a path that starts at $v$ and ends at $w$.
		As said above, we allow the empty set, single vertices and edges in our cycle partitions. We occasionally use the term ``proper cycle'' to emphasize that a cycle is not an empty set, a vertex or an edge.

		In some of our statements, we will make assumptions of the form $x \ll y$ for certain parameters $x$ and $y$. This should be understood as equivalent to the condition $x\le f(y)$ for some unspecified increasing function $f$. In our usage, this is always a strengthening of $x\le y$.
		
	\subsection{Regularity}\label{sec:regularity}
		Given a graph $G$ and disjoint vertex sets $V, W \subset V(G)$ we
		denote the number of edges between $V$ and $W$ by $e(V,W)$ and the
		density of $(V,W)$ by $d(V,W) = e(V,W)/(|V||W|)$.  The pair $(V,W)$
		is called $\eps$-regular, if all subsets $X
		\subset V$, $Y \subset W$ with $|X| \ge \eps |V|$ and $|Y| \ge \eps
		|W|$ satisfy $|d(V,W)-d(X,Y)| \le \eps$.  

		We say that a
		vertex $v \in V$ has \emph{typical degree} in $(V,W)$, if $\deg(v,W)
		\ge (d(V,W)-\eps)|W|$.  It follows directly from the definition of
		$\eps$-regularity that
		\begin{equation}\label{equ:typical-degree-in-reg-pair}
			\text{all but at most $\eps |V|$ vertices in $V$ have typical degree in $(V,W)$.}
		\end{equation}	
		
		The next lemma allows us to find (spanning) paths in regular pairs.
		A similar statement can be deduced from the well-known blow-up lemma~\cite{KSS97}, but it can also be proved independently, see \Cref{appendix:proofs-prelims}.
		
		\begin{lem}[Long paths in regular pairs] \label{lem:inner-paths}
			Let $n$ be an integer and let $\eps, d$ be numbers with $0 < 1/n
			\ll \eps \ll d < 1$.  Suppose that $(V_1,V_2)$ is an
			$\eps$-regular pair of density $d=d(V_1,V_2)$ and with $|V_1| =
			|V_2| = n$ in a graph $G$. For $i \in \{1, 2\}$, let $v_i \in V_i$ and let $U_i \subset V_i$ be a set of size at least $n/6$ which contains at least $2\eps n$ neighbours of $v_{3-i}$.
			
			Then for every $2 \le k \le (1 - 24\eps)\cdot \min\{|U_1 |, |U_2|\}$, there is a $v_1$-$v_2$-path of order $2k$ in $G[U_1\cup \{v_1\}, U_2\cup \{v_2\}]$.
			
			If, additionally, $\delta(G[U_1, U_2]) \ge 5\eps n$, then $G[U_1\cup \{v_1\}, U_2\cup \{v_2\}]$ contains a $v_1$-$v_2$-path of order $2k$ for every $2 \le k \le \min\{|U_1\cup\{v_1\}|, |U_2\cup\{v_2\}|\}$.
		\end{lem}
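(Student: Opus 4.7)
The plan is to build the $v_1$-$v_2$ path $v_1 = z_1, z_2, \ldots, z_{2k} = v_2$ greedily, alternating sides, using $\eps$-regularity of $(V_1, V_2)$ to guarantee many valid continuations at each step. First I would restrict attention to the typical vertices: by \eqref{equ:typical-degree-in-reg-pair}, at most $\eps n$ vertices of each $V_i$ have atypical degree to the other side, so writing $U_i^\star \subset U_i$ for the typical vertices in $U_i$, the hypothesis $|U_i| \ge n/6$ gives $|U_i^\star| \ge |U_i| - \eps n$. Moreover, since $v_{3-i}$ has at least $2\eps n$ neighbours in $U_i$, at least $\eps n$ of these are typical. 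To secure a clean endgame, I would reserve a ``landing pool'' $P_1 \subset N(v_2) \cap U_1^\star$ of size $\eps n$, and a symmetric ``starting pool'' $P_2 \subset N(v_1) \cap U_2^\star$.

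I would then construct the path from $z_1 = v_1$ inwards, maintaining the invariant that each newly selected $z_{j-1}$ is typical and has at least $(d-\eps)|A|$ neighbours in the set $A$ of currently unused typical non-pool vertices on the opposite side. Given this invariant, the step $z_{j-1} \to z_j$ selects an unused typical $z_j \in A \cap N(z_{j-1})$ that also satisfies the invariant for the next round; a fresh application of $\eps$-regularity to $A$ and the opposite side shows that at most $\eps n$ vertices on $z_j$'s side could fail the degree condition against the updated set $A'$, so such a $z_j$ exists among the at least $(d-\eps)|A|$ candidates. A direct count using $k \le (1-24\eps)\min\{|U_1|, |U_2|\}$ together with $|U_i| \ge n/6$ yields $|A|,|A'| \ge 3\eps n$ throughout, comfortably above the regularity threshold. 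The final step draws $z_{2k-1}$ from the reserved pool $P_1$: by the invariant, $z_{2k-2}$ has many neighbours in $P_1$, so the path closes at $z_{2k} = v_2$.

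For the second part, the same construction runs, but now must cover up to $k = \min\{|U_1 \cup \{v_1\}|, |U_2 \cup \{v_2\}|\}$ vertices per side, well past the point where $|A|$ drops below $\eps n$ and regularity ceases to supply degree bounds. Here the uniform assumption $\delta(G[U_1, U_2]) \ge 5\eps n$ takes over: even when only a handful of unused vertices remain on each side, every remaining vertex still has $\ge 5\eps n$ neighbours across, of which at most $k-1$ could be already used, leaving plenty of options to continue the greedy extension until the path spans the desired number of vertices. The main obstacle throughout is exactly this endgame---the interplay between the shrinking unused pool and the $\eps n$ regularity threshold---and the $(1-24\eps)$ buffer in the first part, respectively the minimum degree hypothesis in the second, are precisely what bridge the gap. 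A secondary technical point is propagating the typicality invariant from step to step, which forces each $z_j$ to be chosen not only for adjacency to $z_{j-1}$ but also for compatibility with the \emph{next} unused pool.
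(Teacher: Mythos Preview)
The second conclusion has a fatal gap. You write that once regularity runs out, the hypothesis $\delta(G[U_1,U_2])\ge 5\eps n$ takes over because ``every remaining vertex still has $\ge 5\eps n$ neighbours across, of which at most $k-1$ could be already used, leaving plenty of options''. But $k$ can be as large as $|U_1|\ge n/6$, which dwarfs $5\eps n$: once the greedy path has consumed more than $5\eps n$ vertices on a side---which happens after only $O(\eps n)$ steps, far short of the target---every neighbour of the current endpoint may already lie on the path, and the extension stalls. A minimum degree of order $\eps n$ simply cannot drive a greedy construction of a Hamilton-type path; one needs a rotation--extension mechanism. The paper supplies this via an auxiliary claim, proved by a P\'osa-style argument, that $G[W_1,W_2]$ contains a cycle of length $2\min\{|W_1|,|W_2|\}$ whenever $|W_i|\ge n/10$ and $\delta(G[W_1,W_2])\ge 3\eps n$. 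For the second part it then passes to a random $U_1'\subseteq U_1$ of size $k$, applies the claim to obtain a $2k$-cycle in $G[U_1',U_2]$, and splices $v_1,v_2$ into the cycle with one more regularity argument.

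Your greedy is closer to working for the first conclusion, but still needs tightening. The invariant as stated gives $z_{2k-2}$ many neighbours in the non-pool set $A$, not in the landing pool $P_1$, so the final hop to $v_2$ is not yet justified; you would have to fold ``and has $\ge (d-\eps)|P_1|$ neighbours in $P_1$'' into the invariant. More seriously, the inductive step needs $(d-\eps)|A|>\eps n$ to guarantee a good $z_j$ among the candidates, and near the end of the construction $|A|$ is only about $24\eps m-2\eps n\approx 2\eps n$ (take $|U_i|=n/6$); this forces $d>1/2$, whereas the lemma allows any small $d$. The paper's cycle claim handles both parts uniformly and sidesteps this quantitative squeeze.
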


		Szemer\'edi's Regularity Lemma~\cite{Sze76} allows one to partition
		the vertex set of
		a graph into clusters of vertices, in a way that most pairs of
		clusters are regular.  We will use the regularity lemma
		in its degree form (see~\cite{KS96}), with $r$ colours and a prepartition.

		\begin{lem}[Regularity Lemma]\label{lem:regularity}
			For every $\eps>0$ and integers $r,\ell$ there is an $M =
			M(\eps,r,\ell)$ such that the following holds.
			Let $G$ be a graph on $n\ge {1}/{\eps}$ vertices whose edges are coloured with $r$ colours, let $\{W_1,\dots,W_{\ell'}\}$ be an equipartition of $V(G)$ for some $1\le \ell'\le \ell$, and let $d > 0$.
			Then there is a partition $\{V_0, \dots, V_m\}$ of $V(G)$ and a subgraph $G'$ of $G$ with vertex set $V(G) \sm V_0$ such that the following conditions hold.
			\begin{enumerate}[\upshape (a)]
				\item 
					$1/\eps \le m \le M$,
				\item \label{itm:reg-lemma-sizes}
					$|V_0| \le \eps n$ and $|V_1| = \dots = |V_m| \le \eps n$,
				\item 
					for every $i \in [m]$, there is $j \in [\ell']$ with $V_i \subset W_j$,
				\item 
					for every $j \in [\ell']$, there are equally many $i \in [m]$ with $V_i \subset W_j$,
				\item $\deg_{G'}(v) \ge \deg_G(v)-(rd+\eps)n$ for each $v
					\in V(G) \sm V_0$,
				\item 
					$G'[V_i] $ contains no edges for $i \in [m]$, and
				\item \label{itm:reg-lemma-eps-regular} 
					all pairs $(V_i,V_j)$ are $\eps$-regular in $G'$ and have in each colour either density $0$ or at least $d$.
			\end{enumerate}
		\end{lem}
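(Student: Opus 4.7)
The plan is to derive this statement from the standard (uncoloured) Szemerédi Regularity Lemma applied to the colour classes $G_1,\ldots,G_r$ simultaneously, followed by two rounds of cleaning.

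First, I would refine the prepartition $\{W_1,\ldots,W_{\ell'}\}$ into a sufficiently fine equipartition by splitting each $W_j$ into $t$ equal parts for some large $t = t(\ell,\eps)$, sweeping remainders into an initial exceptional set. This refined equipartition is then passed as the starting partition to a multi-colour version of the Szemerédi Regularity Lemma with regularity parameter $\eps_0 \ll \eps$. The output is an equipartition $\{V_0^*, V_1,\ldots,V_m\}$ with $|V_0^*| \leq \eps_0 n$, $|V_1| = \cdots = |V_m|$, each $V_i$ contained in some $W_j$ (with equally many per $W_j$), and with at most $\eps_0 m^2$ pairs $(V_i,V_j)$ that fail to be $\eps_0$-regular in some colour. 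Conditions (a), (c), (d) then hold directly (modulo a trivial rounding adjustment in (d)).

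The next step is cleaning. By a double-counting argument, the number of clusters that participate in more than $\sqrt{\eps_0}\, m$ irregular pairs is at most $\sqrt{\eps_0}\, m$; absorb all such clusters into $V_0$ (rounding up to respect (d)). Similarly, for each colour $c$ and each surviving pair $(V_i,V_j)$ whose colour-$c$ density is positive but less than $d$, mark as atypical those $v \in V_i$ with colour-$c$ degree into $V_j$ exceeding $(d+\eps_0)|V_j|$; by \eqref{equ:typical-degree-in-reg-pair} there are at most $\eps_0|V_i|$ such vertices, and a further double-counting step absorbs into $V_0$ any cluster containing too many atypicals across all pair-colour combinations. Both cleanings move whole clusters, so the equipartition (b) is preserved, and the total contribution to $V_0$ stays below $\eps n$ once $\eps_0$ is chosen small enough.

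Finally, define $G'$ by deleting from $G$ all edges incident to $V_0$, all edges inside some $V_i$, all edges in surviving irregular pairs, and, for each colour $c$ and each pair $(V_i,V_j)$ of colour-$c$ density less than $d$, all colour-$c$ edges between $V_i$ and $V_j$. Conditions (f) and (g) are immediate, and the remaining condition (e) follows by bounding the per-vertex loss: at most $2\eps_0 n$ edges from the first two deletion types, at most $\sqrt{\eps_0}\, n$ from irregular pairs (by the first cleaning), and at most $r(d+\eps_0) n$ from low-density removals (by the typical-degree guarantee from the second cleaning). The main technical difficulty is exactly this uniform per-vertex bound in (e): the regularity lemma only controls the \emph{number} of irregular pairs globally, not how many pairs a specific vertex sits in, which is precisely the reason the cluster-absorption step is essential.
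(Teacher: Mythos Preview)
The paper does not supply a proof of this lemma; it is quoted as the standard degree form of Szemer\'edi's regularity lemma, with a reference to the Koml\'os--Simonovits survey \cite{KS96}. Your outline --- apply the ordinary multicolour regularity lemma with a much smaller parameter $\eps_0$, then clean --- is the correct route, and your first cleaning step is exactly the standard one: absorbing the at most $\sqrt{\eps_0}\,m$ clusters that lie in more than $\sqrt{\eps_0}\,m$ irregular pairs ensures that every surviving vertex loses at most $\sqrt{\eps_0}\,n$ edges from irregular pairs.

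There is, however, a genuine gap in your second cleaning. Absorbing whole \emph{clusters} that contain ``too many atypicals'' cannot deliver the per-vertex bound (e). The total number of atypical vertex--pair--colour incidences in any cluster $V_i$ is automatically at most $rm\eps_0|V_i|$, so no cluster is ever bad by any useful threshold; yet a single vertex $v\in V_i$ may still be atypical in every one of its (up to) $rm$ pair--colour combinations and hence lose far more than $r(d+\eps_0)n$ edges in the low-density deletions. The standard fix is to move \emph{vertices}, not clusters: put into $V_0$ every $v$ that is atypical in more than, say, $\sqrt{\eps_0}\,m$ pair--colour combinations. By averaging this affects at most $r\sqrt{\eps_0}\,|V_i|$ vertices per cluster, and one then trims all clusters down to the size of the smallest to restore the equipartition properties (b) and (d). With this correction the rest of your argument goes through.
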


		Let $G$ be an $r$-edge-coloured graph with a partition $\{V_0,
		\dots,V_m\}$ obtained from \Cref{lem:regularity} with parameters
		$\eps$ and $d$.
		We define the $(\eps,d)$-\emph{reduced graph} $\G$ to be a graph with vertex set $V(\G) = \{x_1, \dots,x_m\}$ where two vertices $x_i$ and $x_j$ are connected by an edge of colour
		$c$, if $(V_i,V_j)$ is an $\eps$-regular pair of density at least $d$
		in colour $c$ (if this holds for multiple colours, we choose one of them arbitrarily).
		Note that if $G$ was balanced $\ell$-partite with partition $\{W_1,\dots,W_\ell\}$, then $\G$ is a balanced $\ell$-partite graph, as well.
		It is often convenient to refer to a \emph{cluster} $V_i$ via its
		corresponding vertex in the reduced graph, i.e.\ $V_i = V(x_i)$.
		
		The following properties of the reduced graph are easy to check. We provide a proof in \Cref{appendix:proofs-prelims}.
		\begin{prop} \label{prop:reducedgraph}
			\hfil
			\begin{enumerate}[\upshape (a)]
				\item \label{itm:reducedgraph-a} 
					If $\deg_G(v)\ge cn$ for some $v\in V_i$, $i\in[m]$, then $\deg_{\G}(x_i)\ge (c-rd-\eps)m$.
				\item \label{itm:reducedgraph-b}
					If $\deg_G(v)\ge cn$ for all but $\eta n$ vertices $v\in V(G)$, then $\deg_{\G}(x)\ge (c-rd-\eps)m$ for all but $(\eta+\eps) m$ vertices $x \in V(\G)$.
				\item \label{itm:reducedgraph-c}
					If $\bigcup_{x_i\in X} V_i$ induces at least $cn^2$ edges in $G$ for some $X\subset V(\G)$, then $X$ induces at least $(c-rd-\eps)m^2$ edges.
			\end{enumerate}
		\end{prop}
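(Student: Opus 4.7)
The three parts are routine consequences of the regularity lemma's output, and I would prove them as three essentially independent counting arguments. In each case the strategy is to translate a statement about $G$ or $G'$ into one about $\G$ using that all clusters have common size $s$ with $(1-\eps)n/m \le s \le n/m$, and that an edge $x_ix_j$ of $\G$ is present precisely when the pair $(V_i,V_j)$ is $\eps$-regular with some colour of density at least $d$.

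For part~(a), I would first use property~(e) of \Cref{lem:regularity} to convert the hypothesis $\deg_G(v) \ge cn$ into $\deg_{G'}(v) \ge (c-rd-\eps)n$. Since $G'$ has no edges inside a single cluster and every non-zero pair of clusters in $G'$ is represented in $\G$, every $G'$-neighbour of $v$ lies in a cluster $V_j$ with $x_ix_j \in E(\G)$. Dividing the $G'$-degree bound by the common cluster size $s\le n/m$ immediately gives the desired $\G$-degree bound.

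Part~(b) is then an essentially one-line consequence of~(a): a cluster $x_i$ can fail the conclusion only if \emph{every} vertex of $V_i$ has $G$-degree below $cn$, so the number of such clusters is at most $\eta n / s$, and the factor $(1-\eps)^{-1}$ coming from the lower bound on $s$ is exactly what is absorbed into the extra $+\eps$ slack in the bound $(\eta+\eps)m$.

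For part~(c) I would decompose the edges of $G$ inside $\bigcup_{x_i\in X} V_i$ into three buckets and bound each separately. First, edges lost when passing from $G$ to $G'$ (including all edges inside individual clusters and all edges touching $V_0$) are bounded via a handshake sum using property~(e). Second, $G'$-edges in pairs $(V_i,V_j)$ that are represented in $\G$ contribute at most $s^2$ per pair and give the main term $e_\G(X)\cdot s^2$. Third, $G'$-edges in pairs with both $x_i,x_j \in X$ but $x_ix_j \notin E(\G)$ come from pairs where every one of the $r$ colours has density less than $d$, contributing at most $rd s^2$ per pair by property~(g), hence at most $rd\cdot m^2 s^2/2$ in total. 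Rearranging $cn^2 \le$ (sum of the three buckets) and dividing by $s^2$ produces the bound $e_\G(X) \ge (c-rd-\eps)m^2$ after collecting the error terms. The only step requiring any real care is the bookkeeping in~(c), where one has to be tidy about what counts as a ``lost'' edge versus a low-density pair; I do not anticipate any genuine obstacle.
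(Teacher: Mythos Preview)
Your proposal is correct and follows essentially the same route as the paper for all three parts. One small point worth noting in part~(c): your third bucket is in fact empty. By property~(g) of \Cref{lem:regularity}, in $G'$ every pair $(V_i,V_j)$ has, in each colour, density either $0$ or at least $d$; hence if $x_ix_j\notin E(\G)$ then every colour has density exactly $0$ and there are no $G'$-edges between $V_i$ and $V_j$ at all. The paper exploits this directly, bounding the lost edges by $(rd+\eps)n^2$ and then observing that every remaining $G'$-edge lies in a pair represented in $\G$, so the number of $\G$-edges in $X$ is at least $(c-rd-\eps)n^2/|V_i|^2 \ge (c-rd-\eps)m^2$. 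Your extra bucket does no harm to the arithmetic (the factor $1/2$ from the handshake compensates), but recognising it as empty makes the argument cleaner and matches the paper's version.
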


		The next lemma allows us to connect clusters by short paths if the corresponding vertices in the reduced graph lie in the same component.
		This is the basis of our application of the connected matchings method, a technique that goes back to {\L}uczak~\cite{Luc99} and is by now a standard way of constructing long paths and cycles in dense graphs. See \Cref{appendix:proofs-prelims} for a proof. 

		\begin{lem}[Connecting Paths] 
			\label{lem:luczak} \label{lem:connecting-paths}
			Let $n$ be an integer and let $\eps, d$ be numbers with $0 < 1/n
			\ll \eps \ll d \le 1$.  Let $G=(V,E)$ be an $r$-edge-coloured graph on $n$ vertices
			with a partition $\{V_0, \dots, V_m\}$ and an $(\eps,d)$-reduced graph $\G$ obtained from
			\Cref{lem:regularity}.  Suppose that $W \subset V$ is a
			vertex set such that $|W\cap V_i|\le (d/2)\cdot|V_i|$ for every $i\in[m]$.
			Let $x_i x_j, x_{i'}x_{j'} \in E(\G)$ be two edges in a component of colour $c$.
					
			Then for any two vertices $v \in V_i$ and $w\in V_{j'}$ of typical degree in colour $c$ in $(V_i,V_j)$ and $(V_{i'},V_{j'})$, $G$ contains a $c$-coloured $v$-$w$-path $P$ of order at most $2m$ that avoids all vertices of $W$.
		\end{lem}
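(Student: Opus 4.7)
The plan is to build the desired $v$-$w$-path cluster by cluster, using a colour-$c$ path in the reduced graph $\G$ as a template. Since $x_ix_j$ and $x_{i'}x_{j'}$ lie in the same colour-$c$ component of $\G$, there is a colour-$c$ path in $\G$ from $x_j$ to $x_{i'}$ of length at most $m-1$; prepending $x_i$ and appending $x_{j'}$ yields a colour-$c$ walk $y_0 = x_i, y_1 = x_j, \ldots, y_{\ell-1} = x_{i'}, y_\ell = x_{j'}$ in $\G$ with $\ell \le m+1$. If $\ell < 3$, I would first extend by appending $y_{\ell-1}, y_\ell$ once or twice, keeping $y_{\ell-1} = x_{i'}$ and $y_\ell = x_{j'}$, so that $\ell \ge 3$ and still $\ell + 1 \le 2m$.

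Having fixed this template, I would construct a $v$-$w$-path $u_0 = v, u_1, \ldots, u_\ell = w$ with $u_k \in V(y_k)$ by a forward greedy procedure. The key trick is to pin down a ``landing set'' in the second-to-last cluster beforehand: set $T := N_c(w) \cap V(y_{\ell-1}) \sm W$. Using the typical-degree assumption on $w$ in the pair $(V_{i'},V_{j'}) = (V(y_{\ell-1}),V(y_\ell))$ together with $|W \cap V(y_{\ell-1})| \le (d/2)|V(y_{\ell-1})|$, this gives $|T| \ge (d/2-\eps)|V(y_{\ell-1})| \ge \eps |V(y_{\ell-1})|$.

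For $k = 1, \ldots, \ell-2$, I would choose $u_k$ greedily in $N_c(u_{k-1}) \cap V(y_k) \sm W$ of typical degree in $(V(y_k), V(y_{k+1}))$, and at step $k = \ell-2$ additionally demand that $u_{\ell-2}$ has at least $(d-2\eps)|T|$ colour-$c$ neighbours in $T$; the latter constraint excludes only $\eps|V(y_{\ell-2})|$ vertices by $\eps$-regularity of $(V(y_{\ell-2}),V(y_{\ell-1}))$ applied to the large set $T$. Each step leaves at least $(d-\eps)|V(y_k)| - (d/2)|V(y_k)| - 2\eps|V(y_k)| = (d/2 - 3\eps)|V(y_k)| > 0$ candidates. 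Finally I pick $u_{\ell-1}$ to be any element of $N_c(u_{\ell-2}) \cap T$, a set of size at least $(d-2\eps)|T| \ge (d-2\eps)(d/2 - \eps)|V(y_{\ell-1})| \ge 1$ for $n$ large; the edge $u_{\ell-1}w$ is in colour $c$ since $u_{\ell-1} \in T \subset N_c(w)$. The resulting path has order $\ell + 1 \le 2m$ and avoids $W$ by construction.

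The main obstacle is steering the greedy construction to land on the prescribed vertex $w$: a naive step from $u_{\ell-2}$ into $V(y_{\ell-1})$ only guarantees a neighbour of $u_{\ell-2}$, and intersecting this with $N_c(w) \cap V(y_{\ell-1}) \sm W$ via inclusion--exclusion can produce the empty set when $d < 1/2$. The target-set trick, which uses $\eps$-regularity of $(V(y_{\ell-2}),V(y_{\ell-1}))$ one step before the end to force $u_{\ell-2}$ to see many vertices of $T$, is exactly what bypasses this, and the mild extension of the walk when $\ell \le 2$ guarantees that such an earlier cluster $V(y_{\ell-2})$ always exists.
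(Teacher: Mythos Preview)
Your proof is correct and follows essentially the same approach as the paper: fix a colour-$c$ walk in $\G$ from $x_i$ through $x_j,\ldots,x_{i'}$ to $x_{j'}$ (padded to have length at least a few), then greedily build the path in $G$ along this template, maintaining typical degree into the next cluster at each step, with special handling near the end to hit $w$. The paper phrases the final two steps as ``by regularity there is an edge between the colour-$c$ neighbourhood of $v_{\ell-3}$ in $V_{i_{\ell-2}}$ and the colour-$c$ neighbourhood of $w$ in $V_{i_{\ell-1}}$''; your landing-set $T$ is the same regularity application unfolded. One trivial omission: since your template walk may revisit clusters, you should also exclude the at most $m+O(1)$ previously chosen $u_j$'s and $w$ when selecting each $u_k$; this costs nothing in your candidate counts.
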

		
	\subsection{$b$-matchings}
	
		Adaptations of {\L}uczak's connected matchings method usually proceed by applying the regularity lemma, and finding large matchings in the components of the reduced graph (a matching whose edges belong to the same component is called a \emph{connected matching}). The point is that a connected matching in the reduced graph can easily be converted into a cycle in the original graph.
		
		In our case, it will be more convenient to work with 2-matchings, i.e.\ subgraphs, where each vertex can be incident to at most two edges. These convert to cycles the same way as matchings.
		
		\begin{defn}[Perfect $b$-matching]
			Let $b:V(G)\to \Zp$ be a function on the vertices of a graph $G$. A \emph{perfect $b$-matching} of $G$ is a non-negative function $\om:E(G)\to \Zp$ on the edges, such that $\sum_{w \in N(v)} \om(vw) = b(v)$ for every vertex $v$. When $b$ is the constant 2 function, we call $\om$ a \emph{perfect 2-matching}.
		\end{defn}

		It is easy to see that perfect 2-matchings correspond to vertex-disjoint cycles and edges that cover all the vertices. 
		For example, a perfect matching with weight 2 on each edge is a perfect 2-matching.
		The following analogue of Tutte's theorem is a convenient characterization of graphs that admit a perfect 2-matching (see \cite[Corollary 30.1a]{Sch03}).
		\begin{thm}[Tutte] \label{thm:tutte-2-matching}
			A graph $G$ has a perfect 2-matching if and only if every independent 
			set $S \subset V(G)$ satisfies $|N(S)| \ge |S|$.
		\end{thm}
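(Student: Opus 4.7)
The easy direction is a double-counting argument: given a perfect $2$-matching $\omega$ and an independent set $S\subseteq V(G)$, one has $\sum_{v\in S}\sum_{w\sim v}\omega(vw)=2|S|$, and since $S$ is independent every such $w$ lies in $N(S)$; on the other hand each $w\in N(S)$ absorbs at most $\sum_{v\sim w}\omega(vw)=2$, giving $|N(S)|\ge |S|$.

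For the harder direction I would reduce to bipartite matching via the standard double-cover of $G$. Form a bipartite graph $H$ on vertex set $V_1\cup V_2$, where $V_i=\{v_i:v\in V(G)\}$ is a copy of $V(G)$, and for each edge $uv\in E(G)$ put both $u_1v_2$ and $v_1u_2$ into $E(H)$. Any perfect matching $M$ of $H$ converts into a perfect $2$-matching of $G$ via $\omega(uv)=|\{u_1v_2,v_1u_2\}\cap M|$: each vertex $v\in V(G)$ is covered by exactly one edge of $M$ at $v_1$ and one at $v_2$, so its total incident weight is $2$. Hence it suffices to verify Hall's condition in $H$.

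Given $A\subseteq V_1$, write $A_G=\{v\in V(G):v_1\in A\}$ and let $I$ be the set of isolated vertices of the induced subgraph $G[A_G]$. Then $I$ is independent in $G$, every $G$-neighbour of $I$ lies in $N_G(A_G)$ (since vertices of $I$ have no neighbours inside $A_G$, so a direct count in $H$ splits $N_H(A)$ into the images of $N_G(A_G)$ and of $A_G\setminus I$, yielding $|N_H(A)|=|N_G(A_G)|+|A_G|-|I|$. Applying the hypothesis to the independent set $I$ gives $|N_G(A_G)|\ge|N_G(I)|\ge|I|$, which is precisely $|N_H(A)|\ge|A|$. By Hall's theorem $H$ has a perfect matching, and we are done.

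The only genuinely subtle step is the identification of the right independent set: a failure of Hall's condition in $H$ for a set $A\subseteq V_1$ must be translated into a failure of the neighbourhood condition for some independent set in $G$, and the natural candidate turns out to be the isolated vertices of $G[A_G]$. Everything else is routine bookkeeping once this reduction and this choice of $I$ are in place.
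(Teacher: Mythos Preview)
The paper does not give its own proof of this theorem; it is quoted as a known result with a reference to Schrijver's book \cite[Corollary~30.1a]{Sch03}. So there is nothing to compare against directly.

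Your argument is the standard one and is essentially correct. The necessity direction is fine. For sufficiency, the reduction to Hall's theorem via the bipartite double cover is exactly the textbook route, and your identification of the right independent set --- the isolated vertices $I$ of $G[A_G]$ --- is the key step. The computation $|N_H(A)| = |N_G(A_G)| + |A_G| - |I|$ is correct under the paper's convention $N_G(S) = \bigl(\bigcup_{s\in S} N(s)\bigr)\setminus S$: a vertex $w_2\in V_2$ lies in $N_H(A)$ precisely when $w$ has a $G$-neighbour in $A_G$, and such $w$ are either outside $A_G$ (contributing $|N_G(A_G)|$) or inside $A_G$ but non-isolated there (contributing $|A_G\setminus I|$). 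Since vertices of $I$ have all their $G$-neighbours outside $A_G$, one gets $N_G(I)\subseteq N_G(A_G)$, and the hypothesis applied to $I$ finishes it.

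One cosmetic issue: in your third paragraph you open a parenthesis at ``(since vertices of $I$\ldots'' and never close it, and the sentence structure around ``so a direct count in $H$ splits'' is garbled. The mathematics underneath is sound, but the exposition needs a clean rewrite there.
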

		
		However, we will need stronger conditions so that our graph is guaranteed to have a perfect 2-matching even after slight modifications. 
		
	\subsection{Robustly matchable graphs}
		
		\begin{defn}[$(\mu,\nu)$-\robmat graphs]
			A graph $H$ on $n$ vertices is $(\mu,\nu)$-\robmat if any of the following two conditions holds.
			\begin{enumerate}[\upshape (1)]
			 \item $\delta(H) \ge (1/2-\mu)n$ and every set of $(1/2-\nu)n$ vertices spans at least $\nu n^2$ edges.
			 \item $H$ is a balanced bipartite graph with parts $A,B$ (of size $n/2$) such that\\
			 - $\delta(H) \ge (1/32-\mu)n$, and\\
			 - all but at most $(1/64+\mu)n$ vertices in $H$ have degree at least $(1/3-\mu)n$.
			\end{enumerate}
			We will distinguish \robmat graphs of type 1 and type 2 accordingly.
		\end{defn}
		
		Note that every $(\mu',\nu')$-\robmat graph with $\mu'\le \mu$ and $\nu'\ge \nu$ is automatically $(\mu,\nu)$-\robmat, as well.
				
		The following claim explains why we call these graphs ``2-matchable''.
		\begin{lem} \label{lem:2match}
			Every $(\mu,\nu)$-\robmat graph $H$ with $\mu\le \nu< {1}/{1000}$ contains a perfect 2-matching.\footnote{Strictly speaking, it would be enough to bound $\mu$ from above by 1/1000: monotonicity implies that $H$ is also $(\mu,\mu)$-\robmat. We impose the upper bound on $\nu$ purely to keep our parameter hierarchy simpler.}
		\end{lem}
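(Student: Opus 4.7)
My plan is to apply Tutte's 2-matching theorem (\Cref{thm:tutte-2-matching}): it suffices to verify that $|N(S)| \ge |S|$ for every independent set $S \subseteq V(H)$. I would fix such an $S$ and split into cases according to whether $H$ is of type~1 or type~2.

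For type~1, the ``many edges'' hypothesis forces $|S| < (1/2 - \nu)n$: otherwise any subset $S' \subseteq S$ of size $\lceil (1/2 - \nu) n \rceil$ would span at least $\nu n^2$ edges, contradicting the independence of $S$. Since $\mu \le \nu$, picking any $v \in S$ (if $S$ is nonempty), all $\ge (1/2 - \mu)n$ neighbours of $v$ lie in $V(H) \setminus S$, so $|N(S)| \ge (1/2 - \mu)n \ge |S|$.

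For type~2, write $S = S_A \cup S_B$ with $S_A \subseteq A$ and $S_B \subseteq B$. Since $H$ is bipartite, $N(S_A) \subseteq B$ and $N(S_B) \subseteq A$ are disjoint, and the independence of $S$ ensures that they are disjoint from $S$, so $|N(S)| = |N(S_A)| + |N(S_B)|$. It therefore suffices to prove the Hall-type bound $|N(T)| \ge |T|$ for every $T \subseteq A$ (a symmetric argument handles $T \subseteq B$). Let $L$ be the set of vertices of $H$ with degree at least $(1/3 - \mu)n$, so $|V(H) \setminus L| \le (1/64 + \mu)n$, which is strictly smaller than $(1/32 - \mu)n$ since $\mu \le \nu < 1/1000$. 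I would handle $T$ in four ranges: (i) if $|T| \le (1/32 - \mu)n$, a single vertex of $T$ already supplies enough neighbours via the minimum degree; (ii) if $(1/32 - \mu)n < |T| \le (1/3 - \mu)n$, then $A \setminus L$ is too small to contain all of $T$, so $T$ meets $L$ and $|N(T)| \ge (1/3 - \mu)n$; (iii) if $(1/3 - \mu)n < |T| \le (31/64 - \mu)n$, any $v \in B \setminus N(T)$ satisfies $\deg(v) \le n/2 - |T| < (1/3 - \mu)n$, placing it in $V(H) \setminus L$, so $|N(T)| \ge n/2 - (1/64 + \mu)n = (31/64 - \mu)n \ge |T|$; and (iv) if $|T| > (31/64 - \mu)n$, the same calculation gives $\deg(v) < (1/32 - \mu)n \le \delta(H)$ for any $v \in B \setminus N(T)$, so $B \setminus N(T) = \emptyset$ and $|N(T)| = n/2 \ge |T|$.

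The main obstacle is the bipartite type~2 case, where the global minimum degree and the ``most vertices have larger degree'' condition must be combined via a careful case analysis across four intervals for $|T|$. The bound $\nu < 1/1000$ provides enough slack for each of the required numerical inequalities between the thresholds $(1/32 - \mu)n$, $(1/3 - \mu)n$, $(31/64 - \mu)n$, and $n/2$ to hold with room to spare.
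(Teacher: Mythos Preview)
Your proposal is correct. The type~1 argument is essentially identical to the paper's.

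For type~2, the paper takes a slightly different (and shorter) route: instead of verifying Tutte's condition $|N(S)|\ge |S|$ via separate Hall-type bounds on each side, it invokes K\H{o}nig's theorem and only checks that every independent set has size at most $n/2$. Assuming $|S_A|\le |S_B|$, the paper distinguishes just two cases. If $|S_A|\le (1/64+\mu)n$, then $|N(S_A)|\ge \delta(H)\ge |S_A|$, so $|S|\le |N(S_A)|+|S_B|\le |B|=n/2$. If $|S_A|>(1/64+\mu)n$, then both $S_A$ and $S_B$ meet the large-degree set, whence $|N(S_A)|,|N(S_B)|\ge n/4$ and $|S|\le n-|N(S_A)|-|N(S_B)|\le n/2$. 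Your four-interval analysis of $|T|$ achieves the same end with a bit more bookkeeping; the paper's two-case argument exploits the symmetry between $S_A$ and $S_B$ to avoid the intermediate thresholds $(1/3-\mu)n$ and $(31/64-\mu)n$.
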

		\begin{proof}
			If $H$ is a type 1 \robmat graph, then for every non-empty independent set $S$, we have
			\[ |S| \le  (1/2-\nu)n \le (1/2-\mu)n \le \delta(H) \le |N(S)| \]
			(using the independence of $S$ in the first and last step), so $H$ satisfies the conditions of \Cref{thm:tutte-2-matching}.
			
			Now suppose $H$ is of the second type with bipartition $V(H)=A\cup B$. By K\H{o}nig's theorem, it is enough to check that every independent set has size at most $n/2$. Indeed, this would guarantee the existence of a perfect matching, and hence a perfect 2-matching. So let $S$ be an independent set in $H$, and let $S_A=S\cap A$ and $S_B=S\cap B$. We may assume that $|S_A|\le |S_B|$, and note that $N(S_A)\subset B \setminus S_B$.
			
			If $1\le |S_A|\le ({1}/{64}+\mu)n$, then $|N(S_A)|\ge \delta(H)\ge ({1}/{32}-\mu)n \ge ({1}/{64}+\mu)n\ge |S_A|$. 
			(If $S_A$ is empty, then trivially $|N(S_A)|\ge |S_A|$.)
			So $|S|=|S_A|+|S_B|\le |N(S_A)| + |S_B|\le |B|=  n/2$. On the other hand, if $|S_A|>({1}/{64}+\mu)n$, then there is a vertex $v\in S_A$ of degree at least $(1/3-\mu)n\ge  n/4$, so $|N(S_A)|\ge  n/4$. As $|S_B|\ge |S_A|> (1/64 + \mu)n$, we similarly get $|N(S_B)|\ge  n/4$. But then $|S|=|S_A|+|S_B|\le n- |N(S_A)|-|N(S_B)|\le  n/2$, as needed. 
		\end{proof}
		
		The next two statements illustrate the robustness of the above definition.
		
		\begin{lem} \label{cla:robsub}
			Suppose $H$ is a $(\mu,\nu)$-\robmat graph on $n$ vertices and let $\eps>0$. 
			Suppose $H'$ is a spanning subgraph of $H$ such that $\deg_{H'}(v)\ge \deg_H(v)-\eps n$ for every vertex $v$. 	
			Then $H'$ is $(\mu+\eps,\nu-\eps)$-\robmat whose type coincides with that of $H$.
		\end{lem}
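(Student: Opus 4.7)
The plan is a direct monotonicity check: every defining condition of $(\mu,\nu)$-\robmat is phrased in terms of vertex degrees or the number of edges in a set, and each such quantity decreases by only a controlled amount when we pass from $H$ to its spanning subgraph $H'$ under the hypothesis $\deg_{H'}(v) \geq \deg_H(v) - \varepsilon n$. I would split into cases according to the type of $H$ and verify the defining inequalities one by one with the perturbed parameters $(\mu+\varepsilon,\nu-\varepsilon)$. The ``same type'' assertion is essentially automatic: the type is determined by whether $H$ is bipartite, and $H'$ inherits any bipartition of $H$ as a spanning subgraph, with the two parts still having size $n/2$ in the type 2 case.

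For a type 1 graph, the minimum-degree bound $\delta(H') \geq (1/2-(\mu+\varepsilon))n$ is immediate from the degree hypothesis. The only substantive step is the edge-count condition: given any set $S$ of $(1/2-(\nu-\varepsilon))n$ vertices in $H'$, I would pass to a subset $S_0 \subseteq S$ of size $(1/2-\nu)n$, use the hypothesis on $H$ to get $e_H(S_0) \geq \nu n^2$, and bound the number of edges destroyed on $S_0$ when passing to $H'$ by $\tfrac{1}{2}|S_0|\varepsilon n \leq \varepsilon n^2/2$. This yields $e_{H'}(S) \geq e_{H'}(S_0) \geq (\nu-\varepsilon)n^2$, as required.

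For a type 2 graph, both quantitative conditions are again routine: $\delta(H') \geq (1/32-(\mu+\varepsilon))n$ follows directly, and every vertex $v$ with $\deg_H(v) \geq (1/3-\mu)n$ satisfies $\deg_{H'}(v) \geq (1/3-(\mu+\varepsilon))n$. Hence the set of vertices in $H'$ that violate the $(1/3-(\mu+\varepsilon))n$-degree condition is contained in the corresponding exceptional set in $H$, and the latter has size at most $(1/64+\mu)n \leq (1/64+(\mu+\varepsilon))n$.

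I do not anticipate any real obstacle here; the only line with any content is the edge-count estimate in the type 1 case, which is a one-line double-counting argument.
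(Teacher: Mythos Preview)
Your proposal is correct and follows essentially the same route as the paper's proof: case-split on the type, push the degree hypothesis through each defining inequality, and for the type~1 edge-count condition pass to a subset of size $(1/2-\nu)n$ and use monotonicity. The only cosmetic difference is that the paper bounds the total number of edges lost by the cruder $\eps n^2$ rather than your $\tfrac12|S_0|\eps n$, but both suffice.
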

		\begin{proof}
			If $H$ is a type 1 \robmat graph, then $\delta(H')\ge \delta(H)-\eps n\ge (1/2-(\mu+\eps))n$, as needed. Also, $H'$ loses at most $\eps n^2$ edges compared to $H$, so every set of $(1/2-\nu)n$ vertices spans at least $(\nu-\eps) n^2$ edges. In particular, the same holds for every set of $(1/2-(\nu-\eps))n$ vertices.
		
			On the other hand, if $H$ is of the second type, then we similarly get $\delta(H')\ge (1/32-(\mu+\eps))n$, as well as $\deg_{H'}(v)\ge (1/3-(\mu+\eps))n$ for all but $(1/64+\mu)n$ vertices $v$.
		\end{proof}
		
		\begin{lem} \label{cla:redrobust}
			Suppose $H$ is an $r$-edge-coloured $(\mu,\nu)$-\robmat graph on $n$ vertices.
			Let $\HH$ be the $(\eps,d)$-reduced graph of $H$ obtained from \Cref{lem:regularity} with some parameters $\eps,d>0$ and $\ell=2$ (and the corresponding bipartition if $H$ is of type~2).
			
			Then $\HH$ is $(\mu+rd+2\eps,\nu-rd-2\eps)$-\robmat.
			Moreover, the type of $\HH$ coincides with the type of $H$.
		\end{lem}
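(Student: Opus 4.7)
The plan is to deduce each of the two required properties directly from \Cref{prop:reducedgraph}, treating the two types separately. For the type~2 case, observe first that since we applied \Cref{lem:regularity} with $\ell=2$ and the bipartition of $H$ as the prepartition, the reduced graph $\HH$ is automatically balanced bipartite, so we only need to verify the degree conditions.

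For the type~1 case, the minimum degree bound $\delta(\HH)\ge (1/2-(\mu+rd+2\eps))m$ follows immediately by applying \Cref{prop:reducedgraph}\eqref{itm:reducedgraph-a} to every vertex of $\HH$ with $c=1/2-\mu$. For the density condition, I would take an arbitrary set $T\subset V(\HH)$ with $|T|\ge (1/2-(\nu-rd-2\eps))m$, set $S=\bigcup_{x_i\in T}V_i$, and use that the common cluster size $k=|V_i|$ satisfies $k m\ge (1-\eps)n$. A short calculation then shows $|S|=k|T|\ge (1/2-\nu)n$, where the slack of $rd+2\eps$ in the definition of the ``new'' $\nu$ is precisely what is needed to absorb the factor $(1-\eps)$ coming from the exceptional set $V_0$. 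The type~1 \robmat hypothesis on $H$ then guarantees that $S$ spans at least $\nu n^2$ edges of $H$, and \Cref{prop:reducedgraph}\eqref{itm:reducedgraph-c} converts this into at least $(\nu-rd-\eps)m^2\ge (\nu-rd-2\eps)m^2$ edges in $\HH$ spanned by $T$, as required.

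For the type~2 case, \Cref{prop:reducedgraph}\eqref{itm:reducedgraph-a} applied to every vertex gives $\delta(\HH)\ge (1/32-\mu-rd-\eps)m\ge (1/32-(\mu+rd+2\eps))m$. For the second bullet of the definition, I would apply \Cref{prop:reducedgraph}\eqref{itm:reducedgraph-b} with $c=1/3-\mu$ and $\eta=1/64+\mu$: this yields that all but at most $(1/64+\mu+\eps)m\le (1/64+(\mu+rd+2\eps))m$ vertices of $\HH$ have degree at least $(1/3-\mu-rd-\eps)m\ge (1/3-(\mu+rd+2\eps))m$, which is exactly what is needed.

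There is no real obstacle here; the lemma is essentially a bookkeeping exercise. The only mild subtlety is the counting step in the type~1 density condition, where one needs to verify that inflating $\mu$ by $rd+2\eps$ and shrinking $\nu$ by the same amount really does suffice to cover both the $V_0$-loss in passing from $T$ to $S$ and the $rd+\eps$-loss in \Cref{prop:reducedgraph}\eqref{itm:reducedgraph-c}; the chosen buffer is comfortably large enough for both.
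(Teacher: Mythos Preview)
Your proposal is correct and follows essentially the same approach as the paper: both arguments apply \Cref{prop:reducedgraph}\ref{itm:reducedgraph-a} and \ref{itm:reducedgraph-c} for type~1 and \ref{itm:reducedgraph-a} and \ref{itm:reducedgraph-b} for type~2. If anything, your treatment of the type~1 density condition is slightly more explicit than the paper's, which simply asserts that every set of $(1/2-\nu)m$ vertices in $\HH$ spans at least $(\nu-rd-\eps)m^2$ edges without spelling out the $V_0$ accounting.
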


		\begin{proof}
			Suppose that $\HH$ has $m$ vertices.
			If $H$ is a \robmat graph of the first type, then \Cref{prop:reducedgraph}\ref{itm:reducedgraph-a} guarantees that $\delta(\HH)\ge (1/2-\mu-rd-\eps)m$, and \Cref{prop:reducedgraph}\ref{itm:reducedgraph-c} implies that every set of $(1/2-\nu)m$ vertices spans at least $(\nu-rd-\eps)m^2$ edges.

			Now suppose that $H$ is of type~2 with bipartition $\{A,B\}$.
			Then $\HH$ is balanced bipartite, as well.
			By~\Cref{prop:reducedgraph}\ref{itm:reducedgraph-a}, we have $\delta(\HH) \ge (1/32-\mu-rd-\eps)m$.
			By~\Cref{prop:reducedgraph}\ref{itm:reducedgraph-b}, we have $\deg_{\HH}(x) \ge (1/3-\mu-rd-\eps)m$ for all but at most $(1/64+\mu+\eps)m$ vertices $x \in V(\HH)$.
		\end{proof}

		We will also need the following lemma, which provides sufficient conditions for the existence of $b$-matchings in a graph. 

		\begin{lem} \label{lem:rob-b-matching}
			Let $t,\gamma$ be constants, and let $H$ be a $(\mu,\nu)$-\robmat graph on $m$ vertices such that ${m}/{t} \le \gamma \le \mu \le {\nu}/{4} < {1}/{4000} $. 
			Then $H$ has a perfect $b$-matching for every function $b : V(H) \to \Zp$ such that 
			\begin{enumerate}[\upshape (a)]
				\item 
					$(1 - \gamma) \bnd \le b(x) \le \bnd$ for every $x\in V(H)$,
				\item 
					$\sum_{x\in \ccompt} b(x)$ is even for every component $\ccompt$ of $H$, and
				\item 
					if $H$ is of type~2 with bipartition $\{X,Y\}$, then $\sum_{x \in X} b(x) = \sum_{y \in Y} b(y)$.
			\end{enumerate}
		\end{lem}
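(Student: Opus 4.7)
The plan is to verify, in each of the two cases of the \robmat definition, an appropriate Tutte/Hall-type characterisation for the existence of a perfect $b$-matching in $H$. The key observation enabling both cases is that $b(x) \ge (1 - \gamma)t \ge (1-\gamma)m/\gamma$ is much larger than $m$ (since $\gamma < 1/4000$), so the $b$-budget of any single vertex already dwarfs any combinatorial obstruction that involves only $O(m)$ components of a subgraph of $H$.

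For the type~1 case, I would appeal to the Tutte-type theorem for $b$-matchings, which states that $H$ admits a perfect $b$-matching if and only if $b(U) \ge q_b(H-U)$ for every $U \subseteq V(H)$, where $q_b(H-U)$ counts the ``obstructing'' components of $H-U$ (those with odd $b$-sum, or bipartite components with unequal $b$-sum on the two sides). The case $U = \emptyset$ is handled by condition~(b), together with the observation that the assumption $\delta(H) \ge (1/2 - \mu)m$ makes $H$ both connected and non-bipartite (any two vertices share a common neighbour, so every edge lies in a triangle). For $|U| \ge 1$, the inequality is immediate from $b(U) \ge (1-\gamma)m/\gamma \gg m \ge q_b(H-U)$.

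For the type~2 case, $H$ is bipartite with parts $A, B$, and I would use the bipartite Hall-type condition: $H$ has a perfect $b$-matching if and only if $b(A) = b(B)$ (given by condition~(c)) and $b(N(S)) \ge b(S)$ for every $S \subseteq A$ (and symmetrically for $S \subseteq B$). Hall's condition would be verified by a case analysis on $|S|$. For $|S| \le (1/64 + \mu)m$, the bound $\delta(H) \ge (1/32 - \mu)m$ yields $|N(S)| \ge (1/32 - \mu)m$, which is roughly $2|S|$. For $(1/64 + \mu)m < |S| \le (1/6 + \mu)m$, some vertex in $S$ has degree at least $(1/3 - \mu)m$ by the high-degree-mass condition, giving $|N(S)| \ge (1/3 - \mu)m$. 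For larger $|S|$, any $v \in B \setminus N(S)$ has all its neighbours in $A \setminus S$ and so has degree less than $(1/3 - \mu)m$; such low-degree vertices number at most $(1/64 + \mu)m$, whence $|N(S)| \ge (31/64 - \mu)m$. Finally, if $|S| > (15/32 + \mu)m$, then $A \setminus S$ is too small to contain all the $\ge (1/32 - \mu)m$ neighbours of any $v \in B$, so $N(S) = B$ and the inequality follows at once from $b(A) = b(B)$. In every range, $|N(S)|/|S|$ comfortably exceeds $1/(1-\gamma)$, which combined with condition~(a) delivers the required Hall inequality.

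The main obstacle is navigating the medium range of $|S|$ in the type~2 analysis: neither the base minimum-degree bound $\delta(H) \ge (1/32 - \mu)m$ nor the high-degree-mass condition alone suffices there, and one must combine them carefully, occasionally switching to the complement $A \setminus S$ via the balance $b(A) = b(B)$, in order to absorb the $(1-\gamma)$ slack between the lower and upper bounds on $b$ in condition~(a). Once these characterisations are in place, everything else is routine.
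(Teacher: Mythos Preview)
Your approach differs substantially from the paper's: instead of reducing to the perfect $2$-matching problem via a blow-up, you try to verify Tutte/Hall-type defect conditions for $b$-matchings directly.

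For type~2 this works. The bipartite characterisation you use --- $b(A)=b(B)$ together with $b(S)\le b(N(S))$ for all $S\subseteq A$ --- is just the feasibility condition for an integer transportation problem, and your case analysis on $|S|$ goes through with the stated parameter bounds.

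For type~1, however, there is a genuine gap: the ``Tutte-type theorem for $b$-matchings'' you invoke is not a correct characterisation. Take the book graph on vertices $u,v,w_1,\dots,w_4$ with edge set $\{uv\}\cup\{uw_i,vw_i:i\in[4]\}$ and put $b\equiv 2$. There is no perfect $b$-matching, since $\{w_1,\dots,w_4\}$ is an independent set with only two neighbours, violating \Cref{thm:tutte-2-matching}. Yet your condition $b(U)\ge q_b(H-U)$ holds for every $U$: the tightest case is $U=\{u,v\}$, where $b(U)=4$ and $H-U$ consists of four isolated vertices, so $q_b=4$; one checks easily that every other $U$ also satisfies the inequality. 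The underlying issue is that comparing $b(U)$ to a mere \emph{count} of defective components discards the magnitudes of the $b$-values inside those components, so a weighted Hall violation $b(S)>b(N(S))$ with $|S|$ and $|N(S)|$ both small goes undetected.

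The paper's proof avoids any general $b$-matching criterion. It first fixes parity by routing a path between each pair of odd-$b$ vertices within a component, obtaining an even function $b_1$ with $(1-2\gamma)t\le b_1\le t$. It then replaces each vertex $x$ by an independent set $W(x)$ of size $b_1(x)/2$, joining $W(x)$ to $W(y)$ completely whenever $xy\in E(H)$, and verifies that the resulting graph $H'$ is again \robmat of the same type as $H$. Now \Cref{lem:2match} (hence \Cref{thm:tutte-2-matching}) yields a perfect $2$-matching in $H'$, which pulls back to a perfect $b_1$-matching in $H$; adding back the parity paths gives the desired perfect $b$-matching. Everything stays inside the $2$-matching framework already set up in the paper.
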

		
		\begin{proof}
			As $\sum_{x\in \ccompt} b(x)$ is even in every component $\ccompt$, we can pair up the vertices with odd $b(x)$ within each component. 
			Consider a family $\PP$ that contains one path in $H$ between each such pair
			and let $\om_0 : E(H) \to \Zp$ be the function for which $\om_0(e)$ is the number of paths in $\PP$ containing $e$. Then it is easy to see that $b_0(x) = \sum_{y \in N(x)} \om_0(xy)$ is odd if and only if $b(x)$ is odd, so $b_1(x)=b(x)-b_0(x)$ is even for every $x$. Then for every vertex $x$,
			\[ (1-2\gamma)\bnd \le (1-\gamma)\bnd - m \le b_1(x) \le \bnd, \]
			and if $H$ is of type~2, then we also have 
			\[ 
				\sum_{x \in X} b_1(x) = \sum_{x \in X} b(x) - \sum_{e\in E(H)}\om_0(e) = \sum_{y \in Y} b(y) - \sum_{e\in E(H)}\om_0(e) = \sum_{y \in Y} b_1(y). 
			\]
				
			Let $H'$ denote the graph obtained by replacing each vertex $x$ by a set $W(x)$ of size $b_1(x)/2$ and replacing each edge $xy$ by a complete bipartite graph with bipartition $W(x) \cup W(y)$. Then $H'$ has $n=\sum_{x\in V(H)} b_1(x)/2$ vertices, and $(1-2\gamma)\bnd m/2 \le n\le \bnd m/2$. We will show that $H'$ has a perfect 2-matching $\om'$. Then $\om_1(xy) = \sum_{x'\in W(x), y'\in W(y)} \om'(x'y')$ is a perfect $b_1$-matching in $H$, and hence $\om(xy) = \om_0(xy) +\om_1(xy)$ is a perfect $b$-matching in $H$.

			\medskip
			Let us first consider the case when $H$ is a \robmat graph of type 1. As $\delta(H)\ge (1/2-\mu)m$, we readily get $\delta(H')\ge (1/2-\mu)(1-2\gamma)m\bnd/2\ge (1/2-\mu-\gamma)n$. As in the proof of \Cref{lem:2match}, it is enough to show that every independent set in $H'$ has size at most $(1/2-\mu-\gamma)n$, because then $|N_{H'}(S)|\ge |S|$ holds for every independent $S$, and we can apply \Cref{thm:tutte-2-matching} to get a perfect 2-matching.
			
			So take any independent set $S$ in $H'$, and observe that if $u\in S\cap W(x)$ and $v\in S\cap W(y)$ for some $x,y$ in $H$, then $xy$ is not an edge of $H$ (otherwise $v$ and $w$ are adjacent in $H'$). So $S\subset \bigcup_{x \in U}W(x)$ for some independent set $U$ in $H$. Since $H$ is of type 1, we have $|U|\le (1/2-\nu)m$.
			Thus
			\[ |S|\le (1/2-\nu)m\bnd/2 \le \frac{1/2-\nu}{1-2\gamma} n \le (1/2-\nu+2\gamma)n \le (1/2-\mu-\gamma)n, \]
			as needed. 
			
			Now suppose that $H$ is of type 2. In this case, $\sum_{x \in X} b_1(x) = \sum_{y \in Y} b_1(y)$ guarantees that $H'$ is also balanced bipartite. Also, $\delta(H)\ge (1/32-\mu)m$ implies $\delta(H')\ge (1/32-\mu-\gamma)n$ as before. Moreover, for every vertex $x$ of degree at least $(1/3-\mu)m$ in $H$, we get that every vertex in $W(x)$ has degree at least $(1/3-\mu-\gamma)n$ in $H'$. As there are at most $(1/64+\mu)\bnd m/2<(1/64+\mu+\gamma)n$ exceptions, we see that $H'$ is $(\mu+\gamma,\nu)$-\robmat. In particular, by \Cref{lem:2match}, it has a perfect 2-matching.
		\end{proof}

	\subsection{Cycle covers in unbalanced bipartite graphs}
	
		Another tool we need is the following variant of a lemma of Erd\H{o}s, Gy\'arf\'as and Pyber~\cite{EGP91}. It finds a monochromatic cycle cover of the smaller part of an unbalanced bipartite graph if this part has large minimum degree.

		\begin{lem}[Erd\H{o}s--Gy\'arf\'as--Pyber~\cite{EGP91}]\label{lem:EGP}
			Let $H$ be an $r$-coloured bipartite graph with bipartition
			$\{A,B\}$. Suppose that $|A| \ge 100^3 r^3 |B|$ and that
			every vertex in $B$ has at least $|A|/100$ neighbours in $A$.
			Then there are $100r^2$ monochromatic pairwise vertex-disjoint
			proper cycles and edges that together cover all vertices of $B$.
		\end{lem}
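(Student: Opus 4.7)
The plan is to split $B$ into $r$ classes according to a \emph{majority colour} and cover each class using at most $100r$ monochromatic objects in that colour, giving a total of $100r^2$ objects. For each $b \in B$, pigeonhole applied to its $\ge |A|/100$ neighbours across the $r$ colours produces a colour $c(b)$ with $|N_{c(b)}(b)| \ge |A|/(100r)$. Setting $B_c = \{b \in B : c(b) = c\}$ yields the partition $B = \bigsqcup_c B_c$.

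Fix a colour $c$ and look at the bipartite subgraph $H_c[A, B_c]$. Since each $b \in B_c$ has at least $|A|/(100r)$ $c$-neighbours in $A$ and these all lie in the same component of $H_c[A, B_c]$ as $b$, every component of $H_c[A, B_c]$ meeting $B_c$ contains at least $|A|/(100r)$ vertices of $A$. Consequently there are at most $100r$ such components $T_1, \dots, T_s$. For each component $T = T_i$, write $A^T = T \cap A$ and $B^T = T \cap B_c$. If $|B^T| = 0$ there is nothing to do; if $|B^T| = 1$, I take a single edge from the unique vertex of $B^T$ to any of its $A^T$-neighbours; and if $|B^T| \ge 2$, I aim to cover $B^T$ by one proper cycle in $H_c$.

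To build such a cycle, I apply Hall's theorem to $H_c[A^T, B^T]$: since each $b \in B^T$ has at least $|A|/(100r) \ge |B| \ge |B^T|$ neighbours in $A^T$, a matching $M = \{a_j b_j : j \in [k]\}$ saturating $B^T$ exists, where $k = |B^T|$. I then link the matching into a single cycle
\begin{equation*}
b_1 - a_1 - P_1 - b_2 - a_2 - P_2 - \cdots - b_k - a_k - P_k - b_1,
\end{equation*}
where each $P_j$ is a (possibly empty) path in $T$ from $a_j$ to $b_{j+1}$ (indices modulo $k$), internally disjoint from all other matching vertices and from the earlier $P_{j'}$'s. The immense ratio $|A^T|/|B^T| \ge 100^2 r^2$ combined with the connectedness of $T$ should give more than enough freedom to route all $P_j$'s simultaneously.

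When the cycle construction fails in some component $T$, I fall back to covering $B^T$ by $|B^T|$ edges, one per vertex. The key observation is that such a failure forces the vertices of $B^T$ to have essentially pairwise disjoint $A^T$-neighbourhoods, which in turn forces $|A^T|$ to be at least roughly $|B^T|\cdot |A|/(100r)$. Testing $\sum_T |A^T| \le |A|$ with this bound shows that the number of ``good'' components (contributing one cycle each) plus the total $|B^T|$ over ``bad'' components (contributing one edge each) is at most $100r$. Hence each colour uses at most $100r$ objects, and summing over the $r$ colours produces at most $100r^2$ in total. The main obstacle will be the cycle-construction step: one must order the matching edges carefully and greedily choose the connecting paths $P_j$ while preserving vertex-disjointness, exploiting the enormous $A$-reservoir and the connectedness of $T$; a secondary difficulty is making the accounting for ``bad'' components precise enough to give the $100r$ bound per colour.
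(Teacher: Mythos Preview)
Your majority-colour partition $B=\bigsqcup_c B_c$ is the same first move the paper makes, but from there the two arguments diverge, and your route has a real gap.

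The problem is in the cycle-building step. You want to thread paths $P_j$ inside the component $T$ from $a_j\in A^T$ to $b_{j+1}\in B^T$, using only the ``immense $A$-reservoir'' for internal vertices. But $T$ is bipartite with sides $A^T$ and $B^T$, so every path in $T$ alternates sides; a path from $a_j$ to $b_{j+1}$ of length at least $3$ must pass through an internal vertex of $B^T$. Since $B^T=\{b_1,\dots,b_k\}$ exactly, any such internal $B$-vertex is one of the $b_i$'s you have already placed on the cycle, contradicting the required internal disjointness. Hence every $P_j$ is forced to be the single edge $a_jb_{j+1}$, and what you actually need is a cyclic ordering of $B^T$ together with a system of \emph{distinct} representatives $a_j\in N_c(b_j)\cap N_c(b_{j+1})$. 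That is a Hamiltonicity statement about the auxiliary graph on $B^T$ in which two vertices are joined when they have (enough) common $c$-neighbours in $A$; connectedness of $T$ and the size of $A^T$ alone do not give this. Your fallback (``failure forces essentially disjoint neighbourhoods'') is also not justified: a non-Hamiltonian auxiliary graph can have many overlapping neighbourhoods, so the accounting that is supposed to cap the number of edges at $100r$ per colour does not go through.

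The paper sidesteps all of this by working directly with the auxiliary graph. On $B_i$ it joins $x,y$ whenever $|N_i(x)\cap N_i(y)|\ge |A|/(100^3r^3)$, shows by a simple inclusion--exclusion count that this graph has independence number at most $100r$, and then invokes P\'osa's theorem to cover $B_i$ by at most $100r$ vertex-disjoint cycles in the auxiliary graph. Each auxiliary edge $xy$ is then replaced by a length-$2$ path $xwy$ with $w\in N_i(x)\cap N_i(y)$, and the hypothesis $|A|\ge 100^3r^3|B|$ guarantees enough room to choose all the $w$'s distinct. The missing idea in your attempt is precisely this: bound the independence number of the common-neighbour auxiliary graph and use P\'osa, rather than trying to route long paths through a bipartite component.
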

		
		We give a short proof of this lemma for completeness in \Cref{appendix:proofs-prelims}, although our argument is nearly identical to the original proof. 
		
	\subsection{Random sampling}

		The following lemma is a well-known Chernoff-type bound on the tail of the binomial distribution (see e.g. \cite[Theorem 2.1]{JLR01}).

		\begin{lem}[Chernoff bound]\label{lem:che}
			Let $X \sim \Bin(n,p)$ be a binomial random variable. Then the following bounds hold for every $0 \le a \le 1$.
			\begin{itemize}
				\item $\Pr\left[X<(1-a)np\right] \le e^{-a^2np/2}$, and
				\item $\Pr\left[X>(1+a)np\right] \le e^{-a^2np/3}$.
			\end{itemize}
		\end{lem}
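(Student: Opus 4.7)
The plan is to use the standard Bernstein/Chernoff moment-generating-function argument. Write $X = X_1 + \dots + X_n$ where the $X_i$ are independent Bernoulli$(p)$ random variables. The moment generating function is
\[
 \mathbb{E}[e^{tX}] = \prod_{i=1}^n \mathbb{E}[e^{tX_i}] = (1 - p + p e^t)^n \le \exp\bigl(np(e^t - 1)\bigr),
\]
using $1 + x \le e^x$. This is the only probabilistic input; the rest is optimization.

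For the upper tail, I would apply Markov's inequality to $e^{tX}$ for $t > 0$:
\[
 \Pr[X > (1+a)np] = \Pr[e^{tX} > e^{t(1+a)np}] \le \exp\bigl(np(e^t - 1) - t(1+a)np\bigr).
\]
Choosing $t = \ln(1+a)$ (the optimizer) gives the bound $\exp\bigl(-np\,\varphi_+(a)\bigr)$ where $\varphi_+(a) = (1+a)\ln(1+a) - a$. The remaining task is the elementary calculus inequality $\varphi_+(a) \ge a^2/3$ for $0 \le a \le 1$, which follows by comparing derivatives at $0$ and checking that $\varphi_+(a) - a^2/3$ is non-decreasing on $[0,1]$.

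For the lower tail, I would take $t < 0$ and apply Markov's inequality in the form $\Pr[X < (1-a)np] = \Pr[e^{tX} > e^{t(1-a)np}]$, and choose $t = \ln(1-a)$ (handling $a=1$ as a trivial boundary case). This yields the bound $\exp\bigl(-np\,\varphi_-(a)\bigr)$ with $\varphi_-(a) = (1-a)\ln(1-a) + a$, and the needed estimate $\varphi_-(a) \ge a^2/2$ on $[0,1]$ again follows from a routine derivative comparison: $\varphi_-(0) = 0$, $\varphi_-'(a) = -\ln(1-a)$, and $-\ln(1-a) \ge a$.

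The only ``obstacle'' is really the choice of which calculus inequality to use; since the lemma's exponents $a^2/2$ and $a^2/3$ match the textbook bounds exactly, I expect no difficulty. In fact, since this is a completely standard Chernoff estimate, the cleanest option is to simply cite \cite{JLR01} (as the statement already does) and omit the calculation altogether.
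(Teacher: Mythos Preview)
Your proposal is correct, and in fact the paper does not prove this lemma at all: it simply states it and cites \cite[Theorem 2.1]{JLR01}. Your final suggestion to cite the reference and omit the calculation is exactly what the paper does.
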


		In our proof, we will need a small set of vertices that contains many neighbours of every large-degree vertex. As shown by the next result, a randomly chosen set satisfies these properties. The proof is a routine application of Chernoff-type bounds; see \Cref{appendix:proofs-prelims}.
		
		\begin{prop} \label{prop:sample}
			Let $G$ be a graph on $n$ vertices with $(\eps,d)$-regular partition $\{V_0,\dots,V_m\}$ as provided by \Cref{lem:regularity}. Also, let $p$ be a positive parameter, and let $B\subset V=V(G)$ be a vertex set satisfying $V_0\subset B$ and $|B\cap V_i|\le 10p|V_i|$ for every $i\in[m]$. If ${m\log n}/{\sqrt{n}}<p< {1}/{100}$ and $\eps< {1}/{10}$, then there is a set $A\subset V\sm B$ with the following properties.
			\begin{enumerate}[\upshape (a)]
				\item $|A|\ge  (p/2)n$,
				\item \label{itm:sample-b}
					$|A\cap V_i|\le 2p|V_i|$ for every $i\in[m]$,
				\item $\deg(v,A\cap V_i)\ge (p/2)\deg(v,V_i)$ for every $v\in V$ and $i\in[m]$ with $\deg(v,V_i)>30p|V_i|$,
				\item $\deg(v,A)\ge  {|A|}/100$ for every vertex $v\in V$ with $\deg(v,V\sm B)> {n}/{40}$.
			\end{enumerate}
		\end{prop}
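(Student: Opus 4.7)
The plan is to construct $A$ by retaining each vertex of $V\sm B$ independently with probability $p$, and then verify each of (a)--(d) via a Chernoff bound (\Cref{lem:che}), combined by a union bound at the end.

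For (a), since $V_0\subset B$ and $|B\cap V_i|\le 10p|V_i|\le |V_i|/10$, we have $|V\sm B|\ge 0.9n$, so $\mathbb{E}|A|\ge 0.9\,pn$ and the lower-tail bound gives $|A|\ge pn/2$ with probability $1-e^{-\Omega(pn)}$. Property (b) is analogous: for each $i\in[m]$, $\mathbb{E}|A\cap V_i|=p|V_i\sm B|\le p|V_i|$, and the upper-tail bound yields $|A\cap V_i|\le 2p|V_i|$ with probability $1-e^{-\Omega(p|V_i|)}$; a union bound over $i$ handles all $m$ clusters. The one small observation for (c) is that the hypothesis $\deg(v,V_i)>30p|V_i|$ together with $|V_i\cap B|\le 10p|V_i|$ gives $|V_i\cap B|\le \deg(v,V_i)/3$, whence $\deg(v,V_i\sm B)\ge(2/3)\deg(v,V_i)$. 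Thus $\mathbb{E}\deg(v,A\cap V_i)\ge(2p/3)\deg(v,V_i)$, and a constant-deviation Chernoff bound gives $\deg(v,A\cap V_i)\ge (p/2)\deg(v,V_i)$ with failure probability at most $e^{-\Omega(p\deg(v,V_i))}\le e^{-\Omega(p^2|V_i|)}$; I would then union-bound over the at most $nm$ relevant pairs $(v,i)$. Finally, for (d), $\mathbb{E}\deg(v,A)\ge pn/40$, while an upper-tail version of the argument in (a) gives $|A|\le 1.1\,pn$ with high probability, so that $|A|/100\le pn/90$; another constant-deviation Chernoff bound yields $\deg(v,A)\ge |A|/100$ with failure probability $e^{-\Omega(pn)}$, to be union-bounded over the $n$ relevant vertices.

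To combine, \Cref{lem:regularity}\ref{itm:reg-lemma-sizes} gives $|V_i|\ge n/(2m)$, so each of the failure probabilities above is at most $e^{-\Omega(p^2 n/m)}$. The hypothesis $p>m\log n/\sqrt{n}$ yields $p^2 n/m\ge m\log^2 n$, which dominates $\log(nm)$, so the total failure probability over all $O(nm)$ events is $o(1)$ and a suitable $A$ exists. No single step is delicate; the main thing to get right is the small computation underlying (c), where the threshold $30p|V_i|$ in the hypothesis is precisely tuned against the bound $10p|V_i|$ on $|V_i\cap B|$ so that restricting the sample to $V\sm B$ only loses a constant fraction of the relevant neighbours of $v$.
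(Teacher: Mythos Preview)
Your proposal is correct and matches the paper's proof essentially line for line: the same random sampling from $V\sm B$ with probability $p$, the same Chernoff application for each of (a)--(d), the same observation for (c) that $\deg(v,V_i\sm B)\ge(2/3)\deg(v,V_i)$, and the same final union bound using $|V_i|\ge n/(2m)$ and the hypothesis on $p$. The only (harmless) slip is the bound $|V\sm B|\ge 0.9n$ in (a), which should really be something like $0.8n$ once $|V_0|\le\eps n$ is accounted for; the paper more conservatively uses $|V\sm B|>2n/3$, and either suffices.
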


\section{Main proof} \label{sec:main}

	The proof of \Cref{thm:main} comes as a combination of the following two results.
	
	\begin{restatable}{thm}{structlem} \label{thm:struct}
		Let $1/n \ll \mu \ll 1$, and let $G$ be an $r$-edge-coloured graph on $n$ vertices with minimum degree $\delta(G) \ge n/2 + 1200 r \log n$. 
		Then the vertices of $G$ can be partitioned into at most $\qinit$ monochromatic cycles and a $(\mu,20\mu)$-\robmat graph $H$ on at least $n/2$ vertices.
	\end{restatable}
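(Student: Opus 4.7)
The overall strategy is a dichotomy according to whether $G$ is ``close to bipartite.'' If every vertex set $S \subset V(G)$ with $|S| = \lfloor (1/2 - 20\mu)n \rfloor$ spans at least $20\mu n^2$ edges, then since $\delta(G) \ge n/2 \ge (1/2 - \mu)n$ the graph $G$ itself satisfies both conditions in the definition of a $(\mu, 20\mu)$-\robmat graph of type 1. We simply take $H = G$ and the empty cycle partition, which trivially satisfies $|V(H)| = n \ge n/2$.

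Otherwise, fix a set $S$ witnessing the failure: $|S|=\lfloor (1/2 - 20\mu)n \rfloor$ and $e(G[S]) \le 20\mu n^2$. Since each $v \in S$ has at most $|S|-1$ neighbours in $S$ and $\deg(v) \ge n/2$, each such $v$ has at least $20\mu n$ neighbours in $V \setminus S$, so the bipartite structure on $\{S, V \setminus S\}$ carries most of the edges of $G$. Passing to a bipartition $\{X_0, Y_0\}$ that approximately minimises the number of non-bipartite edges via the usual exchange argument, one obtains $\bigl||X_0| - |Y_0|\bigr| = O(\sqrt{\mu}\, n)$ while $e(G[X_0]) + e(G[Y_0]) = O(\mu n^2)$. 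Call a vertex $v \in X_0$ (resp.\ $Y_0$) \emph{good} if almost all of its edges go to $Y_0$ (resp.\ $X_0$) and it has degree at least $(1/3 - \mu/2)n$ to the opposite side, and \emph{exceptional} otherwise. A double count using $e(G[X_0]) + e(G[Y_0]) = O(\mu n^2)$ together with the min-degree hypothesis gives that the set $E_{\mathrm{exc}}$ of exceptional vertices has size $o(n)$.

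The main technical step is to cover $E_{\mathrm{exc}}$, plus the handful of additional vertices needed to equalise $|X_0|$ and $|Y_0|$, using at most $\qinit = 400r+2$ monochromatic cycles. For each colour $c \in [r]$ I would group the exceptional vertices whose bad edges are predominantly of colour $c$, and use the Connecting Paths lemma (\Cref{lem:luczak}) after applying the regularity lemma, combined with a reservoir of random connector vertices provided by \Cref{prop:sample}, to stitch these vertices into a bounded number of $c$-coloured cycles. An extra constant number of cycles can then be used to repair any residual imbalance between $|X_0|$ and $|Y_0|$. After removing these cycles the remaining graph $H$ carries a balanced bipartition $\{A,B\}$ inherited from $\{X_0, Y_0\}$ with $|V(H)| = (1-o(1))n$, and since every good vertex loses only $o(n)$ neighbours, the type-2 conditions $\delta(H) \ge (1/32-\mu)|V(H)|$ and $\deg_H(v) \ge (1/3-\mu)|V(H)|$ for all but $(1/64+\mu)|V(H)|$ vertices remain intact, showing $H$ is $(\mu, 20\mu)$-\robmat.

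The hardest part is this third step: producing only $O(r)$ monochromatic cycles that simultaneously absorb all exceptional vertices and rebalance the partition. Bounding the cycle count by $O(r)$ rather than $O(r^2)$ forces a colour-wise argument rather than a per-vertex one, and the slack $1200 r \log n$ in the minimum degree is precisely what drives this: for each exceptional vertex one must thread a connector path in a prescribed colour, and a Chernoff-style analysis (applied through \Cref{prop:sample}) requires $\Omega(r \log n)$ extra degree to keep the failure probability small enough for a union bound over all $E_{\mathrm{exc}}$ vertices and all colours. This matches the logarithmic lower bound in \Cref{prop:degree-best-possible}. Verifying that after absorption the per-vertex degree in $H$ still clears the delicate $1/32 - \mu$ threshold for \emph{every} remaining vertex is the most fragile piece of bookkeeping in the argument.
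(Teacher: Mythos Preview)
Your dichotomy and the first step (type-1 case) are fine and match the paper. But the type-2 analysis goes astray in two linked ways.

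\textbf{You are solving the wrong subproblem.} The type-2 definition of a $(\mu,20\mu)$-\robmat graph explicitly permits up to $(1/64+\mu)|V(H)|$ vertices to have degree only $(1/32-\mu)|V(H)|$. Hence your exceptional set $E_{\mathrm{exc}}$ need not be absorbed into cycles at all; it can simply stay in $H$. The only thing the cycles in $\Cinit$ have to accomplish is to make the two sides $A,B$ of the bipartition \emph{exactly} equal in size. In the paper's argument the partition $\{X,Y\}$ already satisfies $\delta(G[X,Y]) \ge n/16 - O(\mu n)$ and has at most $O(\mu n)$ vertices of degree below $2n/5 - O(\mu n)$, which, after removing $O(\mu n)$ cycle vertices, already clears the type-2 thresholds with no further work.

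\textbf{Your balancing mechanism does not work, and your explanation of the $1200r\log n$ term is incorrect.} The imbalance $k = |X| - \lceil n/2 \rceil$ can be as large as $\Theta(\mu n)$, so ``a handful of additional vertices'' is wrong, and the cycles you remove from $X$ must have total length exactly $2k$ (possibly plus one singleton for parity). Your toolkit---regularity, \Cref{lem:luczak}, \Cref{prop:sample}---produces short connecting paths of order $O(m)$ in the reduced graph, not cycles of prescribed length $\Theta(k/r)$ inside $G[X]$; with $O(r)$ such cycles you cannot cover $\Theta(\mu n)$ vertices. The paper instead uses the Bondy--Simonovits theorem (\Cref{cor:exact-length-cycles}): a graph of average degree $\ge 100\ell$ contains a cycle of length exactly $\ell$ for any even $\ell \ge (2/\log 10)\log n$. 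When $k \ge 400r\log n$ one writes $2k = \ell_1 + \cdots + \ell_t$ with each $\ell_i \in [k/(400r), k/(200r)]$ and peels off monochromatic $\ell_i$-cycles from $G[X]$ one at a time, using the crucial structural property $\Delta(G[X]) \le n/16$ to keep the average degree high after each deletion. When $0 \le k < 400r\log n$ the extra $1200r\log n$ in $\delta(G)$ is what guarantees $\delta(G[X]), \delta(G[Y]) \ge \Omega(r\log n)$, so that the majority colour class in each part still has average degree $\ge 100\ell_i$ for $\ell_i \approx \log n$; one takes a single $\ell$-cycle in $Y$ and $t \le 200r+1$ cycles in $X$ with $\ell + 2k = \sum \ell_i$. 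This---not any Chernoff or sampling argument---is the sole place the logarithmic excess in the minimum degree is spent, and it is exactly what matches the lower bound of \Cref{prop:degree-best-possible}.
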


	\begin{thm} \label{thm:robmatpart}
		Let $1/n\ll \mu\le  {\nu}/{20}\ll 1$. Every $r$-edge-coloured $(\mu,\nu)$-\robmat graph on $n$ vertices can be partitioned into $(1/\mu + 200)r^2$ monochromatic cycles.
	\end{thm}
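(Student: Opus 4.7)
I would follow the roadmap from \Cref{sec:overview}. First, apply \Cref{lem:regularity} to $H$ with parameters $\eps \ll d \ll \mu$, using the canonical bipartition as prepartition if $H$ is of type~2. This yields an exceptional set $V_0$, clusters $V_1, \ldots, V_m$, and an $(\eps,d)$-reduced graph $\G$. By \Cref{cla:redrobust}, $\G$ is $(\mu',\nu')$-\robmat with $\mu' = \mu+rd+2\eps$ and $\nu' = \nu-rd-2\eps$, so in particular \Cref{lem:2match} yields a perfect 2-matching of $\G$. The crucial structural step is then to select a family $\F$ of at most $(1/\mu)r^2$ monochromatic components of $\G$ whose union $\HH$ spans $V(\G)$ and is still \robmat (with slightly worse parameters). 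A natural greedy route is to repeatedly pick a monochromatic component covering many currently uncovered vertices: since every vertex of $\G$ has degree at least $(1/2-\mu')m$ distributed among $r$ colours, each vertex lies in a monochromatic component of size at least $m/(2r) \gg \mu m$, and iterating $\Theta(1/\mu)$ times per colour accounts for at most $r^2/\mu$ components in total.

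Next, mark as \emph{bad} any vertex in $V_0$ or any vertex of atypical degree in a relevant regular pair; this set $B$ has size $O(\eps n)$. Because $H$ has high minimum degree, every bad vertex has many neighbours in the good part, so applying \Cref{lem:EGP} colour-by-colour covers $B$ by $\qbad = 100r^2$ monochromatic cycles $\Cbad$, using at most $d|V_i|$ vertices from each cluster $V_i$. Define $b(x_i) = |V_i \sm V(\Cbad)|$; after small parity and (in type~2) bipartition corrections obtained by appending short monochromatic paths within the components of $\HH$, invoke \Cref{lem:rob-b-matching} to produce a perfect $b$-matching $\om$ of $\HH$. For each component $\compt \in \F$, the restriction of $\om$ to $\compt$ is a connected 2-matching in a single colour; splicing together long paths inside the regular pairs (\Cref{lem:inner-paths}) with monochromatic connecting paths (\Cref{lem:connecting-paths}) produces one cycle per component. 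Together with $\Cbad$ this gives at most $(1/\mu)r^2 + 100r^2 \le (1/\mu+200)r^2$ monochromatic cycles.

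The main obstacle is the component-selection step: the family $\F$ must simultaneously cover every cluster, preserve enough robustness that the perfect $b$-matching survives removing $V(\Cbad)$ and the parity corrections, and consist of at most $(1/\mu)r^2$ components. Handling the type~2 case in parallel with type~1 is a further technical challenge, since the weaker degree condition in the bipartite case makes it harder to guarantee that greedily chosen monochromatic components remain large after partial deletion; one likely needs to balance the selection between the two parts of the bipartition and appeal to the more delicate structure exploited in \Cref{lem:rob-b-matching}.
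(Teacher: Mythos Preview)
Your overall architecture matches the paper's, but the step you flag as ``the main obstacle'' has a much simpler solution than the greedy covering you sketch, and missing it causes the difficulties you describe. The paper does not try to \emph{cover} $V(\G)$ by few components; instead it defines $\HH$ to be the union of \emph{all} monochromatic components of $\G$ of order at least $(\mu/r)m$. There are at most $r/\mu$ such components per colour, hence at most $r^2/\mu$ in total, and---crucially---every vertex loses at most $r\cdot(\mu/r)m=\mu m$ edges when passing from $\G$ to $\HH$. Now \Cref{cla:robsub} applies directly and $\HH$ is $(4\mu,\nu-3\mu)$-\robmat of the same type as $G$, with no separate argument needed for type~2. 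Your greedy cover, by contrast, only guarantees vertex coverage, not degree preservation, so there is no reason $\HH$ should remain \robmat; this is precisely the gap you yourself identify at the end.

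A second, smaller gap concerns how the $b$-matching becomes cycles. You write that ``the restriction of $\om$ to $\compt$ is a connected 2-matching''; but a $b$-matching is a weight function, and the edges of positive weight need not span $\compt$. The paper handles this by \emph{first} building skeleton cycles $\Ch$ (one per monochromatic component of $\HH$) that contain an edge $u_ev_e$ for \emph{every} edge $e$ of $\HH$, using \Cref{lem:connecting-paths}; only afterwards does it find the $b$-matching and replace each $u_ev_e$ by a path of the correct length via \Cref{lem:inner-paths}. Also, in the paper the bulk of each cluster is absorbed by a perfect $2$-matching $\M$ of $\HH$, and the $b$-matching only handles a residual of size about $\eps^{1/4}|V_i|$ per cluster; this keeps the parameters of \Cref{lem:rob-b-matching} comfortable. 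Finally, \Cref{lem:EGP} is applied once to the bipartite graph between the bad set and a random sample $A$ (from \Cref{prop:sample}), not colour-by-colour; it already returns at most $100r^2$ cycles.
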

	
	Let us first elaborate on the conditions that are implicit in our $\ll$ notation. We will need to select the five parameters $\nu,\mu,d,\eps$ and $n$, in this order. As a point of reference, we describe here the exact constraints that come from the proofs of \Cref{thm:struct,thm:robmatpart}:
	\begin{equation} \label{eq:params}
	\begin{split}
		\nu &< \frac{1}{1000}, \\
		\mu &< \min \left\{ \frac{1}{700000} , \frac{\nu}{20}  \right\}, \\
		d &\le \frac{\mu}{r}, \\
		\eps &< \min \left\{ \frac{1}{10^{13} r^6} , \frac{\mu^4}{20^4},  \frac{d^2}{4000},  \eps_{\ref{lem:inner-paths}}(d), \eps_{\ref{lem:connecting-paths}}(d)  \right\}, \\
		n &> \max \left\{ \frac{4}{\eps}(M_{\ref{lem:regularity}}(\eps,r,2))^4, n_{\ref{lem:inner-paths}}(\eps), n_{\ref{lem:connecting-paths}}(\eps)  \right\},
	\end{split}
		\tag{$\ll$}
	\end{equation}
	where $M_{\ref{lem:regularity}}, \eps_{\ref{lem:inner-paths}}, \eps_{\ref{lem:connecting-paths}}, n_{\ref{lem:inner-paths}}$ and $n_{\ref{lem:connecting-paths}}$ are the appropriate constants coming from \Cref{lem:inner-paths,lem:regularity,lem:connecting-paths}. Let us emphasize that ${1}/{n} < \eps < d < \mu < \nu < 1$.
	
	\medskip
	The proof of \Cref{thm:struct} is a somewhat technical argument that shows that either $G$ is already \robmat of type 1, or it can be turned into a type 2 robustly $2$-matchable graph by deleting few monochromatic cycles. We defer its proof to \Cref{sec:lemmas}, and proceed with the proof of \Cref{thm:robmatpart}.
	
	\begin{proof}[Proof of \Cref{thm:robmatpart}]
		Let $G=(V,E)$ be an $r$-edge-coloured $(\mu,\nu)$-\robmat graph with $20\mu\le \nu$.
		If $G$ is of type~2, then it is a balanced bipartite graph, and we denote its bipartition by $\{A,B\}$.
		By~\eqref{eq:params}, we are guaranteed a partition $V_0, V_1,\dots,V_m $ of $V(G)$ as detailed in \Cref{lem:regularity}.
		Let $\G$ be the corresponding $(\eps,d)$-reduced graph.
		If $G$ is of type~2, then $\G$ is also balanced bipartite, and we denote its bipartition by $\{\calA,\calB\}$.
		Note that $\G$ has $m\le M_{\ref{lem:regularity}}(\eps,r,2)$ vertices.
		
		By \Cref{cla:redrobust}, and using $\eps\le \mu/2$ and $d\le \mu/r$, we know that $\G$ is $(3\mu,\nu-2\mu)$-\robmat.
		Let $\HH$ denote the subgraph of $\G$ that consists of all edges contained in monochromatic components of order at least $(\mu/r)m$. 
		Then $\HH$ is the union of at most $\qcompt$ monochromatic components, and $\deg_{\HH}(x)\ge \deg_{\G}(x)-\mu m$ for every vertex $x$ in $\G $.
		By \Cref{cla:robsub}, $\HH$ is $(4\mu, \nu-3\mu)$-\robmat.
		Moreover, the type of $\HH$ coincides with the type of $G$.
		As $20\mu \le \nu \le {1}/{1000}$ (by \eqref{eq:params}), \Cref{lem:2match} implies that $\HH$ contains a perfect 2-matching $\om$.
		We denote by $\M$ the edges of $\HH$ that have non-zero weight under $\om$.
		
		\medskip
		
		Let us now call a vertex $v\in V_i$ ($i\in [m]$) \emph{good} if $v$ has typical degree in each regular pair $(V_i,V_j)$ that corresponds to an edge of $\M$. In other words, $v$ is good if $\deg_c(v,V_j)\ge (d-\eps)|V_j|$ for each edge $x_ix_j\in\M$ of colour $c$. We call all other vertices of $G$ \emph{bad}.
		
		\begin{claim}\label{cla:bad-vertices}
			There is a collection $\Cbad$ of at most $\qbad$ vertex-disjoint monochromatic proper cycles and edges in $G$ covering all bad vertices such that
			\begin{equation}\label{eq:badcoverratio}
				|V_i \cap V(\Cbad)| \le 5\sqrt{\eps} |V_i| \qquad \text{for every $i\in [m]$.}
				\end{equation}	 
		\end{claim}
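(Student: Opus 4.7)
The plan is to cover $B$ by applying \Cref{lem:EGP} to the bipartite graph between $B$ and a small random sample $A \subset V \setminus B$. First I would bound $|B|$. Since $\M$ is a $2$-matching, every vertex $x_i \in V(\G)$ is incident to at most two edges of $\M$, so by \eqref{equ:typical-degree-in-reg-pair} at most $2\eps|V_i|$ vertices of $V_i$ fail to have typical degree in the corresponding regular pairs. Including $V_0$ in $B$, this yields $|B \cap V_i| \le 2\eps|V_i|$ for every $i \in [m]$ and $|B| \le |V_0| + 2\eps n \le 3\eps n$.

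Next I would apply \Cref{prop:sample} with $p = \sqrt{\eps}$ to obtain a set $A \subset V \setminus B$ with $|A| \ge \sqrt{\eps}\, n / 2$ and $|A \cap V_i| \le 2\sqrt{\eps}\,|V_i|$ for every $i$. The hypotheses of that proposition hold since $|B \cap V_i| \le 2\eps|V_i| \le 10\sqrt{\eps}\,|V_i|$, $\sqrt{\eps} < 1/100$, and $m \log n / \sqrt{n} < \sqrt{\eps}$ using $m \le M_{\ref{lem:regularity}}(\eps, r, 2)$ and the lower bound on $n$ in \eqref{eq:params}. I would then invoke \Cref{lem:EGP} on $G[A, B]$. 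The size condition $|A| \ge 100^3 r^3 |B|$ reduces to $\sqrt{\eps}/6 \ge 10^6 r^3 \eps$, which follows from $\eps < 1/(10^{13} r^6)$ in \eqref{eq:params}. For the degree condition, note that for every $v \in B$ we have $\deg(v, V \setminus B) \ge \delta(G) - |B| \ge (1/32 - \mu - 3\eps)n > n/40$ even in the type $2$ case, so item (d) of \Cref{prop:sample} yields $\deg(v, A) \ge |A|/100$. The lemma therefore produces a family $\Cbad$ of at most $\qbad$ pairwise vertex-disjoint monochromatic proper cycles and edges covering $B$, and \eqref{eq:badcoverratio} is immediate from $V(\Cbad) \subset A \cup B$:
\[
   |V(\Cbad) \cap V_i| \le |A \cap V_i| + |B \cap V_i| \le 2\sqrt{\eps}\,|V_i| + 2\eps|V_i| \le 5\sqrt{\eps}\,|V_i|.
\]

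The only real delicacy is the calibration of $p$: it must be large enough that $A$ dominates $B$ in size by the factor $\Omega(r^3)$ required by \Cref{lem:EGP} and that the sampled degree into $A$ is at least $|A|/100$, yet small enough that $A$ occupies only a tiny fraction of each cluster so that the per-cluster bound survives. Setting $p = \sqrt{\eps}$ under the hierarchy \eqref{eq:params} comfortably meets all of these requirements simultaneously.
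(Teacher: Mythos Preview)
Your argument is essentially identical to the paper's: bound $|B|$ via the $2$-matching property, sample $A$ with \Cref{prop:sample}, apply \Cref{lem:EGP}, and deduce \eqref{eq:badcoverratio} from $V(\Cbad)\subset A\cup B$. The only discrepancy is that the paper takes $p=2\sqrt{\eps}$ rather than your $p=\sqrt{\eps}$; with your choice the inequality $\sqrt{\eps}/6\ge 10^6 r^3\eps$ requires $\eps\le 1/(36\cdot 10^{12}r^6)$, which is marginally stronger than the hypothesis $\eps<1/(10^{13}r^6)$ in \eqref{eq:params}, so you should double $p$ (this still keeps $|A\cap V_i|\le 4\sqrt{\eps}|V_i|$ and the final bound intact).
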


		\begin{proof}
			Let $B$ be the set of bad vertices (note that $V_0\subset B$). By \eqref{equ:typical-degree-in-reg-pair}, and because $\M$ is a 2-matching, we know that $|B\cap V_i|\le 2\eps |V_i|$ for every $i\in [m]$. 
			In particular, $|B|\le 2\eps|V_i|\cdot m + |V_0|  \le 2\eps|V_i|\cdot m + \eps n \le 3\eps n$.
		
			This together with~\eqref{eq:params} means that we can apply \Cref{prop:sample} with $p=2\sqrt{\eps}$ to obtain a set $A$ of size $|A|\ge \sqrt{\eps} n \overset{\eqref{eq:params}}{\ge} 3\cdot 100^3r^3\eps n \ge 100^3 r^3|B|$ 
			such that $|A\cap V_i|\le 4\sqrt{\eps}|V_i|$ for every $i\in[m]$, and each vertex $v\in G$ with $\deg_G(v,V\sm B)> n/40$ has at least $|A|/100$ neighbours in~$A$. As $\delta(G)\ge (1/32-\mu)n$ and $|B|<3 \eps n$, this actually holds for every vertex of $G$, and in particular for every vertex in $B$. But then \Cref{lem:EGP} provides a set $\Cbad$ of at most $\qbad$ disjoint monochromatic proper cycles and edges covering $B$. Note that the vertices of $\Cbad$ are contained in $A\cup B$, so \eqref{eq:badcoverratio} clearly holds.
		\end{proof}
		
		\medskip
		
		\begin{claim} \label{cla:ch}
			There is a collection $\Ch$ of at most $\qcompt$ vertex-disjoint monochromatic proper cycles and edges in $G$, all disjoint from $\Cbad$, such that
			\begin{enumerate}[\upshape (a)]
				\item \label{itm:auxcycconv_vxs} 
					for every edge $e=x_ix_j$ of $\HH$, there is an edge $u_ev_e$ of colour $c(e)$ in $\Ch$ between vertices $u_e\in V_i$ and $v_e\in V_j$ that have typical degree in the regular pair $(V_i,V_j)$, and
				\item \label{itm:auxcycconv_small} 
					$|V_i \cap V(\Ch)| \le \eps |V_i|$ for every $i\in [m]$.
			\end{enumerate}
		\end{claim}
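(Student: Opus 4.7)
The plan is to construct, for each monochromatic component $\compt$ of $\HH$, a single monochromatic cycle (or edge) $C_\compt$ in $G$ of colour $c(\compt)$ that contains the required anchor edge $u_ev_e$ for every $e \in E(\compt)$. Since the components of $\HH$ are by definition the monochromatic components of $\G$ of order at least $(\mu/r)m$, each colour contributes at most $r/\mu$ such components, so in total there are at most $r \cdot r/\mu = \qcompt$ of them, matching the bound on $|\Ch|$ in the claim.

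Fix a component $\compt$ of colour $c$ and enumerate its edges as $e_1, \dots, e_k$. First I would pick all anchor vertices simultaneously: for each $e_\ell = x_{a_\ell} x_{b_\ell}$, select $u_{e_\ell} \in V_{a_\ell}$ and $v_{e_\ell} \in V_{b_\ell}$ of typical degree in colour $c$ in the pair $(V_{a_\ell}, V_{b_\ell})$, making sure that all anchors are distinct and disjoint from $V(\Cbad)$. This selection is always possible: by \eqref{equ:typical-degree-in-reg-pair}, in any cluster $V_i$ at most $m \cdot \eps |V_i|$ vertices fail to have typical degree in at least one of the (at most $\deg_\HH(x_i) \le m$) incident pairs, and combined with $|V(\Cbad) \cap V_i| \le 5\sqrt{\eps}|V_i|$ from Claim~\ref{cla:bad-vertices} and the crude bound of $m$ on the number of anchors needed in $V_i$, only a lower-order fraction of $V_i$ is forbidden.

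Next, I would traverse the edges cyclically and apply Lemma~\ref{lem:luczak} to build a monochromatic $c$-coloured path $P_\ell$ of order at most $2m$ from $v_{e_\ell}$ to $u_{e_{\ell+1}}$ (indices mod $k$) for each $\ell$. At each application, the forbidden set $W$ comprises $V(\Cbad)$, the anchor vertices other than the two endpoints of the current path, and the internal vertices of all previously constructed paths (for this and earlier components). Concatenating $P_1, \dots, P_k$ with the anchor edges then closes into the desired monochromatic cycle $C_\compt$; doing this for every $\compt$ yields the family $\Ch$, which satisfies (a) by construction, and whose cycles are automatically vertex-disjoint because each connecting step explicitly avoids previously used vertices.

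The main technical point is to keep the forbidden set within the threshold $|W \cap V_i| \le (d/2)|V_i|$ required by Lemma~\ref{lem:luczak}. This reduces to bookkeeping: the grand total of anchor and connecting-path vertices across all components is bounded by $m^2 + 2m \cdot \binom{m}{2} \le 2m^3$; by \eqref{eq:params} we have $5\sqrt{\eps} \le d/4$, so $V(\Cbad)$ occupies at most $(d/4)|V_i|$ of each cluster, while the bound $|V_i| \ge n/(2m)$ together with $n \ge 4 M^4/\eps$ forces $|V_i| \gg m^3/d$. Thus the threshold holds throughout the iterative construction, and the same estimate $|V(\Ch)| \le 2m^3 \le \eps |V_i|$ yields condition (b).
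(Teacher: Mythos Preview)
Your approach is essentially identical to the paper's: build one cycle per monochromatic component of $\HH$ by selecting an anchor pair for each edge and stitching consecutive anchors together via \Cref{lem:luczak}, then use the bound $|E(\HH)|\cdot 2m \le m^3 \ll \eps|V_i|$ for part~(b) and the forbidden-set threshold.

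There is one genuine gap. In your anchor-selection step you only require that $u_{e_\ell}$ and $v_{e_\ell}$ have typical degree in the pair $(V_{a_\ell},V_{b_\ell})$ and avoid $V(\Cbad)$ and previously chosen anchors; you never require that $u_{e_\ell}v_{e_\ell}$ is actually a $c$-coloured edge of $G$. Yet you later speak of ``the anchor edges'' and concatenate the paths $P_\ell$ with them to form $C_\compt$. Without adjacency, $C_\compt$ is not a cycle, and part~(a) --- which asks for an \emph{edge} $u_ev_e$ of colour $c(e)$ in $\Ch$ --- is not established. The fix is straightforward and is exactly what the paper does: rather than picking the two anchors independently, observe that the set of still-available typical vertices in each of $V_{a_\ell}$ and $V_{b_\ell}$ has size well above $\eps|V_i|$, so by $\eps$-regularity of the pair in colour $c$ there is a $c$-edge between these two sets; take its endpoints as $u_{e_\ell},v_{e_\ell}$. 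This forces you to process the edges sequentially (so that ``already chosen anchors'' is well-defined at each step), which is harmless for the bookkeeping you already do.
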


		\begin{proof}
			We will apply a simple algorithm to find one cycle for each monochromatic component of $\HH$. For this, take a component $\compt$ of colour $c$, and let $e_1,\dots,e_s\in E(\HH)$ be its edges. We perform the following two steps:
			
			\begin{enumerate}[\upshape (1)]
				\item \label{itm:step-1}
					 For $i = 1, \ldots, s$, let $e_i=y_iz_i$, and pick $u_i\in V(y_i)$ and $v_i\in V(z_i)$ that are not yet used, but have typical degree in the regular pair $(V(y_i),V(z_i))$, and $u_iv_i$ is a $c$-coloured edge in $G$.
				 \item \label{itm:step-2}
					 Use \Cref{lem:luczak} to find a $c$-coloured $v_i$-$u_{i+1}$ path $P_i$ in $G$ of order at most $2m$ that avoids all previously used vertices (except $v_i$ and $u_{i+1}$), for $i = 1, \ldots, s$.
			\end{enumerate}
			
			If these steps work, then $C_{\compt}=u_1v_1P_1u_2v_2P_2\dots u_sv_sP_su_1$ is a $c$-coloured cycle that takes care of all edges in $\compt$. Repeating this for every component gives us $\qcompt$ disjoint monochromatic cycles satisfying Condition \ref{itm:auxcycconv_vxs}. Condition \ref{itm:auxcycconv_small} is also satisfied because the edges and paths produced by these steps use at most $|E(\HH)|\cdot 2m\le m^3 \overset{\eqref{eq:params}}{\le} \eps|V_i|$ vertices in $G$ (using $n> {2m^4}/{\eps}$ and $|V_i|> {n}/{(2m)}$ in the second inequality). We just need to check that these steps can indeed be applied.
			
			For Step \ref{itm:step-1}, note that by \eqref{equ:typical-degree-in-reg-pair}, $V(y_i)$ and $V(z_i)$ each have at least $(1-\eps)|V(y_i)|$ typical vertices, of which at most $2\eps |V(y_i)|$ have been used in former steps and at most $5\sqrt{\eps}|V_i|$ are in $\Cbad$ by~\eqref{eq:badcoverratio}, as noted above. But then there is an edge between unused typical vertices in colour $c$ because $\eps<1/100$ and $(V(y_i),V(z_i))$ is $\eps$-regular. For Step \ref{itm:step-2}, we just need to apply \Cref{lem:luczak} with the set $W$ consisting of the vertices of $\Cbad$ in $V(y_i)\cup V(z_i)$, as well as all previously used vertices except $v_i$ and $u_{i+1}$. This is possible because $|W|<12\sqrt{\eps}|V_i|$.
		\end{proof}
		
		Note that $\Cbad$ and $\Ch$ together contain at most $\qrest$ cycles. For parity reasons, we need another small collection $\Cs$ of single vertices.
		For every component $\ccompt$ of (the uncoloured graph)~$\HH$, add a single vertex of $\bigcup_{x\in \ccompt} V(x) \sm V(\Cbad \cup \Ch)$ to $\Cs$ if $\big|\bigcup_{x\in \ccompt} V(x) \sm V(\Cbad \cup \Ch)\big|$ is odd.
		Since $\HH$ is $(4\mu, \nu-3\mu)$-\robmat, $\HH$ has at most two components.
		Thus we have $|\Cs| \leq 2$.
		Moreover, if $\HH$ is of type 2, then it has only one component with an even number of vertices.
			So in this case $\Cs$ is just the empty set.
		Write $\Cn = \Cbad \cup \Ch \cup \Cs$, and note that
		\begin{equation}\label{equ:VCn-bounded}
			|V_i\cap V(\Cn)|\le |V_i\cap V(\Cbad\cup \Ch \cup \Cs)| \leq (5\sqrt{\eps} + \eps)|V_i| +2 \leq 6 \sqrt{\eps} |V_i|
		\end{equation}
		for every $i\in[m]$.
		The rest of the proof will extend the cycles in $\Ch$ so that they cover all the remaining vertices.

		More precisely, $\Ch$ will serve as the ``skeleton'' of our cycle cover in the sense that we will use \Cref{lem:inner-paths} to replace each edge $u_ev_e$ (corresponding to some $e=x_ix_j$ in $\HH$) with a $u_e$-$v_e$ path $P_e$ in $(V_i,V_j)$. But first we need to decide how long these paths should be. So fix an $\ell$ such that
		\begin{equation} \label{eq:elldef}
			(1 - \eps^{1/4})|V_i| \le \ell \le (1 - \eps^{1/4})|V_i| + 2 \text{ and $\ell$ is divisible by 2.}
		\end{equation}
		Our plan is to cover at least $\ell$ vertices in each cluster by the paths corresponding to the edges of the 2-matching $\M$.
		By~\eqref{equ:VCn-bounded}, this leaves $b(x_i)$ vertices in $V_i$, where
		\begin{equation}\label{equ:bx-bound}
			\eps^{1/4}|V_i| \ge b(x_i)=|V_i \sm V(\Cn)|-\ell \geq \eps^{1/4} |V_i| - |V_i \cap \Cn| -2 \geq 0
		\end{equation}
		vertices in each $V_i$. 
		
		\begin{claim} \label{claim:b-matching-H}
			$\HH$ has a perfect $b$-matching $\om_0:E(\HH)\to \Zp$.
		\end{claim}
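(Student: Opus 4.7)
The plan is to verify the hypotheses of Lemma \ref{lem:rob-b-matching} and apply it to $\HH$ with parameters $t := \eps^{1/4}|V_1|$ and $\gamma := 10\eps^{1/4}$. Recall that $\HH$ is $(4\mu,\nu-3\mu)$-\robmat, so the required inequalities $m/t \le \gamma \le 4\mu \le (\nu-3\mu)/4 < 1/4000$ follow routinely from \eqref{eq:params}: the bounds $\gamma \le 4\mu$ and $4\mu \le (\nu-3\mu)/4$ come from $\eps \le (\mu/20)^4$ and $\mu \le \nu/20$, while $m/t \le \gamma$ reduces to $m^2 \le 5\sqrt{\eps}\,n$ after using $|V_1| \ge n/(2m)$, which holds because $n \ge 4 M_{\ref{lem:regularity}}(\eps,r,2)^4/\eps$.

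For condition (a), the upper bound $b(x_i) \le t$ is exactly the upper bound in \eqref{equ:bx-bound}. For the lower bound, combining \eqref{equ:bx-bound} with \eqref{equ:VCn-bounded} gives $b(x_i) \ge (\eps^{1/4} - 6\sqrt{\eps})|V_i| - 2$, which exceeds $(1-10\eps^{1/4})t$ once $n$ is large enough (say, as soon as $|V_i| \ge 1/(2\sqrt{\eps})$).

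For condition (b), I would unpack
\[ \sum_{x_i \in \ccompt} b(x_i) \;=\; \Big|\bigcup_{x_i \in \ccompt} V_i \setminus V(\Cn)\Big| - \ell\cdot|\ccompt|, \]
and observe that this is even because $\ell$ was chosen even in \eqref{eq:elldef} and because $\Cs$ was constructed precisely so that the first term is even in every component of $\HH$. For condition (c) in the type~2 case, since the regularity lemma was applied with the bipartite prepartition of $G$, one has $|\calA|=|\calB|$, so the condition reduces to $|A_G\setminus V(\Cn)|=|B_G\setminus V(\Cn)|$, where $\{A_G,B_G\}$ is the bipartition of $G$. Using $|A_G|=|B_G|$, the fact that $V_0\subset V(\Cbad)\subset V(\Cn)$, and $\Cs=\emptyset$ in the type~2 case, this further reduces to the assertion that each proper cycle and edge of $\Cbad\cup\Ch$ uses equally many vertices from $A_G$ and $B_G$, which is automatic since those subgraphs live inside the bipartite graph $G$.

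I do not expect any genuine obstacle here: the whole point of choosing $\ell$ even and of defining $\Cs$ the way we did was to make conditions (b) and (c) hold on the nose, so the claim should follow from a direct application of Lemma \ref{lem:rob-b-matching}.
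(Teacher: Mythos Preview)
Your proposal is correct and follows essentially the same approach as the paper: both arguments verify the hypotheses of Lemma~\ref{lem:rob-b-matching} for $\HH$ with $t=\eps^{1/4}|V_i|$, using \eqref{equ:bx-bound} and \eqref{equ:VCn-bounded} for condition (a), the definition of $\Cs$ and the evenness of $\ell$ for (b), and the fact that $\Cbad\cup\Ch$ consists of proper cycles and edges in a balanced bipartite graph for (c). The only cosmetic difference is that the paper takes $\gamma=\mu$ (noting $|V_i\cap V(\Cn)|+2\le \mu\eps^{1/4}|V_i|$) while you take $\gamma=10\eps^{1/4}$; both choices fit the required chain of inequalities under \eqref{eq:params}.
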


		\begin{proof}
			By~\eqref{equ:bx-bound} and since $|V_i \cap \Cn|+2 \leq 12\sqrt{\eps}|V_i| +2  \overset{\eqref{eq:params}}{\le} \mu \eps^{1/4}|V_i|$, we have
			\[ 
				 \quad \eps^{1/4}|V_i| \quad \ge \quad b(x_i) \quad \ge \quad \eps^{1/4}|V_i| - |V_i \cap \Cn| -2 \quad \overset{}{\ge} \quad (1-\mu)\eps^{1/4}|V_i|.
			\]
			Moreover, the definition of $\Cs$ implies that $\sum_{x\in \ccompt} b(x)$ is even for every component $\ccompt$ of $\HH$.
			Recall that $\HH$ is $(4\mu, \nu-3\mu)$-\robmat.
			If $G$ is of type~1, then $\HH$ is of type~1 and we can finish by \Cref{lem:rob-b-matching}.

			Suppose that $G$ is of type~2 and thus $\HH$ is of type~2 as well.
			Recall that $G$ has bipartition $\{A,B\}$ and $\HH$ has bipartition $\{\calA,\calB\}$.
			Moreover, $\bigcup_{x\in \calA} V(x)\subset A$ and $\bigcup_{y\in \calB} V(y)\subset B$.
			Since $G$ is balanced bipartite and $\Cbad \cup \Ch$ consists of proper cycles and edges, it follows that $\sum_{x\in \calA}b(x) = \sum_{y\in \calB}b(y)$.
			Therefore, we can finish by \Cref{lem:rob-b-matching}.
			Note that the set $\Cs$ did not play a role in this case (and is anyways empty).
		\end{proof}
		
		Let $\om_0$ be the perfect $b$-matching guaranteed by \Cref{claim:b-matching-H}. 
		Define $\om:E(\HH)\to \Zp$ as
		\[
			\om(x_ix_j) = \begin{cases} 
				\om_0(x_ix_j) & \text{ for $x_ix_j\notin\M$}, \\
				\om_0(x_ix_j) + \frac{\ell}{\deg_{\M}(x_i)} & \text{ for $x_ix_j\in\M$.}
			\end{cases}
		\]
		Note that this is well-defined because $\deg_{\M}(x_i)=\deg_{\M}(x_j)$, and integral because $\ell$ is even and $\deg_{\M}(x_i)\in [2]$. Then for every vertex $x_i$ in $\HH$, we have $\sum_{x_i\in e\in \HH} \om(e) = |V_i \sm V(\Cn)|$ and $\sum_{x_i\in e\in \HH \sm \M} \om(e) \le b(x_i)\le \eps^{1/4} |V_i|$.
		
		\begin{claim}
			For every edge $e=x_ix_j$ in $E(\HH)$, there is a $u_e$-$v_e$ path $P_e$ of colour $c(e)$ in $G[V_i,V_j]$ that contains exactly $\om(e)+1$ vertices in each of $V_i$ and $V_j$. Moreover, these paths can be chosen so that they are internally vertex-disjoint from each other and from $\Cn$.
		\end{claim}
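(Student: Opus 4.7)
The plan is to process the edges of $\HH$ one at a time, building each $P_e$ by applying Lemma~\ref{lem:inner-paths} to the colour-$c(e)$ regular pair $(V_i,V_j)$ restricted to subsets $U_i\subset V_i\sm V(\Cn)$ of good vertices not yet used. I handle the non-$\M$ edges first. For these, $\om(e)\le b(x_i)\le \eps^{1/4}|V_i|$, so the desired path is short relative to the cluster size, and the first form of Lemma~\ref{lem:inner-paths} applies with generous slack: at any stage of the non-$\M$ phase the used vertices together with $V(\Cn)$ occupy at most $2\eps^{1/4}|V_i|$ of each cluster, and both $u_e$ and $v_e$ are typical in $(V_i,V_j)$ by the choice made in Claim~\ref{cla:ch}. (When $\om(e)=0$, nothing is needed: $P_e$ is just the edge $u_ev_e$ supplied by $\Ch$.)

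I then handle the $\M$-edges. Since $\om$ restricted to $\M$ is a perfect $2$-matching, the support of $\M$ decomposes into components that are either a single edge (with $\om(e)=\ell$, both endpoints of $\deg_\M=1$) or a cycle (with $\om(e)=\ell/2$ on every edge and all vertices of $\deg_\M=2$). After the non-$\M$ phase, the available vertices in each cluster $V_i$ form a set $U_i$ of size exactly $\ell$. For a single-edge $\M$-component $\{x_ix_j\}$, $P_e$ must be a Hamilton path of $(U_i\cup\{u_e\},U_j\cup\{v_e\})$; the second form of Lemma~\ref{lem:inner-paths} applies because every $v\in U_i$ is typical for $(V_i,V_j)$ in colour $c(e)$ and at most $2\eps^{1/4}|V_j|$ of its neighbours lie outside $U_j$, comfortably yielding the required minimum-degree condition $5\eps|V_j|$.

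The main obstacle is a cycle $\M$-component $x_{i_1}x_{i_2}\cdots x_{i_k}$, where each $U_{i_j}$ must be split between its two incident $\M$-edges. A naive sequential approach would destroy the minimum-degree condition needed for the second form of Lemma~\ref{lem:inner-paths}, since removing half of a cluster cuts a typical vertex's neighbourhood by roughly half, potentially well below $d|V_{i_j}|$. I bypass this by pre-processing: partition each $U_{i_j}$ randomly and independently into halves $U_{i_j}^-,U_{i_j}^+$ of size $\ell/2$ \emph{before} constructing any $\M$-path. A Chernoff-type concentration bound together with a union bound over the adjacent clusters and their vertices shows that with positive probability, simultaneously for every $j$, every typical vertex of $V_{i_{j+1}}$ (including $v_{e_j}$) retains at least $(d/2-\eps^{1/2})|V_{i_j}|$ colour-$c_j$ neighbours in each half of $U_{i_j}$, and symmetrically. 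Fixing such a partition, for each $\M$-edge $e_j=x_{i_j}x_{i_{j+1}}$ in turn I apply the second form of Lemma~\ref{lem:inner-paths} to the pair $(V_{i_j},V_{i_{j+1}})$ with $U_1=U_{i_j}^+$, $U_2=U_{i_{j+1}}^-$, $v_1=u_{e_j}$, $v_2=v_{e_j}$ and $k=\ell/2+1$. All hypotheses are met by the random partition, and the resulting paths $P_{e_j}$ are internally vertex-disjoint by construction, since they use disjoint halves of each cluster.
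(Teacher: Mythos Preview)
There is a genuine gap, and it appears in every phase of your argument. You repeatedly claim that a typical vertex $v$ of $(V_i,V_j)$, which has $\deg_c(v,V_j)\ge (d-\eps)|V_j|$, still has at least $c\eps|V_j|$ neighbours inside the working set $U_j$ after you have removed the $\Cn$-vertices and the vertices used by earlier paths. But the set $V_j\sm U_j$ has size on the order of $\eps^{1/4}|V_j|$ (this is exactly the scale of $b(x_j)$ and of $|V_j|-\ell$), and the parameter hierarchy \eqref{eq:params} only guarantees $\sqrt{\eps}\ll d$, not $\eps^{1/4}\ll d$. Concretely, nothing prevents one from choosing $d=\mu^2$, in which case $\eps^{1/4}$ may be as large as $\mu/20$, dwarfing $d$. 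So $(d-\eps)|V_j|-\eps^{1/4}|V_j|$ can be negative, and both your ``generous slack'' in the non-$\M$ phase and your minimum-degree check $\delta(G[U_i,U_j])\ge 5\eps|V_j|$ for the $\M$-edges (single or cyclic, with or without the random halving) fail outright.

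The paper handles this by first setting aside two small random reserve sets $S^1,S^2\subset V\sm V(\Cn)$ of density $p=2\sqrt{\eps}$ via \Cref{prop:sample}. Because $\sqrt{\eps}(d-\eps)\ge 60\eps$ (this \emph{does} follow from $\eps<d^2/4000$), every typical vertex has at least $6\eps|V_i|$ colour-$c$ neighbours in each $S^b_i$. During the non-$\M$ phase the paths are routed through $V_i\sm(W\cup S^1\cup S^2)$ after a two-step detour into $S^1$, so $S^1$ and $S^2$ stay essentially intact. During the $\M$ phase, $S^2_i$ is kept untouched until the \emph{last} $\M$-edge at $x_i$ is processed; hence $U_i$ always contains either $S^2_i$ or almost all of $S^1_i$, and this is what supplies the minimum degree $5\eps|V_i|$ needed for the second part of \Cref{lem:inner-paths}. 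Your random-halving idea is in the right spirit, but it cannot work unless you first arrange for the degree into the available set to be at the $\sqrt{\eps}$ scale rather than the $\eps^{1/4}$ scale; that is precisely the role of the reserve sets.
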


		\begin{proof}
			Let us first apply \Cref{prop:sample} with $p=2\sqrt{\eps}$ and $B=V(\Cn)$ to get a set $S^1$ with the properties given in the statement of the proposition, and then apply it again with the same $p$ and $B=V(\Cn)\cup S^1$ to get another such set $S^2$. This is possible because $V_0\subset V(\Cn)$ holds, and we also have $|V(\Cn)\cap V_i|\le 6\sqrt{\eps}|V_i|$, and thus $|S^1\cap V_i| \le 4\sqrt{\eps}|V_i|$ for every $i\in[m]$. Let $S^b_i = S^b\cap V_i$ for every $i\in[m]$ and $b\in[2]$. Then
			\begin{enumerate}[\upshape (a)]
				\item 
					$|S^b_i| \le 4\sqrt{\eps}|V_i|$ for every $i\in[m]$ and $b\in[2]$, and
				\item \label{itm:typical-deg} 
					for every edge $x_ix_j$ in $\HH$ of colour $c$ and every vertex $v\in V_j$ with typical degree in the regular pair $(V_i,V_j)$, we have $\deg_{c}(v,S^b_i)\ge 6\eps |V_i|$ for $b\in[2]$. 
			\end{enumerate}
			To see \ref{itm:typical-deg}, note that every such vertex $v$ of typical degree satisfies $\deg_c(v,V_i)\ge (d-\eps)|V_i| > 60\sqrt{\eps}|V_i|$ (using $d \overset{\eqref{eq:params}}{>} 61\sqrt{\eps}$), so by \Cref{prop:sample}, $\deg_{c}(v,S^b_i)\ge \sqrt{\eps}(d-\eps)|V_i| \overset{\eqref{eq:params}}{>} 60\eps|V_i|$. 
			
			\medskip
			
			Let us first consider the edges $e_1,\dots,e_s$ of $\HH \sm \M$. We will find the $u_k$-$v_k$ paths $P_k$ (where $u_k v_k$ is the edge in $\Ch$ corresponding to $e_k$, as obtained in \Cref{cla:ch}) one by one so that for every $k$, the vertex set of $\PP_k= \bigcup_{j=1}^{k-1} P_j$ is disjoint from each $S^2_i$, and intersects each $S^1_i$ in at most $k-1$ vertices. Suppose we have already found $P_1,\dots,P_{k-1}$. Let us also assume that $u_k\in V_i$ and $v_k\in V_j$ (so $e_k=x_ix_j$), and let $c$ be the colour of $e_k$.
			
			If $\om(e_k)=0$, then there is nothing to do: we can take $P_k=u_kv_k$. If $\om(e_k)=1$, then $\deg_c(u_k,S^1_j\sm V(\PP_k)) \ge 4\eps|V_j|-k \overset{\eqref{eq:params}}{\ge} \eps |V_j|$ (using $\eps|V_i|> \eps n/(2m) > m^2$) and similarly, $\deg_c(v_k,S^1_i\sm V(\PP_k)) \ge \eps|V_i|$. Hence, by regularity, we can find adjacent vertices $u\in S^1_i\sm V(\PP_k)$ and $v \in S^1_j\sm V(\PP_k)$ such that $P_k=u_kvuv_k$ is a $c$-coloured path, as needed.
			
			So suppose $\om(e_k)>1$. Let $W=V(\Cn)\cup S^1\cup S^2 \cup V(\PP_k)$ be the set of ``forbidden'' vertices. We will again need neighbours $u\in S^1_i\sm V(\PP_k)$ and $v\in S^1_j\sm V(\PP_k)$ of $v_k$ and $u_k$ respectively, but this time we want to apply \Cref{lem:inner-paths} to connect them with a $u$-$v$ path of the right length that avoids $W$.
			
			We have seen above that $\deg_c(u_k,S^1_j\sm V(\PP_k)) \ge \eps |V_j|$. Also,
			\[ 
				|V_i \sm W| \ge |V_i| - |V_i\cap V(\Cn)| - |S^1_i| - |S^2_i| - b(x_i) \ge  (1 - 14\sqrt{\eps} - \eps^{1/4})|V_i| \overset{\eqref{eq:params}}{\ge} |V_i|/2, 
			\] 
			so by regularity, there is a neighbour $v\in S^1_j\sm V(\PP_k)$ of $u_k$ such that $\deg_c(v,V_i\sm W)\ge (d-\eps)|V_i|/2 \overset{\eqref{eq:params}}{\ge} \eps |V_i|$. 
			Similarly, there is a neighbour $u\in S^1_i\sm V(\PP_k)$ of $v_k$ such that $\deg_c(u,V_j\sm W)\ge \eps |V_i|$. As $\om(e_k) \le \eps^{1/4}|V_i| \overset{\eqref{eq:params}}{\le} (1-\sqrt{\eps})|V_i\sm W|$, 
			we can apply \Cref{lem:inner-paths} (with $U_1 = V_i \sm W$ and $U_2 = V_j \sm W$) to find a $c$-coloured $v$-$u$ path $P'$ of order $2\om(e_k)$ that is internally vertex-disjoint from $W$. But then $P_k=u_kvP'uv_k$ is a path satisfying our requirements.
			
			\medskip
			
			Finally, let $e_{s+1},\dots,e_{s+t}$ be the edges of $\M$. Note that each vertex of $\HH$ is incident to exactly one or two of these edges. Using the same notation as before, we will find the $u_k$-$v_k$ paths $P_k$ so that $\PP_k$ is disjoint from $S^2_i$ unless $e_k=x_i x_j$ is the last edge at $x_i$ (according to the ordering), and similarly for $x_j$. 
			
			Fix $k$, and let $U_i=V_i\sm (V(\Cn)\cup V(\PP_k))$ if $e_k$ is the last edge at $x_i$, and let $U_i=V_i\sm (V(\Cn)\cup V(\PP_k)\cup S^2_i)$ otherwise. 
			Using $|S^2_i|\le \ell/2$ and the assumption that $\eps$ is small (see \eqref{eq:params}), it is easy to check from the definitions that we have
			$  |U_i| \ge \om(e_k) \ge \ell/2\ge |V_i|/3$.  
			We similarly get $|U_j|\ge  \om(e_k) \ge |V_i|/3$ for the analogously defined $U_j$. 
			
			We want to use \Cref{lem:inner-paths} to find the required $u_k$-$v_k$ path $P_k$ of order $2(\om(e_k)+1)$. As $\min\{|U_i\cup\{u_k\}|,|U_j\cup\{v_k\}|\}\ge \om(e_k)+1$, we just need to check that $\delta(G[U_i,U_j]) \ge 5\eps|V_i|$. This follows from the properties of $S^1_i$ and $S^2_i$: If $e_k$ is the last edge at $x_i$, then $S^2_i\subset U_i$, and otherwise all but $k$ vertices of $S^1_i$ are in $U_i$. Either way, by~\ref{itm:typical-deg} we obtain $\deg_c(v,U_i)\ge 6{\eps}|V_i|-k \ge 5\eps|V_i|$ for every $v\in U_j$, and similarly, $\deg_c(u,U_j)\ge 5\eps|V_j|$ for every $u\in U_i$, as needed.
			(Here we also used that the vertices of non-typical degree are all in $V(\Cn)$ by \Cref{cla:bad-vertices}.)
		\end{proof}
		
		Now for every $e\in E(\HH)$, we replace the edge $u_ev_e$ with the path $P_e$ in the appropriate cycle of~$\Ch$. This gives us $\qrest+2 \leq (1/\mu + 200)r^2$ monochromatic cycles that cover all vertices in $V_0$, and (by the definition of the function $\om$) $|V_i|$ vertices in each $V_i$. In other words, we find a monochromatic cycle partition of $G$, as needed.
	\end{proof}
\section{The structural lemma}\label{sec:lemmas}

	Let us now prove the main structural theorem from \Cref{sec:main}.

	Our proof makes use of a classical result of Bondy and Simonovits on the extremal number of even cycles.

	\begin{thm}[Bondy--Simonovits, \cite{BS74}] \label{thm:exact-length-cycles}
		Let $G$ be a graph on $n$ vertices with at least $e$ edges, and let $\ell$ be such that $\ell \le e/(100n)$ and $\ell n^{1/\ell} \le e/(10n)$. Then $G$ contains a cycle of length exactly $2\ell$.
	\end{thm}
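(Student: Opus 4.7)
The plan is to follow the classical argument of Bondy and Simonovits: first reduce to a subgraph with large minimum degree, then run BFS from a vertex and exploit the fact that the absence of a $C_{2\ell}$ forces the BFS layers to grow too quickly.

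First I would apply the standard degeneracy reduction, iteratively deleting vertices of degree less than $d := e/n$. Each deletion removes fewer than $d$ edges, so with at most $n$ deletions we remove strictly fewer than $nd = e$ edges, hence a non-empty subgraph $H \subseteq G$ with $\delta(H) \geq e/n$ survives. The two hypotheses translate into the clean bounds $\delta(H) \geq 100\ell$ and $\delta(H) \geq 10\ell \cdot n^{1/\ell}$. The first bound gives us plenty of room compared to $\ell$, while the second is what will drive the geometric growth below.

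Next, fix an arbitrary vertex $v \in V(H)$ and consider the BFS levels $L_0 = \{v\}, L_1, L_2, \ldots$ at distance exactly $i$ from $v$. The core claim is that if $H$ contains no cycle of length exactly $2\ell$, then the layer sizes satisfy roughly $|L_{i+1}| \geq \frac{\delta(H)}{2\ell} |L_i|$ for all $i \leq \ell-1$. Indeed, each $u \in L_i$ sends $\delta(H)$ edges out, and only a bounded (in terms of $\ell$) number of them can point ``backwards or sideways'' into $L_{\leq i}$: if too many did, then picking two different BFS-paths from $v$ to a common back-neighbour together with a suitable third path produces a $\theta$-graph, whose three pairwise path-length sums yield cycles of three distinct lengths, and one can tune the parities and lengths so that one such cycle has length exactly $2\ell$. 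Hence the bulk of the edges out of $L_i$ land in $L_{i+1}$, giving the geometric growth. Iterating, $|L_\ell| \geq (\delta(H)/(2\ell))^\ell \geq (5 n^{1/\ell})^\ell > n$, a contradiction.

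The main obstacle will be the $\theta$-graph step: pinning the resulting cycle length to \emph{exactly} $2\ell$, rather than merely to some even number in a range, is the delicate heart of the Bondy--Simonovits argument. This typically requires a careful parity analysis leveraging the fact that two BFS paths from $v$ to the same vertex have equal lengths (hence equal parity), and that by varying the choice of non-tree closing edge within the relevant BFS layers one can realise every even cycle length in a sufficiently large window — in particular $2\ell$ — given the slack provided by $\ell \leq e/(100n)$.
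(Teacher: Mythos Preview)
The paper does not prove this theorem at all: it is quoted verbatim as a classical result of Bondy and Simonovits with a citation to \cite{BS74}, and is then used only through the immediate \Cref{cor:exact-length-cycles}. So there is no ``paper's own proof'' to compare against.

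As for your sketch itself, it follows the genuine Bondy--Simonovits strategy (pass to a subgraph of large minimum degree, grow BFS layers, and use a $\theta$-graph argument to force a $C_{2\ell}$), but what you have written is a plan rather than a proof. You explicitly flag the $\theta$-graph step as ``the main obstacle'' and do not carry it out; that step is exactly where all the work lies. In particular, your claimed inequality $|L_{i+1}| \ge \frac{\delta(H)}{2\ell}|L_i|$ is not what one proves directly --- the actual argument bounds the number of edges inside and between consecutive layers by showing that the bipartite graph $H[L_i, L_{i+1}]$ (and the graph $H[L_i]$) cannot contain a subgraph whose average degree on each side exceeds roughly $2\ell$, via the lemma that such a bipartite graph contains paths of every length up to its order between any two prescribed endpoints. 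Your ``tuning parities'' description does not capture this mechanism. If you intend to supply a self-contained proof, you would need to state and prove that auxiliary path lemma and then run the layer-by-layer edge count carefully; otherwise, citing \cite{BS74} as the paper does is the appropriate move.
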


	More precisely, we need the following immediate corollary:
	
	\begin{cor} \label{cor:exact-length-cycles}
		Let $\ell \ge  (2 / \log 10) \log n$ be even. Then every graph on $n$ vertices with average degree at least $100\ell$ contains a cycle of length $\ell$.
	\end{cor}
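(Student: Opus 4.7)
The plan is to derive this corollary by a direct application of Theorem~\ref{thm:exact-length-cycles}. Since $\ell$ is even, we can set $\ell' = \ell/2$ and aim for a cycle of length exactly $2\ell' = \ell$. From the average degree assumption we get that $G$ has at least $e \defeq 50 \ell n$ edges, so it suffices to check that the pair $(e, \ell')$ satisfies the two numerical hypotheses of Theorem~\ref{thm:exact-length-cycles}.

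The first hypothesis, $\ell' \le e/(100n)$, becomes $\ell/2 \le 50\ell n / (100n) = \ell/2$, which holds with equality. The second hypothesis, $\ell' n^{1/\ell'} \le e/(10n)$, becomes $(\ell/2) n^{2/\ell} \le 5 \ell$, i.e.\ $n^{2/\ell} \le 10$. Taking logarithms, this is equivalent to $\ell \ge 2 \log n / \log 10$, which is precisely the assumption on $\ell$ in the corollary.

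There is no real obstacle here: the whole argument is a one-line substitution together with a short verification of the two inequalities, so the proof will consist of essentially just these two checks. The only point worth emphasizing is that the parity condition on $\ell$ is needed so that $\ell' = \ell/2$ is a genuine integer, since Theorem~\ref{thm:exact-length-cycles} produces cycles of even length $2\ell'$.
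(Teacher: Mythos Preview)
Your proposal is correct and is exactly the intended derivation: the paper states this as an ``immediate corollary'' of Theorem~\ref{thm:exact-length-cycles} without writing out a proof, and the verification you give (setting $\ell'=\ell/2$, $e=50\ell n$, and checking the two inequalities) is precisely the one-line substitution the authors have in mind.
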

  
	The main idea of the proof of \Cref{thm:struct} is to show that if $G$ is not \robmat of type 1, then it has a bipartition $\{X,Y\}$ such that $G[X,Y]$ is essentially \robmat of type 2, except it might be unbalanced. We use \Cref{cor:exact-length-cycles} to balance out this bipartite subgraph by covering some vertices of $G$ with cycles induced by $X$ and $Y$.
	  
  	\begin{proof}[Proof of \Cref{thm:struct}]
		Let us assume that $G$ is not $(\mu,20\mu)$-\robmat of type 1. The following claim provides us with useful information regarding the structure of $G$. Its proof, while somewhat technical, is routine.
  
		\begin{claim}
			There is a partition $\{X, Y\}$ of the vertices with the following properties.
			\begin{enumerate}[\upshape (a)]
				\item \label{itm:X-Y-size} 
					$|X| \ge |Y| \ge \frac{n}{2} - 5400\mu n$,
				\item \label{itm:min-deg} 
					$\delta(G[X, Y]) \ge \frac{n}{16} - 10800\mu n$,
				\item \label{itm:almost-min-deg} 
					all but $10800\mu n$ vertices have degree at least $\frac{2n}{5}-10800\mu n$ in $G[X,Y]$, and
				\item \label{itm:X-max-deg} 
					if $|X| > \frac{n}{2}$ we have $\Delta(G[X]) \le \frac{n}{16}$.
			\end{enumerate}        
		\end{claim}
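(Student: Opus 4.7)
The starting observation is that $G$ fails to be $(\mu, 20\mu)$-\robmat of type~1. Since $\delta(G) \ge n/2 + 1200 r \log n > (1/2 - \mu)n$, the minimum-degree clause of the definition is satisfied, so the sparsity clause must fail: there is a set $A \subset V(G)$ with $|A| = (1/2 - 20\mu)n$ and $e(A) < 20\mu n^2$. Writing $B = V \setminus A$, one has $|B| = (1/2 + 20\mu)n$, and the plan is to take the initial bipartition $(X_0, Y_0) = (B, A)$ and adjust it by moving a small set of $O(\mu n)$ vertices.

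The sparsity of $A$ gives two elementary double-counting facts. From $\sum_{v \in A} \deg(v, A) = 2 e(A) < 40\mu n^2$, only $O(\mu n)$ vertices $v \in A$ satisfy $\deg(v, A) \ge n/16$ (and only $O(\mu n)$ satisfy $\deg(v, A) \ge 2n/5$). On the other hand, $e(A, B) \ge |A|\cdot n/2 - 2 e(A) \ge n^2/4 - O(\mu n^2)$ combined with $|A| \cdot |B| \le n^2/4$ and averaging over $B$ shows that only $O(\mu n)$ vertices $v \in B$ satisfy $\deg(v, A) < n/16$ (and only $O(\mu n)$ satisfy $\deg(v, A) < 2n/5$). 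The explicit constants, which can be computed by inserting the precise thresholds, fit comfortably inside the budgets $5400\mu n$ and $10800\mu n$ specified in the claim.

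The first modification moves to $X$ each $v \in A$ with $\deg(v, A) > 7n/16$ (otherwise $\deg(v, X) = \deg(v) - \deg(v, A)$ would drop below the $n/16 - 10800\mu n$ threshold required for~(b) on the $Y$-side), and moves to $Y$ each $v \in B$ with $\deg(v, A) < n/16$ (these vertices would violate~(b) on the $X$-side). Both sets have size $O(\mu n)$, so the sizes of $X$ and $Y$ shift by $O(\mu n)$, remaining inside the slacks $|B| - |A| = 40\mu n$ and $|A| - (n/2 - 5400\mu n) = 5380\mu n$. If $|X| - |Y|$ threatens to drop below $0$, additional $A$-vertices with $\deg(v, A) \in [n/16, 7n/16]$ (of which there are $O(\mu n)$ further candidates) are moved to $X$ to rebalance while still satisfying~(b). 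Property~(c) then follows from the same double counts with threshold $2n/5$ in place of $n/16$, the exceptional vertices absorbed into the $10800\mu n$ allowance.

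The delicate condition is~(d). If at this point $|X| > n/2$, iteratively move to $Y$ any $v \in X$ with $\deg(v, X) > n/16$, halting as soon as either no such $v$ exists or $|X|$ drops to $\lceil n/2 \rceil$; in the latter case~(d) becomes vacuous while $|X| \ge |Y|$ is preserved. The iteration performs at most $|X_0| - n/2 + O(\mu n) = O(\mu n)$ moves, so each remaining vertex's degree to the other part shifts by a cumulative $O(\mu n)$, absorbed by the $10800\mu n$ slack in~(b) and~(c); the moved vertices themselves had $\deg(v, X) > n/16$ at the moment of moving and so satisfy~(b) from the $Y$-side too. The main obstacle throughout is the bookkeeping needed to track the compounded $O(\mu n)$ error terms through the two phases of moves, and the sparsity budget $e(A) < 20\mu n^2$ is chosen precisely to leave the right amount of room.
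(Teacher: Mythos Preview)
Your overall strategy is close to the paper's, but the rebalancing step after the first modification has a genuine gap. You assert that if $|X|-|Y|$ drops below zero you can restore $|X|\ge|Y|$ by moving to $X$ some $A$-vertices with $\deg(v,A)\in[n/16,7n/16]$, ``of which there are $O(\mu n)$ further candidates''. The $O(\mu n)$ here is an upper bound, not a lower one; nothing guarantees that any such vertices exist. Concretely, take $A$ independent (so $e(A)=0$ and $a=0$), and arrange about $22\mu n$ vertices of $B$ to have $\deg(\cdot,A)$ just under $n/16$ while the remaining $B$-vertices are complete to $A$. One checks this is compatible with $\delta(G)\ge n/2+1200r\log n$ and with $|A|=(1/2-20\mu)n$. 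Then $b\approx 22\mu n$, so after your first modification $|X|-|Y|\approx 40\mu n-44\mu n<0$, yet there are no $A$-vertices with $\deg(v,A)\in[n/16,7n/16]$ at all. Your argument stalls here.

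The paper sidesteps this by organising the partition differently. It first trims the sparse set to $S_1$ of size exactly $(1/2-500\mu)n$ with $\Delta(G[S_1])\le n/12$, then defines $T$ as the vertices outside $S_1$ sending at least $2n/5$ edges into $S_1$, and shows the residue $S_2$ has at most $5400\mu n$ vertices. With $S=S_1\cup S_2$ it lets $X_0$ be the \emph{larger} of $S$ and $T$, so $|X_0|\ge|Y_0|$ is automatic; it then moves to $Y_0$ the set $Z$ of vertices of $X_0$ with at least $n/16$ neighbours in $X_0$ (or a subset of size $|X_0|-n/2$ if $Z$ is that large). Either this balances the partition, making (d) vacuous, or it empties the high-internal-degree vertices, giving (d) directly. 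The key difference is that the paper never commits to $X_0=B$; letting the larger of $S,T$ play the role of $X_0$ absorbs exactly the case that breaks your rebalancing. (A minor additional point: when $n$ is odd, halting your (d)-iteration at $|X|=\lceil n/2\rceil$ still leaves $|X|>n/2$, so (d) is not vacuous there.)
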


  		\begin{proof}
  			As $\delta(G)\ge {n}/{2}$, the assumption that $G$ is not type~1 $(\mu,20\mu)$-\robmat implies the existence of a set $S_0$ of size at least $({1}/{2} - 20\mu)n$ that spans fewer than $20\mu n^2$ edges. This set $S_0$ cannot contain more than $480\mu n$ vertices $v$ satisfying $\deg_G(v,S_0)\ge {n}/{12}$, so there is a subset $S_1\subset S_0$ of size exactly $({1}/{2}-500\mu)n$ such that $\Delta(G[S_1])\le {n}/{12}$. 
  
  			Now let $T$ be the set of vertices not in $S_1$ that send at least ${2n}/{5}$ edges into $S_1$, and let $S_2$ be the set of vertices not in $S_1\cup T$. If $q$ denotes the size of $S_2$, then we have $|S_1|={n}/{2}-500\mu n$ and $|T|={n}/{2}+500\mu n-q$. We can bound $q$ by double-counting the edges of $G$ between $S_1$ and $T\cup S_2$. Indeed, counting from $S_1$, the number of such edges is at least $|S_1| \cdot {n}/{2}-40\mu n^2= ({n}/{2})^2-290\mu n^2$. On the other hand, counting from $T\cup S_2$, there are at most 
			\[ 
				|T||S_1|+|S_2|\cdot \frac{2n}{5} \le \left(\frac{n}{2}+500\mu n-q\right) \frac{n}{2} +q\cdot \frac{2n}{5} = \left(\frac{n}{2}\right)^2 + 250\mu n^2 - q\cdot \frac{n}{10} 
			\]
			such edges. Putting these together, we get that $q\cdot {n}/{10}\le 540\mu n^2$, so $q \le 5400 \mu n$. 
  
  			Setting $S=S_1\cup S_2$, we obtain the following bounds on the degrees in $G[S,T]$. 
			\begin{align} \label{eq:degreecond}
			\begin{split}
				\text{For every $v\in T$, } ~\quad \deg(v,S) &\ge \frac{2n}{5};   \\
				\text{for every $v\in S_1$, } \quad \deg(v,T) &\ge \frac{n}{2}-\Delta(G[S_1])-|S_2| \ge \frac{2n}{5}-5400\mu n;   \\
				\text{for every $v\in S_2$, } \quad \deg(v,T) &\ge \frac{n}{2}-\frac{2n}{5}-|S_2| >\frac{n}{16} - 5400\mu n  .
			\end{split}
			\end{align}

  			Now let $X_0$ be the larger of the two sets $S$ and $T$, and let $Y_0$ the smaller one. We then have $|X_0|={n}/{2}+k$ and $|Y_0|={n}/{2}-k$, where $k=|500\mu n-q|\le 5400 \mu n$. 
  			Let $Z\subset X_0$ be the set of vertices in $X_0$ with at least ${n}/{16}$ neighbours in $X_0$.
  
  			If $|Z|\ge k$, then let $Z_0\subset Z$ be a subset of size $k$. We claim that $X=X_0\sm Z_0$ and $Y=Y_0\cup Z_0$ satisfy the conditions. Indeed, as $|Z_0|\le 5400\mu n$, we get $\deg(v,X)\ge \deg(v,X_0)-5400\mu n$ for every vertex $v$ in $Y$, and $\deg(v,Y)\ge \deg(v,Y_0)$ for every $v\in X$. Combining this with \eqref{eq:degreecond}, we see that $\delta(G[X,Y])\ge {n}/{16}-10800\mu n$, and every vertex not in $S_2\cup Z_0$ has degree at least ${2n}/{5}-10800\mu n$, establishing \ref{itm:min-deg} and \ref{itm:almost-min-deg}. As $|X|=|Y|={n}/{2}$, \ref{itm:X-Y-size} and \ref{itm:X-max-deg} are also satisfied.
  
  			If $|Z|<k$, then we take $Z_0=Z$ instead. The same argument shows that \ref{itm:min-deg} and \ref{itm:almost-min-deg} hold. The definition of $Z$ implies $\Delta(G[X])<{n}/{16}$, establishing  \ref{itm:X-max-deg}, while \ref{itm:X-Y-size} holds because $k\le 5400 \mu n$.
		\end{proof}
  
		Let $X$ and $Y$ be as in the claim. If $|X| = |Y|$, then set $H=G[X,Y]$. As we will see, this satisfies the conditions. In the meantime, we may assume that $|X| = \lceil {n}/{2} \rceil + k$ for some $0 \le k\le 5400\mu n$. 

		\medskip
		Let us first consider the case when $k \ge 400 r \log n$. We can write $2k=\ell_1 + \dots + \ell_t$ as the sum of $t \le 400r+1$ even numbers such that ${k}/{(400r)} \le \ell_i \le {k}/{(200r)}$ for every $i$. We will find pairwise disjoint monochromatic cycles $C_1, \dots, C_t$ in $X$, where each $C_i$ has length $\ell_i$. 

		Suppose we have already found $C_1, \dots, C_{i-1}$ with these properties. We want to apply \Cref{cor:exact-length-cycles} to find $C_i$. As $\delta(G)\ge {n}/{2}$, the minimum degree of $G[X]$ is at least $k$, therefore $X$ induces at least ${k|X|}/{2} \ge {kn}/{4}$ edges. On the other hand, \ref{itm:X-max-deg} implies that the vertices of $C_1, \dots, C_{i-1}$ are incident to at most $2k \cdot {n}/{16} = {kn}/{8}$ edges. That is, at least half of the edges in $G[X]$ are not incident to any of the cycles $C_1, \dots, C_{i-1}$, and hence the average degree induced by $X' = X \sm V(C_1 \cup \dots \cup C_{i-1})$ is at least ${k}/{2}$. The average degree of $G[X']$ in the most common colour (say blue) is then at least ${k}/{(2r)} \ge 100\ell_i$, so \Cref{cor:exact-length-cycles} provides a blue cycle of length $\ell_i$, as needed (using $\ell_i \ge \log n \ge (2 / \log 10) \log n$).

		Let $\C$ be the set of cycles $C_1, \dots, C_t$.
		If $n$ is odd, we add another vertex in $X$ as a singleton cycle to $\C$.
		Then $\C$ contains at most $400r+2$ cycles, and $A = X \sm V(\C)$ and $B = Y \sm V(\C)$ satisfy $|A| = |B| = {n}/{2} - k \ge ({1}/{2} - 5400\mu)n$. In this case, we choose $H = G[A, B]$. 

		\medskip
		Finally, suppose $0 \le k < 400r \log n$, and let $\log n \le \ell < \log n + 2$ be even. We can write $\ell + 2k = \ell_1 + \dots + \ell_t$ as the sum of $t \le 200r + 1$ even numbers such that $\log n \le \ell_i \le 2\log n$ for every $i$. We will again find a monochromatic cycle $C_i$ of length $\ell_i$ in $G[X]$ for every $i$, but this time we will also need an $\ell$-cycle $C$ induced by $Y$ to balance out the graph.

		The minimum degree of $G[Y]$ is at least ${n}/{2}+ 1200 r \log n - ({n}/{2}+ k) \ge 100 r \ell$, so the most common colour (say blue) has average degree at least $100 \ell$. By \Cref{cor:exact-length-cycles}, there is a blue cycle $C$ of length $\ell$ in $Y$.

		To find $C_1, \dots, C_t$, we use the same argument as before. Suppose we already have $C_1, \dots, C_{i-1}$. The minimum degree of $G[X]$ is at least $1200 r\log n$, so $X$ induces at least $1200r |X|\log n /2 \geq {300}r n\log n$ edges. Out of these, at most $2k \cdot {n}/{16} ={kn}/{8} \le 50r n \log n$ are incident to some of the cycles $C_1, \dots, C_{i-1}$. Hence, the average degree in $G[X']$, where $X' = X \setminus V(C_1 \cup \dots \cup C_{i-1})$, is at least $200r\log n$, and the average degree in the majority colour (say blue) is at least $200\log n \ge 100\ell_i$. As $\ell_i \ge \log n$, we can use \Cref{cor:exact-length-cycles} to find a blue cycle $C_i$ of length $\ell_i$ in $X'$, as needed.

		Again, let $\C$ be the set of cycles $C, C_1, \dots, C_t$ and possibly a singleton in $X$ (if $n$ is odd). Then $\C$ contains at most $200r+3$ cycles, and $A = X \sm V(\C)$ and $B = Y \sm V(\C)$ satisfy $|A| = |B| = {n}/{2} - k - \ell$.  We set $H=G[A,B]$.

		\medskip
		In either of the cases, $H$ is obtained from $G[X,Y]$ by deleting at most $2k+2\log n+4$ vertices, so each of the degrees can decrease by at most this value compared to \ref{itm:min-deg} and \ref{itm:almost-min-deg}. Assuming $\mu<{1}/{700000}$ and $n>100000$, we have $2k+2\log n+4 \le 10800\mu n + 2\log n + 4 \le {n}/{64}$. Then it is easy to check that $H$ is a balanced bipartite graph on at least ${n}/{2}$ vertices, such that $\delta(H)\ge n/16 - 10800\mu n - n/64 \ge {n}/{32}$, and all but ${n}/{64}$ vertices have degree at least ${n}/{3}$. This $H$ is indeed a $(\mu,20\mu)$-\robmat graph of type 2.
	\end{proof}
	
\section{Sharpness for the minimum degree}
	In this section we prove \Cref{prop:degree-best-possible}, which shows that the minimum degree condition in \Cref{thm:main} is almost best possible.
	
    \begin{proof}[Proof of \Cref{prop:degree-best-possible}]
		Our construction is based on the following claim.
		
		\begin{claim} \label{lem:largedeggirth}
			For every sufficiently large $n$, there is a graph on $n$ vertices with minimum degree at least $\log n$ that does not contain any proper cycle of length shorter than ${\log n}/{(4\log\log n)}$.
		\end{claim}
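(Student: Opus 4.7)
My approach is probabilistic. I consider the Erd\H{o}s--R\'enyi random graph $G = G(n, p)$ with $p = C (\log n)/n$ for a sufficiently large absolute constant $C$ (say $C = 10$). Two standard properties hold with high probability. First, by the Chernoff bound \Cref{lem:che} applied to each vertex-degree and a union bound over the $n$ vertices, every vertex of $G$ has degree between $(C/2)\log n$ and $2C\log n$. Second, setting $g := \log n/(4\log\log n)$, a first-moment computation gives that the expected number of cycles of length at most $g$ is at most
\[
\sum_{\ell=3}^{g} \frac{(np)^\ell}{2\ell} \;\leq\; (C\log n)^g \;=\; \exp\!\bigl((1+o(1))\log n/4\bigr) \;=\; n^{1/4+o(1)},
\]
and similarly the expected number $Y_v$ of such short cycles through any fixed vertex $v$ is at most $n^{-3/4+o(1)}$.

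Rather than trying to delete the vertices lying in short cycles (which would strip too much minimum degree, since the set of such vertices has size roughly $n^{1/4+o(1)}$, much larger than $\log n$), the plan is to destroy the short cycles by edge deletion, thereby keeping the vertex count exactly $n$. For each short cycle, remove one of its edges chosen uniformly at random, independently across cycles; the resulting graph has no cycle of length at most $g$, and each vertex $v$ loses at most $Y_v$ edges. It therefore suffices to show that with high probability $\max_v Y_v \leq (C/2 - 1)\log n$. This follows from a Poisson-type upper tail on $Y_v$: standard random-graph machinery (e.g.\ the factorial-moment method, bounding $\Pr[Y_v\ge k]\le \mathbb{E}[Y_v^{(k)}]/k!$, and invoking Stirling's formula) yields $\Pr[Y_v \geq (\log n)/2] \leq \exp\!\bigl(-\Omega((\log n)^2)\bigr)$, which easily survives a union bound over the $n$ vertices. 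Combining everything, the modified graph has $n$ vertices, girth at least $g \geq \log n/(4\log\log n)$, and minimum degree at least $\log n$, as required.

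The main obstacle is the Poisson-type tail bound for $Y_v$: the indicators that various short cycles are present in $G$ are positively correlated, so naive Chernoff-type bounds do not apply. The resolution is to control the factorial moments of $Y_v$ by carefully accounting for tuples of overlapping cycles (which, since $p$ is small, contribute only lower-order terms), and then exploit the fact that $\mathbb{E}[Y_v]$ is polynomially small in $n$ while $k = (\log n)/2$ is only logarithmic, so $(\mathbb{E}[Y_v])^k/k!$ decays super-polynomially. The reason edge deletion is preferable to vertex deletion here is precisely that one only needs to bound the local quantity $Y_v$ for each vertex, rather than controlling how the global short-cycle vertex set interacts with the neighbourhood of every surviving vertex.
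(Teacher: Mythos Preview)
Your overall strategy matches the paper's: take $G(n,p)$ with $p=\Theta(\log n/n)$, use Chernoff to control all degrees, delete edges to destroy short cycles, and bound the degree loss at each vertex. The difference lies in the deletion scheme and the resulting tail estimate. The paper removes the edges of a \emph{maximal edge-disjoint} family $\mathcal{C}$ of short cycles; then the short cycles through $v$ that lie in $\mathcal{C}$ are themselves pairwise edge-disjoint, so the degree loss at $v$ is at most twice the maximum number of edge-disjoint short cycles through $v$. Because edge-disjoint cycles are present in $G(n,p)$ independently, a plain union bound over tuples $(k_1,\dots,k_\ell)$ of cycle lengths gives the one-line estimate $\Pr[\mathcal{B}_v]\le ((np)^g/n)^{\ell}\le n^{-(\log n)/2}$.

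Your scheme (delete one edge from every short cycle) forces you to bound $Y_v$, the total number of short cycles through $v$, rather than the edge-disjoint count. The factorial-moment bound $\Pr[Y_v\ge k]\le \mathbb{E}\binom{Y_v}{k}$ now has to absorb contributions from $k$-tuples of overlapping cycles, and this is where your write-up hand-waves. For instance, a theta-graph with $m\approx \sqrt{k}$ internally disjoint $v$--$w$ paths of length $g/2$ supports $\binom{m}{2}\ge k$ short cycles through $v$ but has only $\Theta(\sqrt{k}\,g)$ edges; such configurations do not fall under ``lower-order terms'' in any obvious way, and the claimed $\exp(-\Omega((\log n)^2))$ tail is not what a careful accounting of them yields (one gets something closer to $\exp(-\Omega((\log n)^{3/2}))$, which still beats the union bound but is not what you asserted). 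So the plan can very likely be pushed through, but the step you label ``standard'' is genuinely the crux and needs a real argument. The paper's edge-disjoint trick is precisely the device that makes this step trivial.
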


		\begin{proof}
			During this proof, all cycles will be proper.
			Our construction is probabilistic. We start with the Erd\H{o}s--R\'enyi random graph $G(n,p)$ on $n$ vertices, where any two vertices are connected by an edge independently with probability $p= {8\log n}/{n}$. Next, we take a maximal collection $\C$ of edge-disjoint cycles of length less than $k =  {\log n}/{(4\log\log n)}$. By maximality, $G'=G(n,p) \sm E(\C)$ has no cycles whose length is shorter than $k$. So it is enough to show that $\delta(G')\ge \ell=\log n$ with positive probability.
			
			To see this, first note that $\deg_{G(n,p)}(v) \sim \Bin(n-1,p)$ for every vertex $v$, so we can use the Chernoff bounds to bound the probability that the degree is small. Let $\calA$ be the event that some vertex of $G(n,p)$ has degree less than $3\log n$. By \Cref{lem:che} (with $a=5/8$) and a union bound over the vertices, we get $\Pr[\calA] \le n \cdot e^{-(25/16)\log n}< {1}/{3}$ for large enough $n$.
			
			Now for every vertex $v$ of $G(n,p)$, let $\calB_v$ be the event that $v$ is incident to at least $\ell$ pairwise edge-disjoint cycles of length shorter than $k$. Note that there are at most $n^{(k_1-1)+\dots + (k_{\ell}-1)}$ potential sets of $\ell$ edge-disjoint cycles of lengths $k_1,\dots,k_{\ell}$ incident to $v$, and each of them is a subgraph of $G(n,p)$ with probability $p^{k_1+\dots+k_{\ell}}$. Hence
			\[  
			\Pr[\calB_v] \le \sum_{3 \le k_1, \dots, k_{\ell} < k} n^{k_1 + \dots + k_{\ell} - \ell}p^{k_1 + \dots + k_{\ell}} \le \left( \frac{1}{n} \cdot \sum_{k'<k}  (np)^{k'} \right)^{\ell} \le \left( \frac{(np)^k}{n} \right)^{\ell} \le \left( \frac{1}{\sqrt{n}} \right)^{\log n}
			\]
			using $(np)^k= (8\log n)^{\log n/(4 \log\log n)}\le \sqrt{n}$. This means that the probability that $\calB=\bigcup \calB_v$ holds is at most $n\cdot n^{-(\log n)/2}< {1}/{3}$ for large enough $n$.
			
			So with positive probability, neither $\calA$ nor $\calB$ occur. But then every vertex $v$ is incident to at least $3\log n$ edges in $G(n,p)$, and at most $2\log n$ of those can appear in $\C$. This implies $\delta(G')\ge \log n$, as needed.
		\end{proof}
		
		Let $A$ and $B$ be disjoint sets, such that $|B| =  {n}/{2} +  {\log n}/{(16\log\log n)}$, and $|A| = n - |B|$. Let $G$ be a graph with vertex set $A \cup B$, where all $A$-$B$ edges are present and are red, and $G[B]$ is a blue graph provided by \Cref{lem:largedeggirth}. So $G[B]$ has minimum degree at least $\log |B| \ge \log n - 2 \ge  {\log n}/{(8\log\log n)}$, and it induces no proper cycle shorter than $ {\log |B|}/{(4\log\log |B|)} \ge \log n / (8 \log \log n)$.
		
		Note that $G$ has minimum degree at least $ {n}/{2} +  {\log n}/{(16\log\log n)}$. Also, every red cycle in $G$ is either a singleton, or it covers an equal number of vertices in $A$ and $B$. Moreover, every blue cycle is either a singleton, an edge, or has length at least $ {\log n}/{(8\log\log n)}$.
		
		Let $\C$ be a collection of vertex-disjoint monochromatic cycles covering all vertices of $G$. If $\C$ contains a proper blue cycle (of length at least $ {\log n}/{(8\log\log n)}$), then the remaining cycles of $\C$ must cover at least $ {\log n}/{(16\log\log n)}$ more vertices in $A$ than in $B$. But $A$ is independent, so this is only possible if $\C$ contains at least $ {\log n}/{(16\log\log n)}$ singletons. So $\C$ cannot contain any proper blue cycle. But then, as $\C$ covers $ {\log n}/{(16\log\log n)}$ more vertices in $B$ than in $A$, it must contain at least $ {\log n}/{(32\log\log n)}$ singletons or blue edges. Hence, in any case, $\C$ consists of at least $ {\log n}/{(32\log\log n)}$ cycles, as desired.
	\end{proof}

\section{Sharpness for the number of cycles}

    The goal of this section is to prove \Cref{prop:cyclelowerbound}, which shows that the number of cycles we use to partition the vertices is best possible, up to a constant factor.

    We start with some preliminary lemmas. 
    \begin{lem} \label{Lemma_min_degree_subgraph}
        Every $n$-vertex graph $G$ with $e(G)\ge (1-\eps^2)  {n^2}/{2}$ has a subgraph $H$ with $\delta(H)\ge (1-2\eps)n$.
    \end{lem}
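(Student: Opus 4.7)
The plan is to take $H$ to be the subgraph of $G$ induced by the vertices of sufficiently high degree. More precisely, I would define the set of ``bad'' vertices
\[
B = \{v \in V(G) : \deg_G(v) < (1-\eps)n\}
\]
and set $H := G - B$. The key step is to show that $B$ is small, namely $|B| \le \eps n$.

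To bound $|B|$, I would use a simple double-counting argument on co-degrees. Since $e(G) \ge (1-\eps^2) n^2/2$, the number of non-edges in $G$ is at most $\eps^2 n^2/2$, so the sum of co-degrees $\sum_{v} (n - 1 - \deg_G(v))$ is at most $\eps^2 n^2$. On the other hand, every $v \in B$ has co-degree at least $\eps n$ (the integrality of co-degrees handles any rounding issue in the definition of $B$), so $|B| \cdot \eps n \le \eps^2 n^2$, yielding $|B| \le \eps n$.

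Finally, for any $v \in V(H)$, deleting the vertices of $B$ decreases the degree of $v$ by at most $|B|$, so
\[
\deg_H(v) \ge \deg_G(v) - |B| \ge (1-\eps)n - \eps n = (1 - 2\eps)n,
\]
as required. The graph $H$ is nonempty since $|V(H)| \ge (1-\eps)n > 0$. I do not expect any real obstacle: this is a standard first-moment argument, and the only minor care required is in the rounding when passing from the definition of $B$ to the lower bound on individual co-degrees.
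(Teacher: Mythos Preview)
Your proposal is correct and follows essentially the same approach as the paper: define the set of low-degree vertices, bound its size via the handshake identity and the edge-count hypothesis, then delete it. The paper phrases the count directly as $2e(G)\le (n-|S|)n+|S|(1-\eps)n$ with the non-strict threshold $\deg(v)\le(1-\eps)n$, which avoids the small rounding wrinkle in your co-degree version (strictly you only get co-degree $>\eps n-1$, not $\ge \eps n$), but this is cosmetic and your argument goes through.
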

    \begin{proof}
        Let $S$ be the set of vertices $v$ in $G$ with $\deg(v)\le (1-\eps)n$. We have $2e(G)\le (n-|S|)n+|S|(1-\eps)n = n^2-\eps |S|n$ which combined with $e(G)\ge {(1-\eps^2) n^2}/{2}$ gives $|S|\le \eps n$. Then $H=G\setminus S$ is a graph with $\delta(H)\ge (1-\eps)n-|S|\ge (1-2\eps)n$.
    \end{proof}

    Our argument uses the following theorem.
    \begin{thm}[Gy\'arf\'as--S\'ark\"ozy~\cite{GS12-rainbow}] \label{thm:gyarfsark}
        Every properly edge-coloured graph $G$ on $n$ vertices with $\delta(G)\le  {n}/{2}$ has a rainbow matching of size $\delta(G)-2\delta(G)^{2/3}$.
    \end{thm}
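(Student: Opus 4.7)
The plan is an extremal argument based on a maximum rainbow matching. Take a rainbow matching $M=\{e_1,\dots,e_m\}$ in $G$ of maximum size $m$, and suppose for contradiction that $m<\delta-2k$ where $k=\lceil\delta^{2/3}\rceil$. Write $U=V(G)\setminus V(M)$; since $\delta\le n/2$ we have $|U|=n-2m\ge n-2\delta\ge 0$, with slack of order $n-2\delta+4k$.

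The structural observation I would set up first concerns \emph{free colours}, i.e.\ colours not used by $M$. Because the colouring is proper, every vertex $v$ sees at least $\delta$ distinct colours on its incident edges, and at most $m<\delta-2k$ of these colours are used by $M$, so $v$ is incident to at least $2k$ free-coloured edges. Combined with the maximality of $M$, every free-coloured edge with an endpoint in $U$ must cross to $V(M)$; otherwise that edge could be added to $M$ to produce a larger rainbow matching immediately.

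The goal is then an \emph{augmenting swap}: find $e_i=u_iv_i\in M$ together with two distinct vertices $u,u'\in U$ and two distinct free colours $\alpha\ne\beta$ such that $uu_i$ has colour $\alpha$ and $u'v_i$ has colour $\beta$. Deleting $e_i$ from $M$ and inserting $uu_i$ and $u'v_i$ yields a rainbow matching of size $m+1$, contradicting maximality. To pinpoint a suitable $e_i$, I would double-count incidences of free-coloured $U$-to-$V(M)$ edges: summing $2k$ over $u\in U$ yields at least $2k|U|$ such edges, so by pigeonhole some $e_i\in M$ has its pair of endpoints $\{u_i,v_i\}$ receiving $\Omega(k|U|/m)$ such edges in total, hence $\Omega(\delta^{2/3})$ free-coloured neighbours in $U$ at each of $u_i,v_i$ (after possibly re-averaging across the two endpoints).

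The main obstacle, and where I expect the delicate part of the argument to lie, is ruling out the degeneracies $u=u'$ and $\alpha=\beta$ when choosing the augmenting pair. Properness does the heavy lifting here: at $u_i$ each free colour appears on at most one free-coloured edge, and likewise at $v_i$, so the list of free-coloured $U$-neighbours of $u_i$ carries as many distinct colours as vertices. The only obstructions to the swap are therefore (i) a single vertex $u\in U$ that is the unique free-coloured neighbour of both $u_i$ and $v_i$, and (ii) colour classes that touch both $u_i$ and $v_i$ through $U$. A careful count, using that a proper colouring has each colour class of size at most $n/2$, bounds each obstruction type by $O(\delta^{2/3})$, while the free-degrees at $u_i$ and $v_i$ are $\ge 2k=\Omega(\delta^{2/3})$. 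The $2\delta^{2/3}$ slack in the hypothesis is precisely calibrated to make the free-degree beat the obstruction count, so a clean choice of $(u,u',\alpha,\beta)$ exists and yields the desired contradiction.
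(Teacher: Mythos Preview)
The paper does not prove this theorem: it is quoted from Gy\'arf\'as and S\'ark\"ozy \cite{GS12-rainbow} and used as a black box, so there is no in-paper argument to compare against. Your sketch therefore has to stand on its own.

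It does not, because of a real gap at the pigeonhole/``re-averaging'' step. Writing $A_i$ and $B_i$ for the free-coloured $U$-neighbourhoods of the two endpoints of $e_i$, summing the $\ge 2k$ free-coloured edges out of each $u\in U$ gives $\sum_i(|A_i|+|B_i|)\ge 2k|U|$. Pigeonhole then yields an $e_i$ with $|A_i|+|B_i|$ large, but your swap needs \emph{both} $|A_i|\ge 1$ and $|B_i|\ge 1$ (in fact $\ge 2$ to dodge the vertex/colour coincidences). The phrase ``re-averaging across the two endpoints'' does not bridge this: nothing rules out that all free-coloured $U$--$V(M)$ edges land on one designated endpoint of every matching edge, so that $B_i=\varnothing$ for all $i$. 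The other endpoints $v_i$ still have $\ge 2k$ free-coloured edges each, but those can all lie inside $V(M)$, and then no single-edge swap exists. Handling this configuration is exactly where the work in the Gy\'arf\'as--S\'ark\"ozy proof lies; a one-edge replacement is not enough.

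The obstruction paragraph is also unjustified as written --- you assert an $O(\delta^{2/3})$ bound invoking ``each colour class has size at most $n/2$'' without any calculation, and it is unclear how that bound enters. In fact, if you \emph{did} have $|A_i|,|B_i|\ge 2$, the obstructions would be trivial to avoid (for any fixed choice from $B_i$ there is at most one forbidden vertex and one forbidden colour in $A_i$), so the substantive difficulty is the first gap, not this one. A minor technicality: with $k=\lceil\delta^{2/3}\rceil$ the contrapositive hypothesis $m<\delta-2\delta^{2/3}$ does not imply $m<\delta-2k$; you want $k=\lfloor\delta^{2/3}\rfloor$ or to work with $\delta^{2/3}$ directly.
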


    \begin{cor} \label{Corollary_rainbow_matching}
        For $\eps>0$ and large enough $n$, every properly edge-coloured $n$-vertex graph $G$ with $\delta(G)\ge  {n}/{2}$ has a rainbow matching of size $(1-\eps) {n}/{2}$.
    \end{cor}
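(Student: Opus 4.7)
The plan is to reduce the problem to a situation where \Cref{thm:gyarfsark} applies directly, by passing to a spanning subgraph $G' \subseteq G$ whose minimum degree is precisely $\floor{n/2}$. Since the matching size $\delta - 2\delta^{2/3}$ guaranteed by \Cref{thm:gyarfsark} is monotonically increasing in $\delta$ for the range of interest, $\floor{n/2}$ is the optimal choice: it is small enough for the hypothesis $\delta(G')\le n/2$ to be satisfied, yet large enough to give a matching of size $(1-\eps)n/2$ once $n$ is large.

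To construct $G'$, I would run a simple iterative edge-deletion procedure. While the current subgraph has $\delta > \floor{n/2}$, every vertex has degree exceeding $\floor{n/2}$, so every edge $uv$ has both endpoints of degree strictly greater than $\floor{n/2}$; pick any such edge and delete it. After the deletion $\deg(u),\deg(v)\ge \floor{n/2}$ still holds, so the minimum degree of the subgraph never drops below $\floor{n/2}$. Since the edge count strictly decreases with each iteration, the process terminates in finitely many steps with a spanning subgraph $G'$ satisfying $\delta(G') = \floor{n/2}$.

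Once $G'$ is in hand, I would invoke \Cref{thm:gyarfsark} on $G'$, which inherits the proper edge-colouring from $G$ and satisfies $\delta(G') = \floor{n/2}\le n/2$. This yields a rainbow matching in $G'$ of size $\floor{n/2} - 2\floor{n/2}^{2/3}$, which is clearly also a rainbow matching of $G$. A routine estimate shows that this quantity exceeds $(1-\eps)n/2$ whenever $n$ is sufficiently large in terms of $\eps$, completing the proof.

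There is no serious obstacle in this argument. The only subtle point is to confirm that the edge-deletion procedure cannot drop the minimum degree below $\floor{n/2}$, which is immediate from the rule used to select the deleted edge.
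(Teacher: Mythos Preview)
Your proposal is correct and follows essentially the same approach as the paper: both pass to a spanning subgraph with minimum degree exactly $\floor{n/2}$ (the paper writes $n/2$), apply \Cref{thm:gyarfsark}, and check that $\floor{n/2}-2\floor{n/2}^{2/3}\ge(1-\eps)n/2$ for large $n$. Your iterative edge-deletion procedure is a more explicit justification of the step the paper states in a single sentence.
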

    \begin{proof}
        Delete edges from $G$ to get a spanning subgraph $H$ with $\delta(H)= {n}/{2}$, and apply \Cref{thm:gyarfsark} to $H$. We get a rainbow matching of size $ {n}/{2}-2({n}/{2})^{2/3}\ge (1-\eps) {n}/{2}$ (for sufficiently large $n$).
    \end{proof}

    \begin{lem}\label{Lemma_rainbow_matching}
        For $\eps>0$ and large enough $n$, every properly edge-coloured $n$-vertex graph $G$ with $e(G)\ge (1-\eps^2) {n^2}/{2}$ has a rainbow matching of size $(1-3\eps) {n}/{2}$.
    \end{lem}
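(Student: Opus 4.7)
The plan is to combine the two preceding lemmas in the obvious way: \Cref{Lemma_min_degree_subgraph} lets us throw away a few low-degree vertices to get a dense subgraph, and \Cref{Corollary_rainbow_matching} then produces a large rainbow matching in what remains.

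In more detail, I would first apply \Cref{Lemma_min_degree_subgraph} to $G$ with parameter $\eps$ to obtain a subgraph $H$ on $h$ vertices with $\delta(H) \ge (1-2\eps)n$. Inspecting the proof of that lemma (or just its conclusion) we have $h \ge (1-\eps)n$. In particular, for $\eps<1/4$,
\[
	\delta(H) \ge (1-2\eps)n \ge \tfrac{h}{2},
\]
so the minimum degree hypothesis of \Cref{Corollary_rainbow_matching} is satisfied. Since $G$ is properly edge-coloured, so is the induced subgraph $H$.

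Next I would apply \Cref{Corollary_rainbow_matching} to $H$ with parameter $2\eps$ (and using that $h$ is large, which follows from $h\ge (1-\eps)n$ and $n$ being large). This yields a rainbow matching in $H$ (hence in $G$) of size at least
\[
	(1-2\eps)\tfrac{h}{2} \ge (1-2\eps)(1-\eps)\tfrac{n}{2} \ge (1-3\eps)\tfrac{n}{2},
\]
which is the desired bound.

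There is no real obstacle here — the two ingredients line up cleanly, and the only thing to watch is that the parameter passed to \Cref{Corollary_rainbow_matching} absorbs the loss of $\eps n$ vertices from the first step; choosing $2\eps$ in place of $\eps$ comfortably handles this, as the arithmetic above shows.
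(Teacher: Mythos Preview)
Your proposal is correct and follows essentially the same route as the paper: apply \Cref{Lemma_min_degree_subgraph} to pass to a high-minimum-degree subgraph, then invoke \Cref{Corollary_rainbow_matching}. The only cosmetic difference is bookkeeping---the paper applies the corollary with parameter $\eps$ and uses $m\ge \delta(H)\ge (1-2\eps)n$, whereas you pass $2\eps$ and use $h\ge (1-\eps)n$; the arithmetic lands in the same place.
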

    \begin{proof}
        Apply \Cref{Lemma_min_degree_subgraph} to get a subgraph $H$ on $m$ vertices with $\delta(H)\ge (1-2\eps)n$.
        Apply \Cref{Corollary_rainbow_matching} to $H$ (using that $\delta(H) \ge m/2$) in order to get a rainbow matching that has size $(1-\eps) {m}/{2}\ge (1-\eps) \delta(H) / 2 \ge (1 - \eps) (1 - 2\eps)n/2 \ge (1 - 3\eps)n / 2$.
    \end{proof}

    The following lemma is a bipartite version of the theorem we are aiming for. 
    \begin{lem}\label{Lemma_bipartite_construction}
        For any $\eps>0$ and sufficiently large $r$, there is an $r$-edge-coloured bipartite graph $G$ with parts $X$ and $Y$ such that
        \begin{enumerate}[\upshape (a)]
            \item \label{itm:bipconstrdeg} 
				$\deg(x)\ge (1-3\eps)|Y|$ for all $x\in X$, and
            \item \label{itm:bipconstrcov} 
				$X$ cannot be covered by fewer than $ {\eps^2r^2}/{4}$ monochromatic components in $G$.
        \end{enumerate}
    \end{lem}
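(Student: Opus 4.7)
The plan is to give an explicit construction. I take $Y = [r]$, set $k = \lfloor \eps r / 2 \rfloor$, and let $X$ consist of all pairs $(\sigma, I)$, where $\sigma$ is a permutation of $[r]$ and $I \subset [r]$ has size $k$. For each $x = (\sigma, I) \in X$ and each $c \in [r] \setminus I$, I include an edge of colour $c$ between $x$ and $\sigma(c) \in Y$ (and no further edges at $x$). Then each $x$ has exactly one edge in every colour outside $I$ and none in the $k$ colours in $I$, so $\deg(x) = r - k \ge (1 - \eps/2)\,r \ge (1 - 3\eps)|Y|$, which handles~\ref{itm:bipconstrdeg}.

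For~\ref{itm:bipconstrcov}, the first observation is that, because each $x$ has at most one edge of each colour, every colour class is a disjoint union of stars centred in $Y$ together with isolated vertices. Hence every monochromatic component that covers some $x \in X$ is either (i) a star indexed by a pair $(c, y) \in [r] \times [r]$, whose leaves in $X$ are exactly $\{x = (\sigma, I) : c \notin I,\ \sigma(c) = y\}$, or (ii) a singleton $\{x\}$ (as the colour-$c$ component of $x$ for any $c \in I$). Any cover of $X$ can therefore be described by a set $T \subset [r] \times [r]$ of star indices together with one singleton for each $x$ not already covered by a star of $T$, and its size equals $|T| + |X \setminus X_T|$, where $X_T$ is the union of the stars indexed by $T$.

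The key step will be to show that $|X \setminus X_T|$ is large whenever $|T| < \eps^2 r^2 /4$ (the case $|T| \ge \eps^2 r^2 / 4$ being immediate). Setting $T_c = \{y : (c, y) \in T\}$ and $\mathrm{bad}(\sigma) = \{c \in [r] : \sigma(c) \in T_c\}$, one checks that $(\sigma, I) \in X \setminus X_T$ exactly when $\mathrm{bad}(\sigma) \subseteq I$; since such an $I$ of size $k$ exists whenever $|\mathrm{bad}(\sigma)| \le k$, this yields $|X \setminus X_T| \ge \#\{\sigma \in S_r : |\mathrm{bad}(\sigma)| \le k\}$. For uniform random $\sigma \in S_r$ we have $\mathbb{E}[|\mathrm{bad}(\sigma)|] = \sum_c |T_c|/r = |T|/r$, so Markov's inequality gives
\[ \Pr\!\left[|\mathrm{bad}(\sigma)| > k\right] \le \frac{|T|/r}{k+1} \le \frac{\eps^2 r^2/4}{r \cdot \eps r/2} = \frac{\eps}{2}, \]
using $k + 1 > \eps r/2$ and $|T| < \eps^2 r^2 / 4$. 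Hence $|X \setminus X_T| \ge (1 - \eps/2)\,r!$, which for $r$ sufficiently large is much larger than $\eps^2 r^2 / 4$, as required.

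The main obstacle in setting up the construction is pushing the lower bound above $r$: if each $x$ were simply joined to all of $Y$ (as with a bare permutation), then the $r$ colour-$c_0$ stars already cover $X$ for any fixed $c_0$, giving only an $O(r)$ bound on the cover number. Forcing every vertex to miss a $k$-subset of colours blocks this trivial cover, and letting $I$ range over all $\binom{r}{k}$ possibilities is what lets the Markov bound on $|\mathrm{bad}(\sigma)|$ convert a small adversary budget $|T| < \eps^2 r^2 / 4$ into a shortfall of uncovered pairs of order $r!$.
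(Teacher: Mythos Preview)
Your proof is correct and takes a genuinely different route from the paper's. The paper builds $X$ from rainbow matchings in an auxiliary properly $r$-edge-coloured complete graph $K_Y$ on $Y=[r]$: a vertex $x\in X$ is a rainbow matching of size $(1-3\eps)r/2$, and $x$ is joined to $y$ in colour $i$ when the matching $x$ contains the colour-$i$ edge at $y$. In that setup each $x$ has exactly two edges per colour it uses, so the non-singleton monochromatic components are indexed by \emph{edges} of $K_Y$ rather than by pairs $(c,y)$; the covering lower bound then reduces to showing that deleting fewer than $\eps^2 r^2/4$ edges from $K_Y$ still leaves a large rainbow matching, for which the paper invokes the Gy\'arf\'as--S\'ark\"ozy theorem on rainbow matchings (via \Cref{Lemma_rainbow_matching}). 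Your construction instead forces each $x$ to have at most one edge per colour, so the components are stars indexed by $(c,y)$, and the covering lower bound becomes the statement that few ``forbidden'' pairs $(c,y)$ cannot hit every permutation on too many coordinates --- which you dispatch with linearity of expectation and Markov. This is more elementary (no external rainbow-matching result needed) at the cost of a larger $|X|$, which is irrelevant for the lemma as stated and for its application to \Cref{prop:cyclelowerbound}.
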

    \begin{proof}
        Let $Y$ be a set of size $r$. Let $K_Y$ be an auxiliary  properly $r$-edge-coloured complete graph on vertex set $Y$.
        Let $X$ be the set of rainbow matchings in $K_Y$ of size $(1-3\eps) {r}/{2}$. 

        The graph $G$ is defined as follows: for any $x\in X$ and $y\in Y$, we add the edge $xy$ to $G$ in colour $i$ if the rainbow matching $x$ of $K_Y$ contains a colour-$i$ edge through $y$. If the rainbow matching $x$ does not contain any edge through $y$, then $xy$ is not present in $G$.

        To see that \ref{itm:bipconstrdeg} holds, notice that every $x\in X$ is connected to all the vertices of $Y$ that appear in the rainbow matching $x$ of $K_Y$. Since the rainbow matching $x$ has size $(1-3\eps) {r}/{2}$, we get $\deg(x)\ge (1-3\eps)|Y|$.

        Let $uv$ be an $i$-coloured edge of $K_Y$, and let $X_{uv}\subset X$ be the set of rainbow matchings containing $uv$. We claim that $T_{uv}=\{u,v\}\cup X_{uv}$ is an $i$-coloured component of $G$. Indeed, $u$ and $v$ are only adjacent to $X_{uv}$ in colour $i$ because $K_Y$ is properly coloured. Also, the matchings in $X_{uv}$ contain no $i$-coloured edges other than $uv$ because they are rainbow. 
        This shows that every monochromatic component of $G$ is either of the form $T_{uv}$ or is a singleton.

        Let $T_1, \dots, T_k$ be any family of $k\le  {\eps^2r^2}/{4}$ monochromatic components in $G$. We will find a vertex in $X$ that is not covered by these monochromatic components. 
        Using the previous paragraph, we may assume that the component $T_i$ has form $T_{u_iv_i}$ for some edge $u_iv_i\in K_Y$. Consider $H=K_Y\setminus\{u_1v_1, \dots, u_kv_k\}$. Then $e(H)\ge (1-\eps^2) {r^2}/{2}$, so by \Cref{Lemma_rainbow_matching}, $H$ has a rainbow matching $M$ of size $(1-3\eps) {r}/{2}$.
        This $M$ thus corresponds to a vertex $x_M\in X$. However, as $M$ does not contain the edge $u_iv_i$ for any $i$, the vertex $x_M$ does not belong to any $T_i$. In other words, $T_1, \dots, T_k$ do not cover the vertex $x_M$, establishing \ref{itm:bipconstrcov}.
    \end{proof}

    We are now ready to prove the main result of this section.

    \begin{proof}[Proof of \Cref{prop:cyclelowerbound}]
        Let us apply \Cref{Lemma_bipartite_construction} with $r-1$ colours to obtain an $(r-1)$-edge-coloured bipartite graph with parts $X$ and $Y$ satisfying \ref{itm:bipconstrdeg} and \ref{itm:bipconstrcov}.
        To construct $G$ from $H$, we blow up all vertices in $Y$ by $n'=n'(\eps,r)$ vertices, and add all the edges inside $Y$ in some previously unused colour. Formally, we introduce a set of vertices $V_y$ of size $n' = {|X|}/{\eps}$ for every $y\in Y$, and set $Y'=\bigcup_{y\in Y} V_y$. The vertices of $G$ are $V(G)=X\cup Y'$. For any $x\in X$ and $v\in V_y$, the edge $xv$ is present in $G$ in colour $i$ precisely when the edge $xy$ is present in $H$ in colour $i$. For $u,v\in Y'$, the edge $uv$ is present in $G$ in colour $r$.

        Let $n$ denote the number of vertices in $G$. Then $n= |X|(1+ {(r-1)}/{\eps})$, so in particular, $|X|\le \eps n - 1$. This together with the definition of $G$ implies that for every vertex $y\in Y'$, we have $\deg_G(y)\ge |Y'| - 1 \ge (1-\eps)n$. Using \ref{itm:bipconstrdeg} we also get that for every $x\in X$,
        \[ 
			\deg_G(x) = \deg_H(x) n' \ge (1-3\eps)|Y|n'= (1-3\eps)|Y'| \ge (1-3\eps)(1-\eps)n \ge (1-4\eps)n. 
		\]

        As $G$ was constructed from $H$ by blowing up $Y$ and then adding some edges in it using a new colour, we see that every monochromatic component of $G$ touching $X$ is of the form $\{x\in T \cap X\}\cup \{v\in V_y:y\in T\cap Y\}$ for some monochromatic component $T$ of $H$. But then any covering of $G$ by fewer than $ {\eps^2(r-1)^2}/{4}$ monochromatic components would give a covering of $X$ in $H$ by fewer than $ {\eps^2(r-1)^2}/{4}$ monochromatic components, contradicting \ref{itm:bipconstrcov}.
    \end{proof}

\subsection*{Acknowledgements}

We would like to thank the anonymous referees for their helpful comments and Louis DeBiasio for stimulating discussions.


\providecommand{\bysame}{\leavevmode\hbox to3em{\hrulefill}\thinspace}
\providecommand{\MR}{\relax\ifhmode\unskip\space\fi MR }
\providecommand{\MRhref}[2]{%
	\href{http://www.ams.org/mathscinet-getitem?mr=#1}{#2}
}
\providecommand{\href}[2]{#2}

\appendix

	\section{Proofs of preliminary results} \label{appendix:proofs-prelims}

		\subsubsection*{Proof of \Cref{lem:inner-paths}}
			\begin{proof} 
				The following claim is the main ingredient of the proof.
				\begin{claim} \label{claim:hamilton}
					Let $W_1 \subseteq V_1$ and $W_2 \subseteq V_2$ satisfy $|W_1|, |W_2| \ge n/10$ and $\delta(G[W_1, W_2]) \ge 3\eps n$. Then $G[W_1, W_2]$ has a cycle of length $2 \min\{|W_1|, |W_2|\}$.
				\end{claim}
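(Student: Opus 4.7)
The plan is to reduce to the balanced case by first passing to a suitable sub-pair, and then to construct a Hamilton cycle there via a rotation-extension argument. Throughout, assume without loss of generality that $|W_1| \le |W_2|$ and set $k = |W_1|$; our goal is a cycle of length $2k$ in $G[W_1, W_2]$.

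\smallskip
\textbf{Step 1 (Inheritance of regularity).}
The first move is to note that $(W_1, W_2)$ inherits regularity from $(V_1, V_2)$ via the slicing lemma. Since $|W_i| \ge n/10 \ge 10\eps |V_i|$, every pair of subsets $X \subseteq W_1, Y \subseteq W_2$ with $|X| \ge 10\eps|W_1|, |Y|\ge 10\eps|W_2|$ automatically has $|X|, |Y| \ge \eps n$, so $\eps$-regularity of $(V_1, V_2)$ forces the density $d(X,Y)$ to lie in $[d-\eps, d+\eps]$. Consequently $(W_1, W_2)$ is $10\eps$-regular of density at least $d - \eps$.

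\smallskip
\textbf{Step 2 (Balancing by random sampling).}
Next, I would pass to a balanced super-regular sub-pair: pick a uniformly random subset $W_2' \subseteq W_2$ with $|W_2'| = k$. For each fixed $v \in W_1$, the random variable $\deg(v, W_2')$ is hypergeometrically distributed with mean $\deg(v,W_2)\cdot k/|W_2| \ge 3\eps n \cdot (1/10)$, so by the standard Chernoff bound (\Cref{lem:che}) this degree is at least $\eps n$ with probability $1 - e^{-\Omega(\eps n)}$. Similarly, using that almost every vertex of $W_2$ has typical degree into $W_1$ in the $10\eps$-regular pair, one shows that with positive probability the sampled $W_2'$ satisfies $\deg(v, W_1) \ge \eps n$ for every $v \in W_2'$ as well. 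A union bound over all vertices fixes a $W_2'$ for which $(W_1, W_2')$ is $\eps'$-super-regular (for $\eps' = O(\eps)$): it is still regular by Step 1 applied to $W_2'$ of size $k \ge n/10$, and by construction has minimum degree $\ge \eps n$.

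\smallskip
\textbf{Step 3 (Hamilton cycle in a super-regular balanced pair).}
It remains to find a Hamilton cycle in the balanced super-regular bipartite graph $G[W_1, W_2']$; this cycle will have exactly $2k$ vertices, as required. I would use the Pósa rotation-extension technique adapted to bipartite graphs. Take a longest path $P = v_0 v_1 \dots v_\ell$ in $G[W_1, W_2']$ and, assuming $P$ is not Hamilton, derive a contradiction as follows. By maximality, all neighbours of $v_0$ lie on $P$, so rotating at each such neighbour produces a family of equally long paths with many distinct endpoints; iterating, the set $E$ of possible endpoints grows past $\eps' k$. Regularity of $(W_1, W_2')$ then forces an edge from $E$ into the (large) set of candidate endpoints obtained by symmetric rotations on the other side, and this edge yields a cycle of length $\ell + 1$ on $V(P)$. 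Super-regularity (minimum degree $\ge \eps n$) ensures this cycle has a neighbour outside $V(P)$, producing a longer path and contradicting maximality. A nearly identical rotation-closure argument converts the Hamilton path thus obtained into a Hamilton cycle.

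\smallskip
The main obstacle is Step 3: the rotation-extension argument for super-regular bipartite pairs, which although standard (it is essentially a one-pair instance of the blow-up lemma) requires careful bookkeeping of the endpoint sets and use of regularity at two scales. Steps 1 and 2 are routine applications of the slicing lemma and Chernoff concentration, so the bulk of the work is in verifying the rotation step cleanly.
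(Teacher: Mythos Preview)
Your approach is correct but takes a genuinely different route from the paper. The paper works directly in the (possibly unbalanced) pair $G[W_1,W_2]$: it takes a longest even path $P=u_1\ldots u_{2\ell}$, uses $\eps$-regularity of $(V_1,V_2)$ to find edges between suitably shifted neighbourhoods of the endpoints (a short four-case analysis) so as to close $P$ into a cycle of length $2\ell$, and then argues that $\ell=\min\{|W_1|,|W_2|\}$ by showing any vertex outside the cycle could be used to extend $P$. No sampling or passage to a balanced sub-pair is involved.

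Your detour through a random subset $W_2'$ of size $|W_1|$ is harmless but unnecessary: note that the hypothesis $\delta(G[W_1,W_2])\ge 3\eps n$ already gives every $v\in W_2'$ degree $\ge 3\eps n$ into $W_1$ deterministically, so only the $W_1\to W_2'$ direction needs concentration (and \Cref{lem:che} is stated for binomials, so you would need the hypergeometric analogue). Your Step~3 then reduces to the folklore fact that a balanced super-regular pair is Hamiltonian, which is exactly what the paper proves \emph{in situ} by its longest-path argument; so in the end both proofs hinge on the same rotation idea, but the paper's version is more economical because it never leaves the original pair and never invokes probability. What your reduction buys is a cleaner target (Hamilton cycle rather than a cycle of prescribed length in an unbalanced pair), at the cost of an extra page of sampling verification.
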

				\begin{proof} 
					Let $P = u_1 \ldots u_{2\ell}$ be a longest path of even order in $H := G[W_1, W_2]$, where $u_1 \in W_1$ and $u_{2\ell} \in W_2$. Note that $\ell \ge 2\eps m$ by the minimum degree assumption. 
					
					We claim that $H$ has a cycle of length $2\ell$. 
					To see this, we will show that one of the following holds: 
					\begin{itemize} 
						\item
							$u_1 u_i$ is an edge and $u_{2\ell}, u_{i - 2}$ have a common neighbour $u \notin V(P)$, for some $i \in [2\ell]$,
						\item
							$u_i u_{2\ell}$ is an edge and $u_1, u_{i+2}$ have a common neighbour $u \notin V(P)$, for some $i \in [2\ell]$,
						\item
							$u_1 u_i$, $u_j u_{2\ell}$ and $u_{i-1} u_{j+1}$ are edges, for some $1 \le i < j \le 2\ell$,
						\item
							$u_1 u_j$, $u_i u_{2\ell}$ and $u_{i+1} u_{j+1}$ are edges, for some $1 \le i < j \le 2\ell$.
					\end{itemize}
					Note that in each of these cases, a cycle of length $2\ell$ exists. Indeed, here are the appropriate cycles: $(u_1 \ldots u_{i-2} u u_{2\ell} \ldots u_i)$, $(u_{2\ell} \ldots u_{i+2} u u_1 \ldots u_i)$, $(u_1 \ldots u_{i-1} u_{j+1} \ldots u_{2\ell} u_j \ldots u_i)$, and $(u_1 \ldots u_i u_{2\ell} \ldots u_{j+1} u_{i+1} \ldots u_j)$.

					For a set $S \subseteq V(P)$, define $S^- = \{u_{i-1} : u_i \in S\}$ and $S^+ = \{u_{i+1} : u_i \in S\}$. We now prove that one of the above four conditions holds. 
					
					Note that either $u_1$ does not have neighbours in $W_2 \setminus V(P)$ or $u_{2\ell}$ does not have neighbours in $W_1 \setminus V(P)$; without loss of generality, the latter holds. 

					Let $X$ be the set of neighbours of $u_1$ that are not in $P$, let $Y$ be the set of neighbours of $u_1$ in $P$, and let $Z$ be the set of neighbours of $u_{2\ell}$. If $|X| \ge \eps m$, then by regularity there is an edge between $X$ and $Z^{++}$, as required for the second item above. We now assume that $|X| \le \eps n$, so $|Y| \ge 2\eps n$. Let $\alpha \in [2\ell]$ be such that the sets $Y_1 = Y \cap \{u_1, \ldots, u_{\alpha}\}$ and $Y_2 = Y \cap \{u_{\alpha}, \ldots, u_{2\ell}\}$ each have size at least $\eps n$. Write $Z_1 = Z \cap \{u_1, \ldots, u_{\alpha - 1}\}$ and $Z_2 = Z \cap \{u_{\alpha + 1}, \ldots, u_{2\ell}\}$. If $|Z_1| \ge \eps n$, then by regularity there is an edge between $Z_1^-$ and $Y_2^+$, as required for the third item. Otherwise, $|Z_2| \ge \eps n$, and, again by regularity, there is an edge between $Z_2^+$ and $Y_1^+$, as required for the fourth item.

					We have established that there is a cycle $C$ of length $2\ell$; abusing notation slightly, write $C = (u_1 \ldots u_{2\ell})$.
					We now show that $\ell = \min \{|W_1|, |W_2|\}$. Suppose otherwise. 

					First, suppose that there is a vertex $v \notin V(C)$ with at least $\eps n$ neighbours outside of $C$; let $A$ be the set of these neighbours. By regularity, there is an edge between $A$ and $C$. It follows that there is a path of length $2\ell + 2$, a contradiction to the choice of $\ell$. 
					Next, we may assume that every vertex not in $C$ has at least $2\eps n$ neighbours in $C$. Let $v \in W_1 \setminus V(C)$ and $w \in W_2 \setminus V(C)$. Let $A$ and $B$ be the neighbourhoods of $v$ and $w$ in $C$, respectively. By regularity, there is an edge between $A^-$ and $B^-$; let $i, j$ be such that $u_i \in A$, $u_j \in B$, and $u_{i-1} u_{j-1}$ is an edge. Then $v u_i \ldots u_{j-1} u_{i-1} \ldots u_j w$ is a path of length $2\ell + 2$, a contradiction to the choice of $\ell$. 
				\end{proof}

				Let us now see how to deduce \Cref{lem:inner-paths} from \Cref{claim:hamilton}. Assume, without loss of generality, that $|U_1| \le |U_2|$, and write $m = |U_1|$.

				Consider the first part. For $i \in \{1, 2\}$, let $A_i\subseteq U_i$ be a set of $\ceil{\eps n}$ neighbours of $v_{3-i}$, and let $W_i$ be the set of vertices $u \in U_i \setminus A_i$ that have a neighbour in $A_{3-i}$ and have degree at least $7\eps n$ in $G[U_1, U_2]$. By regularity, $|W_i| \ge |U_i| - |A_i| - 2\eps n \ge |U_i| - 4\eps n \ge (1 - 24\eps)m$. It follows that $G[W_1, W_2]$ has minimum degree at least $3\eps n$. Thus, by \Cref{claim:hamilton}, there is a path $u_1 \ldots u_{2\ell}$ in $G[W_1, W_2]$, for any $\ell$ with $1 \le \ell \le \min\{|W_1|, |W_2|\}$; without loss of generality $u_1 \in W_1$ and $u_{2\ell} \in W_2$. By choice of $W_1, W_2$, there are vertices $x_1 \in A_1$ and $x_2 \in A_2$ such that $v_1 x_2 u_1 \ldots u_{2\ell} x_1 v_2$ is a path. It follows that there is a $v_1$-$v_2$-path of order $2k$ for every $3 \le k \le (1 - 24\eps)m$. To complete the proof of the first part, observe that, by regularity, there is an edge between $A_1$ and $A_2$, implying that there is a $v_1$-$v_2$-path of order $4$.

				Now we prove the second part. Note that it suffices to prove the existence of a $v_1$-$v_2$-path of order $2(k+1)$ with $(1 - 24\eps)m \le k \le m$. Let $U_1'$ be a subset of $U_1$ of size $k$, chosen uniformly at random among such sets. Then, with high probability, $G[U_1', U_2]$ has minimum degree at least $3\eps n$, and $U_1'$ contains at least $\eps n$ neighbours of $v_2$. It follows from \Cref{claim:hamilton} that there is a cycle $C = (u_1 \ldots u_k)$ in $G[U_1', U_2]$. Let $X$ be the set of neighbours of $v_1$ in $C$, let $Y$ be the set of neighbours of $v_1$ outside of $C$, and let $Z$ be the set of neighbours of $v_2$ in $C$. By regularity, there is an edge between $Z^+$ and $X^+$ or between $Z^{++}$ and $Y$ (where $X^+$, $Z^+$ and $Z^{++}$ are defined as in the proof of \Cref{claim:hamilton}). One can easily check that in either case, a $v_1$-$v_2$-path of order $2(k+1)$ exists.
			\end{proof}

		\subsubsection*{Proof of \Cref{prop:reducedgraph}}

			\begin{proof}
				\hfill
				\begin{enumerate}[\upshape (a)]
					\item 
						Note that $\deg_{G'}(v)\ge (c-rd-\eps)n$, so $v$ is adjacent to vertices from at least $(c-rd-\eps)n/|V_i|\ge (c-rd-\eps)m$ clusters in $G'$. By the definition of $G'$, this means that $x_i$ is adjacent to the corresponding vertices in $\G$.
					
					\item 
						We may assume $\eta+\eps<1$, as otherwise there is nothing to prove. Let $X$ be the set of vertices $x$ in $\G$ with $\deg_{\G}(x)<(c-rd-\eps)m$. By (a), we know that every vertex $v$ in the clusters corresponding to $X$ must have $\deg_G(v)<cn$. So if $|X|>(\eta+\eps) m$, then at least $|X||V_i|>(\eta+\eps)(1-\eps)n=(\eta +\eps(1-\eta-\eps))n>\eta n$ vertices $v\in V(G)$ have $\deg_G(v)<cn$, contradicting our assumption.
					
					\item 
						As $G'$ is obtained by deleting at most $(rd+\eps)n^2$ edges from $G \setminus V_0$, we know that $\bigcup_{x_i\in X} V_i$ induces at least $(c-rd-\eps)n^2$ edges in $G'$. But then there are at least $(c-rd-\eps)n^2/|V_i|^2\ge (c-rd-\eps)m^2$ pairs $\{x_i,x_j\}$ in $X$ with an edge between the corresponding clusters $V_i$ and $V_j$ in $G'$. By the definition of $G'$, this means that $X$ induces at least $(c-rd-\eps)m^2$ edges in $\G$. 
						\qedhere
				\end{enumerate}
			\end{proof}

		\subsubsection*{Proof of \Cref{lem:connecting-paths}}

			\begin{proof} 
				Let $i_1, \ldots, i_{\ell} \in [m]$ be a sequence such that $i_1 = i$, $i_2 = j$, $i_{\ell-1} = i'$ and $i_{\ell} = j'$; $x_{i_s} x_{i_{s+1}}$ is an edge of colour $c$ in $\G$ for $s \in [\ell-1]$; and $5 \le \ell \le m+2$ (it is easy to see that such a sequence exists).

				We claim that there exists a path $v_1 \ldots v_{\ell}$ such that $v_s \in V_{i_s} \setminus W$ for $s \in [\ell]$; $v_1 = v$ and $v_{\ell} = w$; and $v_s$ has typical degree in $V_{i_{s+1}}$ for $s \in [\ell - 3]$. Indeed, suppose that $v_1 \ldots v_{s-1}$ is a path with the required properties, for $2 \le s \le \ell - 2$. If $s \le \ell - 3$, as $v_{s-1}$ has typical degree in $V_{i_{s}}$, there is a neighbour $v_s \in V_{i_s} \setminus (W \cup \{v_1, \ldots, v_{s-1}, v_{\ell}\})$ of $v_{s-1}$ with typical degree in $V_{i_{s+1}}$, using \eqref{equ:typical-degree-in-reg-pair}. If $s = \ell - 2$, as $v_{\ell-3}$ and $v_{\ell}$ have typical degrees in $V_{i_{\ell-2}}$ and $V_{i_{\ell-1}}$, respectively, and by regularity, there are vertices $v_{\ell-2}$ and $v_{\ell-1}$ in $V_{i_{\ell-2}} \setminus (W \cup \{v_1, \ldots, v_{\ell-3}, v_{\ell}\})$ and $V_{i_{\ell-1}} \setminus (W \cup \{v_1, \ldots, v_{\ell-3}, v_{\ell}\})$, respectively, such that $v_{\ell-3} v_{\ell-2} v_{\ell-1} v_{\ell}$ is a path. The existence of the required path follows, proving \Cref{lem:connecting-paths}.
			\end{proof}
			
		\subsubsection*{Proof of \Cref{lem:EGP}}

			\begin{proof}

				An important ingredient is the following classic result of P\'osa (see~\cite{Lov79}).
			
				\begin{thm}[P\'osa] \label{thm:posa}
					The vertices of any graph $G$ can be covered with at most $\alpha(G)$ vertex-disjoint cycles.
				\end{thm}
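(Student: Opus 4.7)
We propose to prove this classical result of P\'osa by induction on $n = |V(G)|$, the case $n = 0$ being trivial. The inductive step reduces to finding a \emph{generalized cycle} $C$ in $G$ (i.e.\ a singleton, an edge, or a proper cycle of length at least $3$) satisfying
\[
	\alpha(G \sm V(C)) \le \alpha(G) - 1,
\]
since then applying the inductive hypothesis to $G \sm V(C)$ yields a cover by at most $\alpha(G) - 1$ vertex-disjoint cycles, and adjoining $C$ produces the desired cover of $G$ by at most $\alpha(G)$ cycles.

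The plan is to search for such $C$ in three stages of increasing complexity. First, if some vertex $v$ lies in every maximum independent set of $G$, take $C = \{v\}$, since then $\alpha(G - v) = \alpha(G) - 1$. Otherwise every vertex of $G$ is \emph{avoidable} by some maximum independent set; in that case, we next look for an edge $\{u,v\}$ meeting every maximum independent set (equivalently, each maximum independent set contains exactly one of $u, v$), and take $C = \{u, v\}$. Failing both, every vertex and every edge of $G$ is avoidable, and we must exhibit a proper cycle whose vertex set meets every maximum independent set.

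The main obstacle lies in this last case. The plan is to take a longest cycle $C^*$ of $G$ and show that $V(C^*)$ must hit every maximum independent set. Suppose for contradiction that some maximum independent set $I$ is disjoint from $V(C^*)$. Then $|I| = \alpha(G)$ forces every vertex of $V(C^*)$ to have at least one neighbour in $I$ (else $I$ could be strictly enlarged by such a vertex), and the maximality of $C^*$ forbids any vertex of $I$ from being adjacent to two consecutive vertices of $C^*$ (else we could insert it to produce a longer cycle). A P\'osa-style rotation/extension argument exploiting the bipartite structure between $V(C^*)$ and $I$ should then yield either a cycle in $G$ longer than $C^*$ or an independent set of size larger than $|I|$, each contradicting our hypotheses. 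Making the rotation argument precise is the delicate technical step, and the full details of this classical theorem can be found in \cite{Lov79}.
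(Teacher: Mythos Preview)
The paper does not actually prove this theorem: it is stated as a classical result of P\'osa and cited to \cite{Lov79}, then used as a black box inside the proof of \Cref{lem:EGP}. So there is no ``paper's own proof'' to compare against beyond the citation, which you also invoke at the end of your sketch.

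That said, your sketch takes a detour that is both more complicated than necessary and genuinely incomplete. Your three-case analysis culminates in the assertion that a \emph{longest cycle} $C^*$ must meet every maximum independent set, to be established by an unspecified ``P\'osa-style rotation/extension argument''. You yourself flag this as ``the delicate technical step'' and do not carry it out. Even the setup is shaky: you never verify that a proper cycle exists once cases~1 and~2 fail (it does, because a leaf edge in any forest is unavoidable, but this needs to be said), and the rotation argument for longest \emph{cycles} interacting with an external independent set is not standard and not obviously salvageable in a few lines.

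The actual proof (as in \cite{Lov79}) bypasses all of this by working with a longest \emph{path} rather than a longest cycle, and requires no case analysis on independent sets. Take a longest path $P=v_0v_1\ldots v_k$; then $N(v_0)\subseteq V(P)$. If $v_0$ is isolated set $C=\{v_0\}$; otherwise let $j=\max\{i:v_0v_i\in E(G)\}$ and set $C=v_0v_1\ldots v_j v_0$ (an edge if $j=1$). Crucially, $N(v_0)\subseteq V(C)$, so for every independent set $I$ in $G\sm V(C)$ the set $I\cup\{v_0\}$ is independent in $G$, giving $\alpha(G\sm V(C))\le \alpha(G)-1$ directly. Induction finishes. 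This single observation replaces your entire three-stage search and the unresolved rotation argument.
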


				For $x \in B$ let $N_i(x)$ denote the set of vertices in $A$ adjacent to $x$ in colour~$i$. 
				Let $\{B_1, B_2, \dots ,B_r\}$ be a partition of $B$ such that for
				any $1 \le i \le r$ and $x \in B_i$, we have $|N_i(x)| \ge |A| /
				(100r)$ (clearly, such a partition exists).  
				Define a graph $G_i$
				on vertex set $B_i$ for every $i \in [r]$ as follows.  For $x,y \in B_i$, let $xy$
				be an edge of $G_i$ if and only if $|N_i(x) \cap N_i(y)| \ge |A| / (100^3r^3)$.
				We claim that the independence number of $G_i$ is at most $100r$ for $i
				\in [r]$.
				
				Indeed, suppose otherwise, and let $x_1, \dots, x_{100r+1} \in B_i$
				be pairwise non-adjacent. Then 
				\begin{align*}
					|A| & \ge 
					\left| \bigcup_{1 \le j \le 100r+1} N_i(x_j) \,\right| \\
					&\ge \sum_{1 \le j \le 100r+1}|N_i(x_j)|
						- \sum_{1\le j < k \le 100r+1} |N_i(x_j)\cap N_i(x_k)| \\
					&\ge |A| \left( \frac{100r+1}{100r} - \frac{\binom{100r+1}{2}}{100^3 r^3} \right) > |A|,
				\end{align*}
				a contradiction.
				
				So $\alpha(G_i)\le 100r$, and thus by \Cref{thm:posa} $G_i$ can be partitioned into a family $\C_i$ of at most $100r$ vertex-disjoint cycles. Using the definition of $G_i$, we can then greedily replace the edges $xy$ in each $\C_i$ with $i$-coloured paths $xwy$ (where $w\in N_i(x) \cap N_i(y)\subset A$), where each edge uses a distinct vertex $w$, to find at most $100r^2$ monochromatic vertex-disjoint proper cycles and edges in $G$ that cover $B$.
				Additionally, for every singleton $\{x\}$ in $\C_i$ we replace it by an edge $xw$ (where $w \in N_i(x)$), so that each singleton uses a distinct vertex $w$, which is also distinct from the vertices chosen previously.
			\end{proof}

		\subsubsection*{Proof of \Cref{prop:sample}}
		
			\begin{proof}
				Let $A$ be a random subset of $V\sm B$ where every vertex is included in $A$ independently with probability $p$. We will show that the event that $A$ satisfies all of the properties has positive probability.
				\begin{enumerate}[\upshape (a)]
					\item
						Note that $|A|\sim \Bin(|V\sm B|,p)$, and by assumption, $|B|<10pn+\eps n < n/3$. Hence, by \Cref{lem:che}, 
						\[ \Pr\left[ |A|<\frac{p}{2}n \right] \le \Pr\left[ |A|<\frac{3}{4}|V\sm B|p \right] < e^{-|V\sm B|p/32} \le e^{-np/48}.  \]
					\item
						Again, for any $i\in [m]$ fixed, $|A\cap V_i|\sim \Bin(V_i\sm B,p)$ so by \Cref{lem:che} and $|V_i\sm B|\ge  ({9}/{10})|V_i|> {n}/{(2m)}$,
						\[ \Pr\left[ |A\cap V_i| > 2p|V_i| \right] < \Pr\left[ |A\cap V_i| > 2p|V_i\sm B| \right] < e^{-|V_i\sm B|p/3} < e^{-np/(10m)} .\]
					\item
						Let $v \in V$ and $i\in[m]$ be such that $\deg(v,V_i)>30p|V_i|$.
						Here $\deg(v,A\cap V_i)\sim \Bin(\deg(v,V_i\sm B),p)$, and $\deg(v,V_i\sm B) \ge \deg(v,V_i) - |B\cap V_i| >  ({2}/{3})\deg(v,V_i)$.
						We can then apply \Cref{lem:che} to get
						\[ \Pr\left[  \deg(v,A\cap V_i)< \frac{p}{2}\deg(v,V_i) \right] < e^{-\deg(v,V_i)p/48} < e^{-|V_i|p^2/2} < e^{-np^2/(4m)} .   \]
					\item 
						Let $v\in V$ with $\deg(v,V\sm B)> {n}/{40}$.
						Like in the previous argument, $\deg(v,A)\sim \Bin(\deg(v,V\sm B),p)$, so by \Cref{lem:che} and as $\deg(v,V\sm B)>n/40$, 
						\[ \Pr\left[  \deg(v,A) <\frac{pn}{50}\right] \le \Pr\left[  \deg(v,A)< \frac{4p}{5}\deg(v,V\sm B) \right] < e^{-\deg(v,V\sm B)p/50} \le e^{-np/2000}.   \]
						But the argument for Property \ref{itm:sample-b} shows that $\Pr\left[ |A| >2pn \right] < m e^{-np/(10m)}$, so
						\[ \Pr\left[  \deg(v,A) < \frac{|A|}{100} \right] <  e^{-np/2000}+me^{-np/(10m)}. \]
				\end{enumerate}
				Now taking a union bound over all choices of $i\in [m]$ in (b) and (c) and all choices of $v\in V$ in (c) and (d), we get that $A$ satisfies all properties with probability at least $1-n^2e^{-np^2/(4m)}>0$. (Here we use that $p> {m\log n}/{\sqrt{n}}$ and in particular $m \ll n$). In particular, there is such a set $A$.
			\end{proof}

\end{document}